\definecolor{Bittersweet}{HTML}{C04F17}
\definecolor{Darkgreen}{rgb}{0,0.4,0}
\def\Hess{{\rm Hess}}
\renewcommand{\@fnsymbol}[1]{\ifcase#1\or a\or b\or c \else\@ctrerr\fi}
\newtheorem{theorem}{Theorem}
\newtheorem{remark}[theorem]{Remark}
\newtheorem{lemma}[theorem]{Lemma}
\newtheorem{proposition}[theorem]{Proposition}
\newtheorem{corollary}[theorem]{Corollary}
\newtheorem{definition}[theorem]{Definition}
\newtheorem{example}[theorem]{Example}
\newcommand{\mytableofcontents}{%
  \begingroup
    \let\clearpage\relax  
    \section*{Contents}    
    \@starttoc{toc}        
  \endgroup
}
\begin{document}

\title{Symmetric Persistent Tensors and their Hessian}

\author{Masoud Gharahi} 
\email[]{masoud.gharahi@gmail.com}
\affiliation{Faculty of Physics, Astronomy and Applied Computer Science, Jagiellonian University, 30-348 Kraków, Poland}
\author{Giorgio Ottaviani}
\email[]{giorgio.ottaviani@unifi.it}
\affiliation{Department of Mathematics and Computer Science Ulisse Dini, University of Florence, 50134 Florence, Italy}

\begin{abstract}
Persistent tensors, introduced in [\href{https://doi.org/10.22331/q-2024-01-31-1238}{Quantum 8 (2024), 1238}], and inspired by quantum information theory, form a recursively defined class of tensors that remain stable under the substitution method and thereby yield nontrivial lower bounds on tensor rank. In this work, we investigate the symmetric case—namely, symmetric persistent tensors, or equivalently, persistent polynomials. We establish that a symmetric tensor in $\mathrm{Sym}^n \mathbbm{C}^d$ is persistent if the determinant of its Hessian equals the $d(n-2)$-th power of a nonzero linear form. The converse is verified for cubic tensors ($n=3$) or for $d \leq 3$, by leveraging classical results of B. Segre. Moreover, we demonstrate that the Hessian of a symmetric persistent tensor factors as the $d$-th power of a form of degree $(n-2)$. Our main results provide an explicit necessary and sufficient criterion for persistence, thereby offering an effective algebraic characterization of this class of tensors. Beyond characterization, we present normal forms in small dimensions, place persistent polynomials within prehomogeneous geometry, and connect them with semi-invariants, homaloidal polynomials, and Legendre transforms. Particularly, we prove that all persistent cubics are homaloidal.
\end{abstract}

\maketitle

\noindent\textbf{2020 Mathematics Subject Classification:} 
Primary - 14J70; Secondary - 14J81, 15A72, 14L35, 81P40. \\


\mytableofcontents
\section{Introduction and Statements of the Main Results}\label{sec.i}

Persistent tensors were introduced in \cite{GL24} to formalize a notion of stability under contraction. In the simplest case of two factors, persistence coincides with conciseness, meaning that no factor is redundant in the tensor. For tensors with three or more factors, the definition becomes recursive: a tensor is persistent if it is concise and if, after contracting along any direction outside a linear subspace in one of the factors, the resulting smaller tensor remains persistent. In this sense, persistence ensures that the essential structure of a tensor survives successive contractions, never degenerating into a trivial form.

In this work, we focus on the symmetric case, where tensors correspond to homogeneous polynomials. Symmetric persistent tensors form a natural and distinguished subclass with particularly stable structural behavior. In degree two, persistence recovers the classical notion of a nonsingular quadratic form, i.e., one of maximal rank. For higher degrees, persistent polynomials are those that are both concise (involving no redundant variables) and whose directional derivatives are themselves persistent for almost every direction. Equivalently, persistence can be viewed as the property that successive differentiation preserves the full structural richness of the polynomial.

From a classical algebraic viewpoint, non-concise polynomials correspond to cones, while persistent polynomials lie at the opposite extreme: their derivatives maintain maximal independence and avoid collapsing into lower-dimensional degeneracies. In this way, persistent polynomials provide a natural generalization of quadratic forms of maximal rank (nonsingular quadratic forms) to higher degrees. They define a canonical family of homogeneous polynomials with maximal stability under differentiation, offering a bridge between modern concepts from quantum information theory and classical results in algebraic geometry. In fact, as shown in \cite{GL24}, families of minimal-rank persistent tensors can be regarded as natural generalizations of multiqubit $\mathsf{W}$ states \cite{DVC} to the multiqudit setting, retaining an analogous entanglement robustness under particle loss: taking derivatives with respect to any variable preserves linear independence, mirroring the way $\mathsf{W}$ states remain entangled when any qubit is traced out. Equivalently, they lie in the orbit closure of the generalized $\mathsf{GHZ}$ state, a canonical entanglement structure of central importance in quantum information science.

The key phenomenon under investigation is the strong link between persistence and the Hessian. We establish that a symmetric tensor is persistent if its Hessian admits a highly constrained factorization, namely as a power of a linear form. This connection yields both an effective algebraic criterion for persistence and, in special cases, a complete classification. Beyond the characterization theorem, the study extends to small-dimensional instances, recovering classical results of B.~Segre and providing explicit normal forms. Further, the work places persistent polynomials within the broader context of prehomogeneous vector spaces, revealing connections to semi-invariants, homaloidal polynomials, and Legendre transforms.

We may identify, as usual, symmetric tensors with homogeneous polynomials (see Sec. \ref{subsec:polrest} for details). 
We state the recursive definition of persistent in the setting of homogeneous polynomials. For further details see Sec. \ref{subsec:persistent}.
\begin{definition}\label{def:persistent-symtensor_intro}
Let $V$ be a finite dimensional vector space.
\begin{itemize}
\item[(i)] A homogeneous polynomial $f\in \mathrm{Sym}^2V$ is persistent if it is a quadratic form of maximal rank, or nonsingular, or concise.
\item[(ii)] A homogeneous polynomial $f\in \mathrm{Sym}^nV$ with $n > 2$ is persistent if it is concise and there exists a hyperplane $S \subsetneq V^\vee$ such that the first derivative $\frac{\partial f}{\partial u}\in \mathrm{Sym}^{n-1}V$ is persistent whenever $u\notin S$. 
\end{itemize}
\end{definition}
The main point of this paper is a strong connection between persistent homogeneous polynomials and their Hessian. The Hessian $\Hess(f)$ of a homogeneous polynomial $f(x_0,\ldots, x_{d-1})$ of degree $n$ in $d$ variables is the determinant of its Hessian matrix (see Sec. \ref{subsec:hessian} for details). It is a homogeneous polynomial of degree $d(n-2)$ in the same $d$ variables.  

We recall that it is still an open problem to describe the polynomials $f$ such that $\Hess(f)$ vanishes identically \cite{Ru}. The locus of such polynomials is $\mathrm{SL}(d)$-invariant, and it is closed.

The locus of polynomials \(f\) for which the Hessian satisfies $\Hess(f)=\lambda \ell^{d(n-2)}$, for some scalar $\lambda\in\mathbbm{C}$ and linear form $\ell$, is again 
$\mathrm{SL}(d)$-invariant. It is closed upon allowing $\lambda=0$, which corresponds to the previously considered case.

Our main result is the following chain of implications.

\begin{theorem}\label{thm:main}
Let $f(x_0,\ldots, x_{d-1})\in\mathrm{Sym}^n\mathbbm{C}^d$. Consider the following conditions:
\begin{enumerate}[label=(\alph*), font=\normalfont]
\item There exists a nonzero linear homogeneous polynomial $\ell$ such that
\begin{equation}
\Hess(f)=\ell^{d(n-2)}\,.
\end{equation}
\item $f$ is persistent.
\item For $(n-2)$ sets of coordinates $u^{(1)},\ldots,u^{(n-2)} \in \mathbbm{C}^d$, one has
\begin{equation}\label{eq:hessx}
\Hess_x\Bigg(\sum_{i_1=0}^{d-1}\cdots\sum_{i_{n-2}=0}^{d-1} u^{(1)}_{i_1}\cdots u^{(n-2)}_{i_{n-2}}
\frac{\partial^{n-2}f(x)}{\partial x_{i_1}\cdots\partial {x_{i_{n-2}}}}\Bigg) = \left[g\big(u^{(1)},\ldots, u^{(n-2)}\big)\right]^d\,,
\end{equation}
where the Hessian is taken with respect to $x=(x_0,\ldots,x_{d-1})$, and $g$ is a nonzero multi-homogeneous polynomial of multidegree $(1,\ldots,1)$  in the sets of variables $u^{(1)},\ldots,u^{(n-2)}$.
\item There exists a nonzero homogeneous polynomial $g$ of degree $(n-2)$ such that
\begin{equation}
\Hess(f)=g^{d}\,.
\end{equation}
\end{enumerate}
Then the following implications hold:
\[
(\textnormal{a}) \;\Longrightarrow\; (\textnormal{b})
\;\Longleftrightarrow\; (\textnormal{c}) \;\Longrightarrow\; (\textnormal{d})\,.
\]
\end{theorem}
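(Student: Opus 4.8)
The plan is to route everything through the quadratic form $Q_u(x)=\sum u^{(1)}_{i_1}\cdots u^{(n-2)}_{i_{n-2}}\,\partial^{\,n-2}f/\partial x_{i_1}\cdots\partial x_{i_{n-2}}$ from condition (c), and to read persistence off the stepwise differentiation that produces it. The first thing I would record is an Euler/polarization identity: since each $\partial^2 f/\partial x_j\partial x_k$ is homogeneous of degree $n-2$, setting all $u^{(l)}=x$ and applying Euler's relation $(n-2)$ times gives $\partial^2 Q_x/\partial x_j\partial x_k=(n-2)!\,\partial^2 f/\partial x_j\partial x_k$, hence $\Hess_x(Q_u)\big|_{u^{(l)}=x}=(n-2)!^{\,d}\,\Hess(f)$. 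I would also note that $Q_u$ is symmetric in the blocks $u^{(1)},\dots,u^{(n-2)}$, so $\Hess_x(Q_u)=g^d$ forces $g^d$ to be symmetric; the only alternative to $g$ being symmetric is that $g$ is alternating, but then $g$ would vanish on a diagonal $u^{(i)}=u^{(j)}$ where $\Hess_x(Q_u)$ is generically nonzero, a contradiction. Restricting (c) to the diagonal then gives $(n-2)!^{\,d}\Hess(f)=g(x,\dots,x)^d$, and since a nonzero symmetric multilinear form has nonzero diagonal restriction, $G:=g(x,\dots,x)/(n-2)!$ is a nonzero form of degree $n-2$ with $\Hess(f)=G^d$. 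This proves (c)$\Rightarrow$(d).

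For (b)$\Leftrightarrow$(c) I would induct on $n$, the base case $n=2$ being the tautology that a quadratic form is persistent iff $\Hess(f)$ is a nonzero constant, which is exactly (c) with no $u$-blocks. The inductive engine is the identity $Q_{u^{(1)},v^{(1)},\dots,v^{(n-3)}}=Q'_{v}$, where $Q'$ is the analogous quadratic form for the degree-$(n-1)$ derivative $\partial_{u^{(1)}}f$: fixing the first block is the same as differentiating once. For (c)$\Rightarrow$(b), condition (c) gives $\Hess(f)=G^d\neq0$ by the previous paragraph, so $f$ is concise (a non-concise $f$ has a zero row in its Hessian matrix, forcing $\Hess(f)\equiv0$); and for every $u^{(1)}$ off the subspace where $g(u^{(1)},\cdot)$ vanishes identically—a set contained in a hyperplane $S$—the polynomial $g(u^{(1)},v^{(1)},\dots,v^{(n-3)})$ is a nonzero multilinear $d$-th root of $\Hess_x(Q'_v)$, so $\partial_{u^{(1)}}f$ satisfies (c) at degree $n-1$ and is persistent by induction, whence $f$ is persistent. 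For (b)$\Rightarrow$(c), persistence supplies a hyperplane $S_1$ with $\partial_{u^{(1)}}f$ persistent for $u^{(1)}\notin S_1$, so by induction $\Hess_x(Q'_v)=g'_{u^{(1)}}(v)^d$ with $g'_{u^{(1)}}$ multilinear; thus $P:=\Hess_x(Q_u)$ is, for each fixed generic $u^{(1)}$, a perfect $d$-th power in the remaining blocks. To upgrade this fiberwise statement to a global factorization $P=g^d$ with $g$ of multidegree $(1,\dots,1)$, I would factor $P$ into irreducibles in the polynomial UFD and argue that a generic specialization of one block preserves multiplicities, so every exponent is divisible by $d$; symmetry in the blocks together with the multidegree $(d,\dots,d)$ of $P$ then forces $g$ to be multilinear.

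For (a)$\Rightarrow$(b) I would instead prove the stronger (a)$\Rightarrow$(c), the point being that condition (a) is inherited by generic first derivatives with the \emph{same} linear form. Concretely I would establish
\begin{equation}
\Hess(\partial_v f)=\kappa\,(\partial_v\ell)^{d}\,\ell^{\,d(n-3)}\qquad\text{whenever }\Hess(f)=\ell^{\,d(n-2)},
\end{equation}
for a nonzero constant $\kappa=\kappa(n,d)$. Granting this, for generic $v$ the derivative $\partial_v f$ again satisfies (a) at degree $n-1$ with the same $\ell$, so iterating in directions $u^{(1)},\dots,u^{(n-2)}$ collapses the exponent of $\ell$ to zero and yields $\Hess_x(Q_u)=\kappa'\prod_{i=1}^{n-2}\ell(u^{(i)})^{d}=\big(\kappa'^{1/d}\prod_i\ell(u^{(i)})\big)^{d}$, which is precisely (c) with the explicit multilinear root $g=\mathrm{const}\cdot\prod_i\ell(u^{(i)})$ (here $\partial_v\ell=\ell(v)$ since $\ell$ is linear). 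Combined with (c)$\Rightarrow$(b) this gives (a)$\Rightarrow$(b).

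The main obstacle is the displayed identity. Its shape is forced: writing $A=H_f$ for the Hessian matrix, Jacobi's formula applied to $\det A(x+tv)=(\ell(x)+t\,\ell(v))^{d(n-2)}$ pins down all mixed traces of $A$ and its derivatives, but isolating $\det(\partial_v A)=\Hess(\partial_v f)$ needs more than the trace data. I would reduce to $\ell=x_0$ using the $\GL_d$-equivariance of both sides, so the hypothesis becomes $\det A=x_0^{d(n-2)}$ and the claim becomes that the pencil $\sum_k w_k\,\partial_{x_k}A$ has determinant $\kappa\,w_0^{\,d}\,x_0^{d(n-3)}$. The remaining content is a structural analysis of a symmetric matrix of forms whose determinant is a pure power of $x_0$: controlling its generic corank along $x_0=0$ and the vanishing orders of its minors—equivalently the poles of $\mathrm{adj}(A)/x_0^{d(n-2)}=A^{-1}$—should force $\partial_{x_k}A$ for $k\ge1$ to act nilpotently relative to $\partial_{x_0}A$, annihilating every monomial in $w$ other than $w_0^{d}$. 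A clean sufficient input is the normal form $f=\ell^{\,n-2}q$ with $q$ a nonsingular quadratic, which visibly satisfies (a) and for which the identity is a direct computation, since $\partial_v(\ell^{n-2}q)=\ell^{n-3}\big((n-2)\ell(v)\,q+\ell\,\partial_v q\big)$ has the same shape at degree $n-1$; proving that (a) forces this normal form, or bypassing it by the determinantal analysis above, is the crux and is exactly where I expect the real work to lie.
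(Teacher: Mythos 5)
Your treatment of $(\textnormal{b})\Leftrightarrow(\textnormal{c})\Rightarrow(\textnormal{d})$ follows essentially the paper's route: induction on $n$ via ``fixing the first block equals differentiating once,'' conciseness read off from the multihomogeneous Hessian, and restitution for $(\textnormal{c})\Rightarrow(\textnormal{d})$. The one step you compress is exactly where the paper needs a real lemma: upgrading ``for generic $u^{(1)}$ the polynomial is a $d$-th power of a multilinear form in the remaining blocks'' to a global factorization $P=g^{d}$. Your phrase ``a generic specialization of one block preserves multiplicities'' is the content of the paper's Lemma \ref{lem:multd}, which excludes an irreducible factor of degree $\ge 2$ in some block via generic reducedness of fibers (\cite[Lemma III.10.5]{Hart}) and then runs a case analysis on the support of the multidegree of each irreducible factor. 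The conclusion ``every exponent is divisible by $d$'' does not drop out of specialization alone: you must also exclude, e.g., $P=G_1^{\,d-1}G_2$ with $G_1,G_2$ distinct multilinear factors whose specializations are always proportional, which is precisely what that case analysis does. So this half is the same approach, with a genuine lemma left as a sketch.

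The genuine gap is $(\textnormal{a})\Rightarrow(\textnormal{b})$. Your argument rests entirely on the unproved identity $\Hess(\partial_v f)=\kappa\,\ell(v)^{d}\,\ell^{\,d(n-3)}$ whenever $\Hess(f)=\ell^{\,d(n-2)}$, and you acknowledge it is ``the crux.'' As written, the implication is asserted, not proved: the identity is automatic only for $n=3$ (where $\mathcal{H}_f$ is linear in $x$, so $\det(\partial_v\mathcal{H}_f)=\Hess(f)(v)$), and your proposed analysis of a symmetric matrix of forms whose determinant is a pure power of $x_0$ is a nontrivial project rather than a verification. The paper takes a genuinely different route that avoids derivatives: Proposition \ref{prop:aaprime} upgrades $(\textnormal{a})$ to the explicit statement $(\textnormal{a}')$ that the multihomogeneous Hessian equals $\prod_i\ell(u^{(i)})^{d}$, by viewing the symmetric multidegree-$(d,\ldots,d)$ polynomial $G$ as a symmetric tensor of order $n-2$ and arguing from its restitution $\ell^{d(n-2)}$ that it has rank one. (Be aware, if you try to adopt that argument, that it hinges on recovering a block-symmetric multidegree-$(d,\ldots,d)$ form from its restitution, and the paper's own Example \ref{exa:hessbinary} shows this restitution map is not injective in general, so that step also deserves care.) In short: your $(\textnormal{b})\Leftrightarrow(\textnormal{c})\Rightarrow(\textnormal{d})$ is sound modulo writing out the factorization lemma, but your $(\textnormal{a})\Rightarrow(\textnormal{b})$ is incomplete at its central step.
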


In the left-hand side (LHS) of (\ref{eq:hessx}), the polynomial $\frac{\partial^{n-2}f(x)}{\partial x_{i_1}\cdots\partial x_{i_{n-2}}}$ is of degree $2$; hence, the Hessian does not depend on $x$. Note that the equivalence between $(\textnormal{b})$ and $(\textnormal{c})$ provides an effective algorithm to detect whether a polynomial is persistent. At the same time, $(\textnormal{a})$ gives a sufficient condition, and $(\textnormal{d})$ gives a necessary condition. While $(\textnormal{d})$ is certainly not sufficient (see Example \ref{exa:hessbinary} (i)), it is currently unknown whether $(\textnormal{a})$ is a necessary condition. This holds for $n=3$ (see Corollary \ref{coro:n=3}) and for $d\leq 4$ (see Theorem \ref{thm:small_dim}).

Some examples are useful to illustrate the statement of Theorem \ref{thm:main}.

\begin{example}\label{exa:hessbinary}
We illustrate the statement of Theorem~\ref{thm:main} with three explicit examples. In each case, persistence could be verified directly ``by hand,'' but our aim is to demonstrate 
how Theorem \ref{thm:main} can be applied.
\begin{itemize}
\item[(i)]
Consider $h=x_0^4+x_1^4\in\mathrm{Sym}^4\mathbbm{C}^2$. For notational convenience, we set $u^{(1)} = u$ and $u^{(2)} = v$. 
The LHS of~\eqref{eq:hessx} becomes
\[
\Hess_x\big(12(u_0v_0x_0^2+u_1v_1x_1^2)\big)=12^2\det\begin{pmatrix}2u_0v_0&0\\0&2u_1v_1\end{pmatrix}=24^2u_0u_1v_0v_1\,.
\]
Since this expression is not a square, $h$ is not persistent, 
in view of the equivalence $(\textnormal{b}) \Longleftrightarrow (\textnormal{c})$ in Theorem~\ref{thm:main}. However, the restitution (substituting $u_0=v_0=x_0$ and $u_1=v_1=x_1$) yields $24^2(x_0x_1)^2=4\,\Hess(h)$, which is now a square.

\item[(ii)] Next, consider $g=x_0^2x_1^2\in\mathrm{Sym}^4\mathbbm{C}^2$. In this case, the LHS of~\eqref{eq:hessx} is
\begin{align*}
\Hess_x\big(2u_0v_0x_1^2+4(u_0v_1+u_1v_0)x_0x_1+2u_1v_1x_0^2\big)&=\det\begin{pmatrix}4u_1v_1&4(u_0v_1+u_1v_0)\\
4(u_0v_1+u_1v_0)&4u_0v_0\end{pmatrix} \\
&=4^2\big(u_0u_1v_0v_1-(u_0v_1+u_1v_0)^2\big) \\
&=4^2\big(-u_0^2v_1^2-u_1^2v_0^2-u_0u_1v_0v_1\big).
\end{align*}
As in case~(i), the expression is not a square, and therefore $g$ is not persistent. The restitution gives $-48(x_0x_1)^2=4\,\Hess(g)$, which is a scalar multiple of $\Hess(h)$. Note, however, that the expressions in $(u,v)$ arising from $h$ and $g$ in~\eqref{eq:hessx} differ; they ``remind'' indeed the determinantal representations 
from which the Hessian $(x_0 x_1)^2$ was obtained. At first sight, this appears to contradict the bijectivity of polarization and restitution (see Section~\ref{subsec:polrest}). The resolution lies in the fact that our Hessians are quartic polynomials: a complete polarization would require four variables $u^{(1)}, \ldots, u^{(4)}$, whereas we have restricted to just $u^{(1)}$ and $u^{(2)}$. Thus, the expression on the LHS of~\eqref{eq:hessx} 
should be interpreted as a partial polarization. This partiality provides sufficient flexibility to express persistence, and explains why condition~$(\textnormal{c})$ in Theorem~\ref{thm:main} remains valid.

\item[(iii)] Finally, consider the persistent example $f=x_0^3x_1\in\mathrm{Sym}^4\mathbbm{C}^2$. In this case, the LHS of (\ref{eq:hessx}) is
\[
\Hess_x\big(6u_0v_0x_0x_1+3(u_0v_1+u_1v_0)x_0^2\big)=\det\begin{pmatrix}3(u_0v_1+u_1v_0)&6u_0v_0\\
6u_0v_0&0\end{pmatrix}=-36(u_0v_0)^2\,,
\]
which is indeed a square. Consequently, $f$ satisfies condition~$(\textnormal{c})$ in Theorem~\ref{thm:main}, 
which certifies its persistence. Moreover, the restitution yields $-36x_0^4=4\,\Hess(f)$, which even satisfies condition~$(\textnormal{a})$ in Theorem~\ref{thm:main}.
\end{itemize}
Less trivial examples are in Example \ref{exa:SolT}.
\end{example}

\begin{corollary}\label{coro:n=3}
Let $f\in\text{Sym}^3\mathbbm{C}^d$.
The following conditions are equivalent
\begin{itemize}
\item[(a)] There exists a nonzero linear homogeneous polynomial $\ell$ such that $\Hess(f)=\ell^{d}$.
\item
[(b)] $f$ is persistent
\end{itemize}
\end{corollary}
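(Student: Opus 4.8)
The plan is to read the corollary off Theorem~\ref{thm:main} directly, exploiting the fact that for $n=3$ the two endpoints of the implication chain—conditions $(\mathrm a)$ and $(\mathrm d)$—literally coincide. Theorem~\ref{thm:main} already supplies the implication $(\mathrm a)\Rightarrow(\mathrm b)$ for every $n$, so half of the equivalence asserted in the corollary is free; all that remains is to establish the reverse implication $(\mathrm b)\Rightarrow(\mathrm a)$.

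For that reverse direction I would specialize condition $(\mathrm d)$ of Theorem~\ref{thm:main} to $n=3$. In general, $(\mathrm d)$ reads $\Hess(f)=g^{d}$ for a nonzero homogeneous polynomial $g$ of degree $n-2$. When $n=3$ this degree equals $1$, so $g$ is itself a nonzero linear form. Hence $(\mathrm d)$ becomes $\Hess(f)=\ell^{d}$ with $\ell:=g$ linear, which is verbatim condition $(\mathrm a)$ of the corollary (note $d(n-2)=d$ here). In other words, for cubics the necessary condition $(\mathrm d)$ collapses onto the sufficient condition $(\mathrm a)$.

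Chaining these observations closes the loop. Theorem~\ref{thm:main} gives $(\mathrm b)\Longleftrightarrow(\mathrm c)\Longrightarrow(\mathrm d)$, and the degree count above identifies $(\mathrm d)$ with $(\mathrm a)$ at $n=3$; therefore $(\mathrm b)\Rightarrow(\mathrm a)$. Together with the free implication $(\mathrm a)\Rightarrow(\mathrm b)$, this yields the equivalence $(\mathrm a)\Longleftrightarrow(\mathrm b)$.

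I do not expect a genuine obstacle: the entire substance lives in Theorem~\ref{thm:main}, and the corollary is a pure specialization. The single point to verify, and it is immediate, is the degree bookkeeping—that the factor $g$ in $(\mathrm d)$ has degree exactly $n-2$, which forces linearity precisely (and only) when $n=3$. For $n\ge 4$ this collapse fails, which is exactly why the equivalence is claimed here only for cubics.
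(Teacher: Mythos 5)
Your proposal is correct and follows exactly the paper's own argument: the paper's one-line proof observes that for $n=3$ conditions $(\textnormal{a})$ and $(\textnormal{d})$ of Theorem~\ref{thm:main} coincide (since $g$ has degree $n-2=1$), and then the equivalence follows from the implication chain $(\textnormal{a})\Rightarrow(\textnormal{b})\Leftrightarrow(\textnormal{c})\Rightarrow(\textnormal{d})$. Your write-up simply spells out the same degree bookkeeping in more detail.
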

\begin{proof}
For $n=3$, conditions $(\textnormal{a})$ and $(\textnormal{d})$ in Theorem \ref{thm:main} are equivalent.
\end{proof}

In order to state our next result, we recall that the weight of a monomial
${\bf{x}}^{\alpha}=x_0^{\alpha_0}\cdots x_{d-1}^{\alpha_{d-1}}$ is $\sum i\alpha_i$, which amounts to declare that the variable $x_i$ has weight $i$.
The space generated by monomials of weight $j$ are the invariant subspaces in $\text{Sym}^n\mathbbm{C}^d$ for the action of the torus $\mathbbm{C}^*$ acting as $t\cdot x_i=t^ix_i$. Recall that all actions of $\mathbbm{C}^*$ are conjugated to a diagonal one where $t\cdot x_i=t^{\omega_i}x_i$ for some weight $\omega_i\in{\mathbbm Z}$, extending it as $t\cdot {\bf{x}}^{\alpha}=x_0^{\omega_0\alpha_0}\cdots x_{d-1}^{\omega_{d-1}\alpha_{d-1}} $.

We now specialize our general characterization to small dimensions, where explicit normal forms can be given. 
The following theorem provides a complete classification of persistent symmetric tensors in dimensions $d\leq 3$ and $(d=4, n=3)$, showing that in these cases persistence is precisely equivalent to the Hessian being a power of a nonzero linear form.

\begin{theorem}\label{thm:small_dim}
\textnormal{(I)} A symmetric tensor $f\in\text{Sym}^n\mathbbm{C}^d$ for $d\leq 3$ is a persistent tensor iff its Hessian is the $d(n-2)$-th power of a nonzero linear form $\ell$.
\begin{itemize}[leftmargin=1.6cm]
\item[($d=2$).] $f=x_0^{n-1}x_1$ up to $\mathrm{GL}(2,\mathbbm{C})$ (i.e., $f$ is an isobaric of weight $1$) $\Leftrightarrow$ $\Hess(f)=\ell^{2(n-2)}$ $\Leftrightarrow$ $f$ is persistent.

\item[($d=3$).] $f=x_0^{n-1}x_2+x_0^{n-2}x_1^2$ up to $\mathrm{GL}(3,\mathbbm{C})$ (i.e., $f$ is an isobaric of weight $2$)  $\Leftrightarrow$ $\Hess(f)=\ell^{3(n-2)}$ $\Leftrightarrow$ $f$ is persistent.
\end{itemize}

\textnormal{(II)} A symmetric tensor $f\in\text{Sym}^3\mathbbm{C}^4$ is a persistent tensor iff its Hessian is the fourth power of a nonzero linear form $\ell$:
\begin{align*}
f=\lambda_1x_0^{2}x_3+\lambda_2x_0x_1x_2+\lambda_3x_1^3 \text{~up~to~}\mathrm{GL}(4,\mathbbm{C})\text{~for~some~scalar~}\lambda_i~\text{with~}\lambda_1\lambda_2\neq 0\text{~(i.e.,~}f~\text{is~an~isobaric~of~weight~3)~}\\
\Leftrightarrow \Hess(f)=\ell^4 \Leftrightarrow f~\text{is~persistent}.
\end{align*}
\end{theorem}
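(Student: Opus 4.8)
The plan is to prove all three displayed equivalences by closing the cycle [normal form] $\Rightarrow$ [$\Hess(f)=\ell^{d(n-2)}$] $\Rightarrow$ [persistent] $\Rightarrow$ [normal form]. Here the first arrow is a direct Hessian computation, the second is exactly the implication $(\mathrm a)\Rightarrow(\mathrm b)$ of Theorem~\ref{thm:main}, and the third — the substance — will be proved by induction on $n$, the base cases being the nonsingular quadric ($n=2$) and Corollary~\ref{coro:n=3} ($n=3$). First I would record the forward computation: for $f=x_0^{n-1}x_1$, for $f=x_0^{n-1}x_2+x_0^{n-2}x_1^2$, and for $f=\lambda_1x_0^2x_3+\lambda_2x_0x_1x_2+\lambda_3x_1^3$, one verifies directly that $\Hess(f)$ is a nonzero scalar multiple of $x_0^{\,d(n-2)}$, so condition $(\mathrm a)$ holds and persistence follows; for the last form conciseness is precisely the requirement $\lambda_1\lambda_2\neq0$, while $\lambda_3$ is free since it does not affect the Hessian.

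The engine of the converse is the identity $\nabla(\partial_u f)(x)=\mathrm{Hess}(f)(x)\,u$, where $\mathrm{Hess}(f)(x)$ is the Hessian matrix. Thus $x$ is a singular point of the polar $\partial_u f$ iff $\mathrm{Hess}(f)(x)\,u=0$, and such an $x$ can exist only where $\det\mathrm{Hess}(f)(x)=\Hess(f)(x)=0$, i.e. on $V(g)$, using the factorization $\Hess(f)=g^{d}$ furnished by $(\mathrm b)\Rightarrow(\mathrm d)$. For $d=2$ this already gives a complete argument. By the recursive definition the bad locus $S\subset V^\vee$ is a single point, so the pencil $p_t:=\partial_0 f+t\,\partial_1 f$ is persistent for every $t$, whence by induction each $p_t$ has a unique root of multiplicity $n-2$, necessarily lying in the finite set $V(g)\subset\PP^1$. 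Connectedness of the parameter $t$ forces this multiple root to be constant, normalized to $[x_0=0]$; then $x_0^{n-2}$ divides both coordinate partials, so $x_0^{n-2}\mid f$ and $f=x_0^{n-2}q$ for a quadric $q$. Feeding this back into the requirement that every $p_t$ retain a root of order $n-2$, a short discriminant computation forces the pure $x_1^2$-coefficient of $q$ to vanish, leaving $f=x_0^{n-1}(ax_0+bx_1)$ with $b\neq0$ by conciseness, i.e. $f\sim x_0^{n-1}x_1$.

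For $d=3$ I would run the same induction inside the plane curve $V(g)\subset\PP^2$ of degree $n-2$. Writing $M(x)=\mathrm{Hess}(f)(x)$, one has $\mathrm{Hess}(\partial_u f)=\partial_u M$ entrywise, so the inductive hypothesis $\Hess(\partial_u f)=\ell_u^{3(n-3)}$ recasts the problem as: $M$ is a $3\times3$ symmetric matrix of degree-$(n-2)$ forms with $\det M=g^3$ a perfect cube and $\det(\partial_u M)$ a perfect power of a linear form for generic $u$. Geometrically, the singular line of each persistent polar is contained in $V(g)$; since a degree-$(n-2)$ curve contains only finitely many lines, connectedness of $\{u\notin S\}$ pins this line to a fixed $L=\{x_0=0\}$, and propagating $x_0$-divisibility through $M$ should force $V(g)=L$ with multiplicity, i.e. $g=\ell^{n-2}$. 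I expect this propagation to be the main obstacle of the whole proof: ruling out additional components of $V(g)$ and controlling the multiplicity along $L$ from the polar data alone. Here I would invoke the classical results of B.~Segre classifying ternary (and binary) forms whose Hessian is a power of a linear form, which identify such $f$ precisely with the stated isobaric normal forms of weights $1$ and $2$; matching Segre's list against the induction closes the case $d\le3$.

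For part (II) the equivalence of persistence with $\Hess(f)=\ell^{4}$ is immediate from Corollary~\ref{coro:n=3}, since $n=3$ forces the degree-$(n-2)=1$ factor $g$ to be linear. It then remains only to extract the normal form from $\Hess(f)=\ell^4$: after normalizing $\ell=x_0$ with $\GL(4)$, I would solve $\Hess(f)=x_0^4$ directly for the cubic $f$ and reduce modulo the parabolic stabilizer of $[x_0]$. The weight grading organizes this cleanly, as the monomials of weight $3$ in $\mathrm{Sym}^3\CC^4$ are exactly $x_0^2x_3,\ x_0x_1x_2,\ x_1^3$, so the isobaric-of-weight-$3$ subspace is three-dimensional and the computation lands $f$ in its span, with conciseness giving $\lambda_1\lambda_2\neq0$. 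Apart from this $d=3$, $n\ge4$ propagation, every remaining step is either a finite computation or a direct appeal to Theorem~\ref{thm:main}, Corollary~\ref{coro:n=3}, and Segre's classification.
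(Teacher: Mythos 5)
Your overall architecture --- closing the cycle [normal form] $\Rightarrow$ [$\Hess(f)=\ell^{d(n-2)}$] $\Rightarrow$ [persistent] $\Rightarrow$ [normal form], with the middle arrow supplied by (a)$\Rightarrow$(b) of Theorem~\ref{thm:main} --- is exactly the paper's, and your $d=2$ induction (the multiple root of each polar is trapped in the finite set $V(g)$, hence constant by connectedness of the parameter space, then Euler's relation) is a sound variant of the paper's Bertini argument. The case $(d,n)=(4,3)$ is also handled in the same spirit, except that where you propose to ``solve $\Hess(f)=x_0^4$ directly for the cubic $f$'' the paper simply quotes B.~Segre's classification of cubic surfaces with Hessian $\ell^4$ (\S 95 of Seg42); your computation is plausible but is a genuine many-parameter elimination that you have not carried out.

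The real gap is the case $d=3$, $n\ge 4$, which you yourself flag as ``the main obstacle'' and then do not close. Two concrete problems. First, your pivot --- ``the singular line of each persistent polar is contained in $V(g)$'' --- fails at $n=4$: there the polar $\partial_u f$ is $\GL(3)$-equivalent to $x_0(x_0x_2+x_1^2)$, whose singular locus is the single tangency point $(0:0:1)$, not a line; the paper must treat $n=4$ separately (the general member of the linear system is a cubic containing a line, singular at points that by Bertini lie in the base locus, and the fixed line is extracted from those base points). Second, for $n\ge 5$, after pinning the line $L=\{x_0=0\}$ you never perform the ``propagation'': the paper iterates Euler's relation on the partials to get $f=x_0^{n-2}h$ with $h$ a conic, notes $h$ is smooth by conciseness, and then eliminates the secant case $x_0^{n-2}(x_1x_2+x_0^2)$ by computing that its Hessian $(n-1)x_0^{3n-8}\bigl((n-2)x_1x_2-nx_0^2\bigr)$ is not a cube, contradicting condition (d) of Theorem~\ref{thm:main}. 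You replace all of this by an appeal to ``classical results of B.~Segre classifying ternary forms whose Hessian is a power of a linear form,'' but no such classification for arbitrary degree is stated or used in the paper's proof (Segre enters only for cubic surfaces in $\PP^3$ and for rank statements), so this citation cannot carry the argument. Until the $n=4$ anomaly and the tangent-versus-secant dichotomy are handled, the $d=3$ equivalence is not established.
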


\begin{remark}
For $d=2, 3$ there is only one $\mathrm{SL}(d)$-orbit. For $(d=4, n=3)$, there are exactly two $\mathrm{SL}(4)$-orbits, which are respectively the Chasles-Cayley ruled surface (for $\lambda_3\neq 0)$ and the union of a smooth quadric surface with a tangent plane (for $\lambda_3=0$). The latter orbit  is in the closure of the former.

In connection with the entanglement classification (see \cite{GMO}), the normal forms identified in Theorem~\ref{thm:small_dim} acquire a natural interpretation in quantum information theory. Specifically:
\begin{itemize}
\item for $d=2$, the normal form is identical to the $n$-qubit $\mathsf{W}$ state of \cite{DVC};
\item for $d=3$, the normal form coincides with the so-called $n$-qutrit $\mathsf{Y}$ state defined in \cite{GM};
\item for $(d=4, n=3)$, if $\lambda_{3}\neq 0$ and $\lambda_{3}=0$, the normal form yields the so-called $3$-ququart $\mathsf{L}$ and $\mathsf{M}$ states, respectively, both introduced in \cite{GL24}.
\end{itemize}
Thus, the algebraic–geometric classification of orbits underlying persistent tensors aligns precisely with the well-established hierarchy of entanglement families.
\end{remark}

The remainder of the paper is organized as follows. Section \ref{sec.ii} reviews the necessary preliminaries, covering polarization and restitution, the Hessian and the polar map, persistent tensors, and isobaric polynomials. Section \ref{sec.iii} establishes our main characterization theorem, showing how persistence is intrinsically linked to the factorization of the Hessian. Section \ref{sec.iv} turns to the low-dimensional setting, where explicit classifications of persistent tensors can be given. Finally, in Section \ref{sec.v} we prove that cubic persistent polynomials are homaloidal, following an argument by Mammana. Moreover Section \ref{sec.v} situates persistent polynomials within the framework of prehomogeneous spaces, presenting concrete examples, revisiting classical constructions, and highlighting connections with the multiplicative Legendre transform.

\section{Preliminaries}\label{sec.ii}

This section sets the groundwork for the rest of the paper by clarifying definitions, tools, and prior results that are essential to the study of persistent polynomials.

\subsection{Polarization and Restitution}\label{subsec:polrest}

We consider the natural bijective correspondence between symmetric tensors with $d$ modes and homogeneous polynomials of degree $d$. In other words $\mathrm{Sym}^n\mathbbm{C}^d$ and $\mathbbm{C}[x_0,\ldots, x_{d-1}]_n$ are naturally isomorphic.  This correspondence can be easily understood in terms of the classical operators of \emph{polarization} and \emph{restitution}.

\begin{definition}
Let $P\in\mathbbm{C}[x_0,\ldots, x_{d-1}]_n$ be a homogeneous $d$-variate degree-$n$ polynomial, and set $V=\mathbbm{C}^d$.  
The \emph{polarization} of $P$ is the symmetric multilinear map $pol(P)\colon V^{n}\to\mathbbm{C}$ defined by
\begin{equation}
pol(P)(u^{(1)},\ldots, u^{(n)}) \coloneqq \frac{1}{n!}\sum_{i_1=0}^{d-1}\cdots\sum_{i_n=0}^{d-1}
u^{(1)}_{i_1}\cdots u^{(n)}_{i_n}\frac{\partial^n P}{\partial x_{i_1}\cdots \partial x_{i_n}}\,.
\end{equation}
\end{definition}

Conversely, given a multilinear symmetric map $T\colon V^{n}\to\mathbbm{C}$, its \emph{restitution} is defined by 
$T(x,\ldots, x)$. The restitution is precisely the inverse operation of polarization. In particular, the following fundamental relation holds:
\begin{equation}
P(x)=pol(P)(x,\ldots, x).
\end{equation}
This yields the identity
\begin{equation}\label{eq:polfull}
P(x) = \frac{1}{n!}\sum_{i_1=0}^{d-1}\cdots\sum_{i_n=0}^{d-1}
x_{i_1}\cdots x_{i_n}\frac{\partial^n P}{\partial x_{i_1}\cdots \partial x_{i_n}}\,,
\end{equation}
which is obtained by iterating $n$ times the Euler relation
\begin{equation}
P(x)=\frac{1}{n}\sum_{i=0}^{d-1}x_i\frac{\partial P}{\partial x_i}\,.
\end{equation}

This procedure can be further generalized to the case of \emph{partial polarization}, which give natural expressions for the partial derivatives of $P$. 

The first cases are given by
\begin{align}
\frac{\partial P}{\partial a}(x)&=pol(P)(a,x,\ldots, x), \\
\frac{\partial^2 P}{\partial a^2}(x)&=pol(P)(a,a,x,\ldots, x),
\end{align}
and so on. These identities correspond respectively to the formulas
\begin{align}\label{eq:polpartial1}
\sum_{i_1=0}^{d-1}u_{i_1}\frac{\partial P}{\partial x_{i_1}}(x) &= \frac{1}{(n-1)!}\sum_{i_1=0}^{d-1}\cdots\sum_{i_n=0}^{d-1}
u_{i_1}x_{i_2}\cdots x_{i_n}\frac{\partial^n P}{\partial x_{i_1}\cdots\partial x_{i_n}}\,, \\ \label{eq:polpartial2}
\sum_{i_1=0}^{d-1}\sum_{i_2=0}^{d-1}u_{i_1}u_{i_2}
\frac{\partial^2 P}{\partial x_{i_1}\partial x_{i_2}}(x) &= \frac{1}{(n-2)!}\sum_{i_1=0}^{d-1}\cdots\sum_{i_n=0}^{d-1}
u_{i_1}u_{i_2}x_{i_3}\cdots x_{i_n}\frac{\partial^n P}{\partial x_{i_1}\cdots\partial x_{i_n}}\,,
\end{align}
and so on. Note that (\ref{eq:polpartial1}) is obtained by applying (\ref{eq:polfull}) to $\frac{\partial P}{\partial x_{i_1}}$, while (\ref{eq:polpartial2}) arises in the same way from $\frac{\partial^2 P}{\partial x_{i_1}\partial x_{i_2}}$. Note that the expression appearing in Theorem \ref{thm:main}, in the LHS of (\ref{eq:hessx}), before computing $\Hess_x$, is a partial polarization of $f$. 

\subsection{Persistent Tensors}\label{subsec:persistent}

In order to understand the persistent tensors we need to know conciseness of tensors. Informally, a tensor is concise if it cannot be written as a tensor in a smaller ambient space, which means that the tensor uses all dimensions of the local spaces. Therefore, conciseness can be characterized as all single-party reduced density matrices being full rank.

\begin{definition}\label{def:concise}
A tensor $\mathcal{T}\in V_1\otimes\cdots\otimes V_n$ is called concise in the first factor, or $1$-concise, if $\mathcal{T}\notin V'_1\otimes V_2\otimes\cdots\otimes V_n$ for every proper subspace $V'_1\subsetneq V_1$. Conciseness with respect to the other factors is defined analogously. A tensor is called \emph{concise} if it is $i$-concise for all $i\in [n]$.
\end{definition}

Persistent tensors are defined in the general case in Refs. \cite{Gha,GL24}, we adopt here the same notational conventions.
\begin{definition}\label{def:persistent-tensor}
Let $V_i$ be finite dimensional vector spaces.
\begin{itemize}
\item[(i)] A tensor $\mathcal{P}\in V_1\otimes V_2$ is persistent if it is $1$-concise.
\item[(ii)] A tensor $\mathcal{P}\in V_1\otimes\cdots\otimes V_n$ with $n > 2$ is persistent if it is $1$-concise and there exists a subspace $S \subsetneq V_1^{\vee}$ such that the contraction $\langle u|\mathcal{P}\in V_2\otimes\cdots\otimes V_n$ is persistent whenever $\langle u|\notin S$.
\end{itemize}
\end{definition}

Recall symmetric tensors in $\mathrm{Sym}^nV$ correspond to homogeneous polynomials of degree $n$. A polynomial $P\in\mathrm{Sym}^nV$ is concise if its partial derivatives $\partial_i P$ are linearly independent. In the classical language, non-concise polynomials are the so called cones.

In the symmetric case, we get the following special case.

\begin{definition}\label{def:persistent-symtensor}
Let $V$ be a finite dimensional vector space.
\begin{itemize}
\item[(i)] A symmetric tensor $\mathcal{F}\in \mathrm{Sym}^2V$ is persistent if it is a quadratic form of maximal rank, or nonsingular, or concise.
\item[(ii)] A symmetric tensor $\mathcal{F}\in \mathrm{Sym}^nV$ with $n > 2$ is persistent if it is concise and there exists a subspace $S \subsetneq V^\vee$ such that the contraction $\langle u|\mathcal{F}\in \mathrm{Sym}^{n-1}V$ is persistent whenever $\langle u|\notin S$. We can assume $\mathrm{codim}~S=1$ without loss of generality.
\end{itemize}
\end{definition}

As a consequence of the correspondence recalled in Sec. \ref{subsec:polrest}, we can refer equivalently to {\it symmetric persistent tensors} or to {\it persistent polynomials}.

\subsection{The Hessian and the Polar Map}\label{subsec:hessian}
Here, we define the Hessian matrix and polynomial, recall its $\mathrm{GL}(d)$-equivariance, and introduce the polar map and homaloidal polynomials.

\begin{definition}
Let $f\in\mathbbm{C}[x_0,\ldots,x_{d-1}]$ be a polynomial.  The \emph{Hessian matrix} of $f$ is the symmetric $d\times d$ matrix
\begin{equation}
\mathcal{H}_f \coloneqq \left( \frac{\partial^2 f}{\partial x_i \, \partial x_j} \right)_{0 \leq i,j \leq d-1}.
\end{equation}
The \emph{Hessian polynomial} of $f$ is the determinant of its Hessian matrix, that is,
\begin{equation}\label{Hessian}
\Hess(f)\coloneqq\det(\mathcal{H}_f)\in\mathbbm{C}[x_0,\ldots,x_{d-1}]\,.
\end{equation}
We shall refer to $\Hess(f)$ simply as the \emph{Hessian} of $f$.
\end{definition}

We denote by $V(f)\subset\mathbbm{P}^{d-1}$ the projective hypersurface defined by a homogeneous polynomial $f\in\mathbbm{C}[x_0,\ldots,x_{d-1}]$.
A projective hypersurface has vanishing Hessian if and only if the derivatives $\{\partial_i f\}_{i=0}^{d-1}$ are algebraically dependent, which means there is a non-zero polynomial $g\in\mathbbm{C}[x_0,\ldots,x_{d-1}]$ such that $g(\partial_0 f,\ldots,\partial_{d-1}f)=0$.

Gordan and Noether proved that projective hypersurfaces with vanishing Hessian are cones when $d \leq 4$ \cite{GN}. In this range, they correspond to non-concise tensors, which will be recalled in Definition~\ref{def:concise}. For $d\geq 5$, however, there exist concise symmetric tensors associated with polynomials having vanishing Hessian (see Example~\ref{exa:weight4}). For a general reference on these topics, see~\cite{Ru}.

The natural action of $\mathrm{GL}(d)$ on $\mathbbm{C}^d$ extends to the polynomial ring $\mathbbm{C}[x_0,\dots,x_{d-1}]$ via
\begin{equation}
(g\cdot f)(x) = f(g^{-1}x), \quad \forall\, g\in \mathrm{GL}(d)\,.
\end{equation}
The Hessian is $\mathrm{GL}(d)$-equivariant in the sense that, for any homogeneous polynomial $f\in\mathbbm{C}[x_0,\ldots,x_{d-1}]$, there exists an integer $N\in\mathbbm{Z}$ such that
\begin{equation}
\Hess(g \cdot f)=\det(g)^N\,\Hess(f), \quad \forall\, g \in \mathrm{GL}(d)\,.
\end{equation}
A recent reference discussing group actions and the Hessian is \cite{COCDR}.

\begin{definition}
The gradient of a homogeneous polynomial $f$, $\nabla f([x_0:\dots:x_{d-1}]) 
= [ \frac{\partial f}{\partial x_0} : \cdots : \frac{\partial f}{\partial x_{d-1}}]$, induces a rational map 
\begin{equation}\label{polar-map}
\nabla f \colon \mathbbm{P}^{d-1} \dashrightarrow (\mathbbm{P}^{d-1})^\vee,
\end{equation}
called the \emph{polar map}. Its restriction to the hypersurface 
$V(f) \subset \mathbbm{P}^{d-1}$ is the classical Gauss map. The image of the Gauss map is the dual variety $V(f)^\vee \subset (\mathbbm{P}^{d-1})^\vee$.
\end{definition}

\begin{definition}
A symmetric tensor $f$ is called \emph{homaloidal} if the polar map $\nabla f$ in \eqref{polar-map} is a birational map.
\end{definition}

Polynomials with vanishing Hessian have degree of the polar map equal to zero. Homaloidal polynomials make the next case, when the degree of the polar map is equal to one.

\subsection{Isobaric Polynomials and their Hessian}

We now define isobaric polynomials, determine how their weights influence the form of the Hessian, and present key lemmas illustrating persistence obstructions.

\begin{definition}
A homogeneous polynomial $f\in\text{Sym}^n\mathbbm{C}^d$  is called \emph{isobaric} of weight $\delta$ if all the monomials ${\bf{x}}^{\alpha}=x_0^{\alpha_0}\cdots x_{d-1}^{\alpha_{d-1}}$ in its support satisfy
\begin{equation}
\sum_{j=0}^{d-1}j\alpha_j=\delta\,.
\end{equation}
\end{definition}

\begin{lemma}\label{lem:weight}
Let $f\in\text{Sym}^n\mathbbm{C}^d$ be an isobaric polynomial of weight $\delta$. Then $\Hess(f)\in\text{Sym}^{d(n-2)}\mathbbm{C}^d$ and is isobaric of weight $d(\delta-d+1)$. In particular:
\begin{itemize}
\item[(i)] $\Hess(f)=0$ if $\delta<d-1$.
\item[(ii)] $\Hess(f)=\lambda x_0^{d(n-2)}$ for some $\lambda\in\mathbbm{C}$ if $\delta=d-1$. This scalar $\lambda$ may vanish, but it does not vanish for a general $f\in\text{Sym}^n\mathbbm{C}^d$ isobaric of weight $\delta$.
\end{itemize}
\end{lemma}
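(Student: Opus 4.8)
The plan is to analyze the weight structure of the Hessian matrix entries and track how weights propagate through the determinant. The key observation is that if $f$ is isobaric of weight $\delta$, then each monomial in its support has weight exactly $\delta$. Differentiating $\mathbf{x}^\alpha$ twice with respect to $x_i$ and $x_j$ lowers the weight by $i+j$, so the entry $(\mathcal{H}_f)_{ij} = \partial^2 f/(\partial x_i \partial x_j)$ is itself isobaric of weight $\delta - i - j$ (whenever it is nonzero). First I would establish this entrywise claim, noting that products and sums of isobaric polynomials of the same weight remain isobaric, while a single entry $(\mathcal{H}_f)_{ij}$ has homogeneous weight $\delta - i - j$.

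Next I would compute the weight of the determinant. Expanding $\det(\mathcal{H}_f) = \sum_{\sigma \in S_d} \mathrm{sgn}(\sigma) \prod_{i=0}^{d-1} (\mathcal{H}_f)_{i,\sigma(i)}$, each nonzero term is a product of isobaric entries, and its weight is $\sum_{i=0}^{d-1} \big(\delta - i - \sigma(i)\big)$. Since $\sum_i i = \sum_i \sigma(i) = \binom{d}{2} = \tfrac{d(d-1)}{2}$, this collapses to $d\delta - 2\cdot \tfrac{d(d-1)}{2} = d\delta - d(d-1) = d(\delta - d + 1)$, independent of $\sigma$. Hence every monomial in $\Hess(f)$ has weight $d(\delta-d+1)$, so the Hessian is isobaric of that weight (the total degree $d(n-2)$ follows from the standard degree count already recalled). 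This is the clean core of the argument; the main subtlety is simply making sure the weight of a product of entries adds correctly and that the bound $0 \le$ weight $\le (d-1)\cdot$ something is respected so that degenerate (identically zero) terms do not spoil the conclusion.

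For the two special cases I would reason about the admissible range of weights of monomials in $d$ variables of degree $d(n-2)$. A monomial $\mathbf{x}^\beta$ of degree $m := d(n-2)$ has weight between $0$ (for $x_0^m$) and $(d-1)m$ (for $x_{d-1}^m$). If $\delta < d-1$, then $d(\delta - d + 1) < 0$, which is impossible for a genuine monomial weight; hence $\Hess(f)$ must vanish identically, giving (i). If $\delta = d-1$, then the target weight is $d(\delta - d + 1) = 0$, and the only monomial of weight $0$ in degree $m$ is $x_0^{m}$; therefore $\Hess(f) = \lambda\, x_0^{d(n-2)}$ for some scalar $\lambda$, establishing the form claimed in (ii).

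The hard part — or rather the part requiring an explicit witness rather than a formal weight count — is the nonvanishing assertion at the end of (ii): that $\lambda \ne 0$ for a generic isobaric $f$ of weight $\delta = d-1$. Since the condition $\lambda = 0$ is a single polynomial (indeed linear) condition on the coefficients of $f$, it suffices to exhibit one isobaric $f$ of weight $d-1$ for which $\lambda \ne 0$; genericity then follows because a nonzero polynomial condition cuts out a proper Zariski-closed subset. A natural candidate is a monomial or small combination whose Hessian matrix is antidiagonal-dominant, e.g. built from $x_0^{n-1}x_1$-type pieces in low dimension and their higher-dimensional analogues matching the normal forms of Theorem~\ref{thm:small_dim}; computing $\Hess$ on such a witness and checking the coefficient of $x_0^{d(n-2)}$ is nonzero completes the argument. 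I expect this witness construction to be the only step needing a genuine (if short) calculation rather than pure weight bookkeeping.
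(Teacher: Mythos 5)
Your proof is correct and follows essentially the same route as the paper: the entrywise weight count $\delta-i-j$, the determinant expansion collapsing to $d\delta-d(d-1)=d(\delta-d+1)$ independently of the permutation, and the observation that weight $0$ in degree $d(n-2)$ forces the single monomial $x_0^{d(n-2)}$. For the nonvanishing of $\lambda$, the paper's witness is precisely the anti-triangular structure you allude to (when $\delta=d-1$ the entries with $i+j>d-1$ vanish and those with $i+j=d-1$ are scalar multiples of $x_0^{n-2}$, so the determinant reduces to a product over the antidiagonal, generically nonzero); note only that $\lambda$ is a degree-$d$ polynomial, not a linear form, in the coefficients of $f$ --- a harmless slip, since a single nonzero polynomial condition is all the genericity argument requires.
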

\begin{proof}
The $(i,j)$-entry of the Hessian matrix, for $i,j=0,\ldots, d-1$,
has degree $d-2$ and is isobaric of weight $\delta-i-j$.
Hence every term in the determinant of the Hessian matrix has weight $d\delta-\sum_{i=0}^{d-1}i-\sum_{j=0}^{d-1}j=
d(\delta-d+1)$, as we claimed. First item follows immediately: if $\delta<d-1$, then the weight of $\Hess(f)$ becomes negative, so $\Hess(f) = 0$. Second item follows similarly: if $\delta=d-1$, then the weight of $\Hess(f)$ is zero. In this case, the Hessian is nonzero for a general $f$, which is evident from the triangular shape of the Hessian matrix; see Example~\ref{exa:weight4} for illustration.
\end{proof}

\begin{example}\label{exa:weight4} For $d=5$ and $n=3$, any isobaric $f\in\rm{Sym}^3\mathbbm{C}^5$ of weight $4$ has the following form
\[
f=\lambda_1x_0^2x_4+\lambda_2x_0x_1x_3+\lambda_3x_0x_2^2+\lambda_4x_1^2x_2\,.
\]
We get the Hessian matrix is
\[
\begin{pmatrix}
*&*&*&*&2\lambda_1x_0\\
*&*&*&\lambda_2x_0&0\\
*&*&2\lambda_3x_0&0&0\\
*&\lambda_2x_0&0&0&0\\
2\lambda_1x_0&0&0&0&0
\end{pmatrix},
\]
so that
\[
\Hess(f)=8\lambda_1^2\lambda_2^2\lambda_3x_0^5\,,
\]
which is nonzero for general $\lambda_i$. For $\lambda_1=0$ the polynomial is not concise ($x_4$ is missing) and
$\Hess(f)=0$. For $\lambda_3=0$ the polynomial is concise but still $\Hess(f)=0$, this is a famous example constructed by Perazzo \cite{Per}, of a concise polynomial with vanishing Hessian, that we review in Proposition \ref{prop:P4prehom} (see Ref. \cite{Ru} as a general reference on these topics). Here, $f$ is persistent if $\lambda_1\lambda_2\lambda_3\neq0$.
\end{example}

\section{Characterization of Symmetric Persistent Tensors}\label{sec.iii}

This section develops and proves Theorem \ref{thm:main}. The equivalences between conditions $(\textnormal{a})$–$(\textnormal{d})$ are established, providing both effective criteria and an algorithm for testing whether a tensor is persistent.

To facilitate the proof of the main result, we first establish the following lemma.

\begin{lemma}\label{lem:multd}
Let $n \geq 4$. For each $i=1,\ldots,n-2$, let $u^{(i)}=(u^{(i)}_0,\ldots,u^{(i)}_r)$ denote homogeneous coordinates on the $i$-th factor of the product $\mathbbm{P}^r\times\cdots\times\mathbbm{P}^r$ ($(n-2)$ times). Let $G(u^{(1)},\ldots,u^{(n-2)})$ be a multihomogeneous symmetric polynomial of multidegree $(d,\ldots,d)$. Assume that for general choices $\tilde u^{(2)},\ldots,\tilde u^{(n-2)}$, there exists a linear form $\ell_{\tilde u^{(2)},\ldots,\tilde u^{(n-2)}}(u)$ such that $G(u,\tilde u^{(2)},\ldots,\tilde u^{(n-2)})
=(\ell_{\tilde u^{(2)},\ldots,\tilde u^{(n-2)}}(u))^d$. Then there exists a multilinear form $m(u,u^{(2)},\ldots,u^{(n-2)})$ such that $G(u,u^{(2)},\ldots,u^{(n-2)})
= \big(m(u,u^{(2)},\ldots,u^{(n-2)})\big)^d$.
\end{lemma}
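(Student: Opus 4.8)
The plan is to prove that $G$ is already a perfect $d$-th power in the polynomial ring $\mathbbm{C}[u^{(1)},\ldots,u^{(n-2)}]$, by controlling the multiplicities of its irreducible factors; the multidegree $(d,\ldots,d)$ will then force its $d$-th root to be multilinear. First I would set $R=\mathbbm{C}[u^{(2)},\ldots,u^{(n-2)}]$ with fraction field $K$, and regard $G$ as a homogeneous form of degree $d$ in the variables $u^{(1)}$ with coefficients in $R$. The hypothesis says that $G(u,\tilde u^{(2)},\ldots,\tilde u^{(n-2)})$ is the $d$-th power of a linear form for all values of the remaining variables in a dense open set; since being a $d$-th power of a linear form is a closed (catalecticant) condition, it holds on a dense set and hence identically over the function field. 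Thus over $\bar K$ one has $G=\ell^d$ for a linear form $\ell=\sum_j\gamma_j u^{(1)}_j$ with $\gamma_j\in\bar K$.

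The first real step is to descend this root to the polynomial ring via Gauss's lemma. I would pick an index $j_0$ with $\gamma_{j_0}\neq 0$ (possible since $\ell\neq 0$ generically, so some pure-power coefficient of $G$ is nonzero), and note that $\gamma_{j_0}^d$ is an honest coefficient of $G$, hence lies in $R$, while the ratios $\gamma_j/\gamma_{j_0}$ are expressible through the coefficients of $\ell^d$ and therefore lie in $K$. Consequently $\ell=\gamma_{j_0}\cdot L'$ with $L'\in K[u^{(1)}]$ of degree $1$ in $u^{(1)}$, and $G=\gamma_{j_0}^d\,L'^{\,d}$. Clearing denominators, let $m\in R[u^{(1)}]$ be the primitive associate of $L'$ over $R$; then $G=c\,m^d$ for some $c\in K$, and since $m^d$ is primitive over $R$ by Gauss's lemma while $G\in R[u^{(1)}]$, comparison of contents forces $c\in R$. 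As $c$ does not involve $u^{(1)}$ whereas every irreducible factor of the primitive polynomial $m$ does, the factors of $c$ and of $m$ are pairwise coprime; I conclude that every irreducible factor of $G$ involving the variable set $u^{(1)}$ occurs with multiplicity divisible by $d$.

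Symmetry then finishes the argument. Since $G$ is invariant under the full symmetric group $S_{n-2}$ permuting the factors $u^{(1)},\ldots,u^{(n-2)}$, the statement just proved for $u^{(1)}$ transports to every variable set: applying a permutation $\sigma$ carrying a given factor into the first slot and using $\sigma(G)=G$ together with the invariance of multiplicities under the automorphism $\sigma$, I get that every irreducible factor involving $u^{(i)}$ occurs in $G$ with multiplicity divisible by $d$, for each $i$. Because every nonconstant irreducible factor involves at least one of the variable sets, all irreducible factors of $G$ have multiplicity divisible by $d$; hence $G=\lambda\,m^d$ for some polynomial $m$ and scalar $\lambda$, and after rescaling $m$ we obtain $G=m^d$. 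Finally, comparing multidegrees shows that $m$ has multidegree $(1,\ldots,1)$, i.e. $m$ is the desired multilinear form.

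I expect the delicate point to be the descent in the second paragraph: passing from a $d$-th root that a priori lives only over $\bar K$, and is defined only up to $d$-th roots of unity on each fiber, to a genuine primitive polynomial $m$ with the scalar factor $c$ cleanly separated. The clean route is precisely the content/Gauss's-lemma bookkeeping, which lets me avoid any explicit analysis of $c$ — equivalently, of the base locus of the rational map $(u^{(2)},\ldots,u^{(n-2)})\mapsto[\ell]$ landing in the Veronese variety — and it is the symmetry hypothesis that allows this information about a single factor to propagate into a global $d$-th power.
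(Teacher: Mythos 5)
Your proof is correct, and it follows a genuinely different route from the paper's. The paper decomposes $G$ into irreducible factors and analyzes the possible multidegrees $(e_1,\ldots,e_{n-2})$ of each one: a factor with some $e_i\geq 2$ is excluded because its specialization would be a proper power of a linear form, so the general fiber of the projection to the remaining slots would be everywhere nonreduced, contradicting \cite[Lemma III.10.5]{Hart}; a case analysis on the surviving shapes $(1,\ldots,1,0,\ldots,0)$ then shows each factor is either fully multilinear or a linear form in a single slot repeated across all slots. You instead work over the function field $K=\mathbbm{C}(u^{(2)},\ldots,u^{(n-2)})$: the closedness of the Veronese cone places the generic fiber on it, the coefficient ratios $\gamma_j/\gamma_{j_0}$ together with Gauss's lemma descend the $d$-th root to a primitive $m\in R[u^{(1)}]$ with $G=c\,m^{d}$ and $c\in R$, and the resulting divisibility of multiplicities, transported by the $S_{n-2}$-symmetry, makes $G$ a global $d$-th power whose multidegree forces multilinearity of the root. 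Your argument is more elementary and uniform in $n$ (no generic-smoothness input, no case split on the number of nonzero partial degrees), at the price of not exhibiting the finer structure of the individual irreducible factors that the paper's proof reveals; both arguments use the symmetry of $G$ in an essential way. The only point worth flagging is the degenerate case $G=0$ (i.e., $\ell_{\tilde u}\equiv 0$), which your choice of $j_0$ with $\gamma_{j_0}\neq 0$ tacitly excludes; this is harmless, since the lemma is applied only to nonzero $G$, and the paper's proof makes the same tacit assumption when it factors $G$ into irreducibles.
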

\begin{proof}
Decompose $G$ into irreducible factors. Pick an irreducible factor $G_1$ of multidegree $(e_1,\ldots, e_{n-2})$. By the symmetry there are analogous factors for any permutation  of the $e_i$, hence we may assume $e_1\geq\cdots\geq e_{n-2}$ and consider the projections from the variety $V(G_1)\subset\mathbbm{P}^r\times\cdots\times\mathbbm{P}^r$ to the last $(n-3)$ factors $\mathbbm{P}^r\times\cdots\times\mathbbm{P}^r$.
Since $e_1\ge 1$, then the projection is a dominant map between irreducible varieties. Comparing degrees, we get $G_1(u,\tilde u^{(2)},\ldots,\tilde u^{(n-2)})=(\ell_{\tilde u^{(2)},\ldots,\tilde u^{(n-2)}}(u))^{e_1}$, this means that the general schematic fiber (which is an hypersurface in $\mathbbm{P}^r$) is nonreduced at every point, this contradicts \cite[Lemma III.10.5]{Hart}. If $e_1=1$ then the multidegree of $G_1$ is $(1,\ldots, 1,0,\ldots, 0)$. Assume $e_{k}=1$ and $e_{k+1}=0$. If $k=n-2$ then we have a balanced $G_1$ of bidegree $(1,\ldots, 1)$ such that  $G_1(u,\tilde u^{(2)},\ldots ,\tilde u^{(n-2)})=(\ell_{\tilde u^{(2)},\ldots, \tilde u^{(n-2)}}(u))$
for general  $\tilde u^{(2)},\ldots ,\tilde u^{(n-2)}$, then for any $\tilde u^{(2)},\ldots, \tilde u^{(n-2)}$.  
By the assumption we get $G(u,u^{(2)},\ldots, u^{(n-2)})=\big(G_1(u,u^{(2)},\ldots, u^{(n-2)}\big)^d$ as we wanted.
If $k=1$ then $\prod_{i=1}^{n-2} h(u^{(i)})$ divides $G$
for a linear form $h$, hence $\ell_{\tilde u^{(2)},\ldots, \tilde u^{(n-2)}}(u)$ does not depend on ${\tilde u^{(2)},\ldots, \tilde u^{(n-2)}}$. By the assumption we get $G=\prod_{i=1}^{n-2} \big(h(u^{(i)})\big)^d$ which gives the thesis. If $1<k<n-2$, then $n\geq 5$ and we have that $\prod_{i=1}^{k+1}h(u^{(1)},\ldots,\widehat{u^{(i)}},\ldots,u^{(k+1)})$ divides $G$. By the assumption $h(u^{(1)},\ldots,\widehat{u^{(i)}},\ldots, u^{(k+1)})$ is the same linear form for any $ u^{(2)},\ldots, u^{(k+1)}$ and this implies that $h(u^{(1)},\ldots,\widehat{u^{(i)}},\ldots, u^{(k+1)})$ does not depend on $u^{(2)},\ldots,u^{(k+1)}$ which contradicts $e_2=1$ (it should be $e_2=0$), in other words only the extreme cases $k=1$ and $k=n-2$ are possible. This concludes the proof.
\end{proof}

{\bf Proof of the equivalence $(\textnormal{b})$ $\Longleftrightarrow$ $(\textnormal{c})$.}

The LHS of (\ref{eq:hessx}) is a multihomogeneous polynomial $G(u^{(1)},\ldots, u^{(n-2)})$ of multidegree $(d,\ldots, d)$.

We now explain the proof of the implication $(\textnormal{b}) \Longrightarrow (\textnormal{c})$. Let us first consider the case $n=3$. By assumption, there is a linear form $\ell$ such that $f_v$ is a nonsingular quadric whenever $v \notin \{\ell=0\}$. Consequently, $\sum_{i=0}^{d-1}u^{(1)}_i\frac{\partial f}{\partial x_i}$ is nonsingular quadratic for $u^{(1)} \notin \{\ell=0\}$. This implies that
\begin{equation}
\Hess_x\Bigg(\sum_{i=0}^{d-1}u^{(1)}_i\frac{\partial f}{\partial x_i}\Bigg)=\Big(\ell(u^{(1)})\Big)^d,
\end{equation}
where we have rescaled $\ell$. Note that $\ell$ is nonzero, since for general $u^{(1)}$ it is nonzero by the assumption of persistency. Equivalently, any pencil of quadrics contains at least a singular member. This establishes $(\textnormal{c})$ in the case $n=3$.

We next illustrate the case $n=4$. By assumption, there is a linear form $\ell$ such that $\sum_{i=0}^{d-1}u^{(1)}_i\frac{\partial f}{\partial x_i}$ is a persistent cubic for $u^{(1)}\notin \{\ell=0\}$. This entails that
\begin{equation}
\sum_{i=0}^{d-1} \sum_{j=0}^{d-1}u^{(1)}_i u^{(2)}_j\frac{\partial^2 f}{\partial x_i\partial x_j}
\end{equation}
is a nonsingular quadric whenever $u^{(1)}\notin \{\ell=0\}$ and $u^{(2)}\notin \{\ell_{u^{(1)}}=0\}$, where $\ell_{u^{(1)}}$ denotes a linear form depending on $u^{(1)}$. Hence
\begin{equation}
\Hess_x\Bigg(\sum_{i=0}^{d-1}
\sum_{j=0}^{d-1}u^{(1)}_iu^{(2)}_j\frac{\partial^2 f}{\partial x_i\partial x_j}\Bigg) = G\big(u^{(1)},u^{(2)}\big),
\end{equation}
where $G$ is multihomogeneous of multidegree $(d,d)$, symmetric in $(u^{(1)},u^{(2)})$, and satisfies the property that for general $u^{(1)}$, there exists at most one hyperplane $H$ in the coordinate $u^{(2)}$ such that $G(u^{(1)},u^{(2)})\neq 0$ for $u^{(2)}\notin H$. This implies that for general $u^{(1)}$, $G(u^{(1)},u^{(2)})$ is a $d$-power of a linear form in $u^{(2)}$. It follows from Lemma \ref{lem:multd} that $G=g^d$, where $g$ is multihomogeneous of multidegree $(1,1)$.

For the general case, the assumption of persistence implies that for generic $(u^{(2)}, \ldots, u^{(n-2)})$, the polynomial $G$ vanishes on at most one hyperplane in the coordinate $u^{(1)}$. Moreover, this behaviour is symmetric with respect to permutations of the variables $u^{(i)}$. Hence we deduce that $G=g^d$, again from Lemma \ref{lem:multd}, where $g$ is multihomogeneous of multidegree $(1,\ldots,1)$.

This completes the proof of the implication $(\textnormal{b}) \Longrightarrow (\textnormal{c})$, and, moreover, provides the foundation for establishing the reverse implication as well.

When \eqref{eq:hessx} holds, for general $(u^{(2)}, \ldots, u^{(n-2)})$, the quadric
\begin{equation}
\sum_{i_1=0}^{d-1}\cdots\sum_{i_{n-2}=0}^{d-1} u^{(1)}_{i_1}\cdots u^{(n-2)}_{i_{n-2}}
\frac{\partial^{n-2}f(x)}{\partial x_{i_1}\cdots\partial {x_{i_{n-2}}}}
\end{equation}
is singular for at most one hyperplane in the coordinate $u^{(1)}$. This property, in turn, establishes the persistence of $f$.

The implication $(\textnormal{a}) \Longrightarrow (\textnormal{b})$ in Theorem \ref{thm:main} follows directly from the following proposition, since $(\textnormal{a}^{\prime})$ is evidently stronger than $(\textnormal{c})$, which, in turn, is equivalent to $(\textnormal{b})$.

\begin{proposition}\label{prop:aaprime}
Let $f\in \mathrm{Sym}^n\mathbbm{C}^d$ and $\ell$ be a nonzero linear form.  The following are equivalent
\begin{itemize}
\item[$(\textnormal{a})$] $\left((n-2)!\right)^d\Hess(f)=\ell^{d(n-2)}$.
\item[$(\textnormal{a}^{\prime})$] For $(n-2)$ sets of coordinates $u^{(1)},\ldots, u^{(n-2)}\in\mathbbm{C}^d$, one has
\begin{equation}\label{eq:hessxl}
\Hess_x\Bigg(\sum_{i_1=0}^{d-1}\cdots\sum_{i_{n-2}=0}^{d-1} u^{(1)}_{i_1}\cdots u^{(n-2)}_{i_{n-2}}
\frac{\partial^{n-2}f(x)}{\partial x_{i_1}\cdots\partial {x_{i_{n-2}}}}\Bigg) = \Big[\ell(u^{(1)})\cdots \ell(u^{(n-2)})\Big]^d\,,
\end{equation}
 where the Hessian is taken with respect to $x=(x_0,\ldots, x_{d-1})$.
\end{itemize}
\end{proposition}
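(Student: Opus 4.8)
The plan is to prove the two implications separately, with $(\textnormal{a}^{\prime})\Rightarrow(\textnormal{a})$ being a short restitution argument and $(\textnormal{a})\Rightarrow(\textnormal{a}^{\prime})$ carrying essentially all the content. First I would rewrite the left-hand side of \eqref{eq:hessxl}. Writing $T=\nabla^n f$ for the constant, fully symmetric tensor of $n$-th partial derivatives, the matrix inside $\Hess_x$ is $M(u^{(1)},\dots,u^{(n-2)})$ with entries $M_{ab}=\sum u^{(1)}_{i_1}\cdots u^{(n-2)}_{i_{n-2}}\,\partial_{i_1}\cdots\partial_{i_{n-2}}\partial_a\partial_b f=T(e_a,e_b,u^{(1)},\dots,u^{(n-2)})$, so that the LHS of \eqref{eq:hessxl} is $G:=\det M$. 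This $M$ is symmetric in the blocks $u^{(k)}$ and is precisely the entrywise polarization of the Hessian matrix $\mathcal{H}_f$; by the inclusion–exclusion form of polarization, $M=\sum_{S\subseteq[n-2]}(-1)^{n-2-|S|}\mathcal{H}_f\!\big(\sum_{k\in S}u^{(k)}\big)$, and setting all $u^{(k)}=x$ gives $M=(n-2)!\,\mathcal{H}_f(x)$ by iterating the Euler relation behind \eqref{eq:polpartial2}. For $(\textnormal{a}^{\prime})\Rightarrow(\textnormal{a})$ I would simply restitute: the diagonal of $G$ is $\det\big((n-2)!\,\mathcal{H}_f\big)=((n-2)!)^d\Hess(f)$, while the diagonal of the RHS of \eqref{eq:hessxl} is $\ell(x)^{d(n-2)}$, which is exactly $(\textnormal{a})$.

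For $(\textnormal{a})\Rightarrow(\textnormal{a}^{\prime})$ I would first use the $\GL(d)$-equivariance of the Hessian and of the whole identity to reduce to $\ell=x_0$, and then argue by induction on $n$. The base case $n=3$ is immediate, since there the entries of $\mathcal{H}_f$ are linear and polarization is the identity, so $(\textnormal{a})$ and $(\textnormal{a}^{\prime})$ literally coincide. For the inductive step I would exploit the full symmetry of $T$, which gives $M^{f}(w,u^{(1)},\dots,u^{(n-3)})=M^{\partial_w f}(u^{(1)},\dots,u^{(n-3)})$, i.e. fixing one block to a vector $w$ replaces $f$ by its directional derivative $\partial_w f$. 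Granting that $\partial_w f$ again satisfies $(\textnormal{a})$ for generic $w$, the inductive hypothesis yields $G(w,u^{(1)},\dots,u^{(n-3)})=\prod_k\ell_w(u^{(k)})^d$; that is, for generic values of all but one block, $G$ is a perfect $d$-th power of a linear form in the remaining block. By block-symmetry this holds for every block, so Lemma~\ref{lem:multd} applies and gives $G=g^d$ for a multilinear symmetric $g$ of multidegree $(1,\dots,1)$. Finally I would restitute: $g(x,\dots,x)^d=\ell(x)^{d(n-2)}$ forces $g(x,\dots,x)=\zeta\,\ell(x)^{n-2}$ with $\zeta^d=1$, and since a multilinear symmetric form is the full polarization of its own diagonal, $g=\zeta\prod_k\ell(u^{(k)})$; hence $G=\prod_k\ell(u^{(k)})^d$, which is $(\textnormal{a}^{\prime})$.

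The main obstacle is the claim used in the inductive step: that $\det\mathcal{H}_f=\ell^{d(n-2)}$ forces $\det\mathcal{H}_{\partial_w f}$ to be a $d(n-3)$-th power of a linear form for generic $w$ — equivalently, that the partial specialization $\det M(u,\tilde u^{(2)},\dots,\tilde u^{(n-2)})$, a degree-$d$ form in $u$ that is the discriminant of a linear system of quadrics, is a perfect $d$-th power. I expect to prove this by analyzing the degeneration of that linear system through a Segre/Kronecker-type normal form for the relevant pencils of quadrics, showing that the pure-power hypothesis forces every member of the system to be singular only along a single hyperplane and with maximal multiplicity, a property stable under passing to a generic directional derivative. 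Two sanity checks guide this step and show that no cheaper argument can succeed: the hypothesis that the diagonal is a \emph{pure} power cannot be dropped (Example~\ref{exa:hessbinary}(ii) is a Hessian whose diagonal is $\propto(x_0x_1)^2$, not a power of a linear form, and indeed $G$ is not a perfect power), and the full symmetry of $T$ is indispensable (a symmetric matrix of forms that is \emph{not} a genuine Hessian can satisfy $(\textnormal{a})$ while its polarized determinant fails to factor as $\prod_k\ell(u^{(k)})^d$). Only the combination of the pure-power hypothesis with the Hessian full-symmetry structure yields the desired factorization.
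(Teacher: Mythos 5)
Your direction $(\textnormal{a}^{\prime})\Rightarrow(\textnormal{a})$ is correct and is the same restitution argument as in the paper: setting all blocks equal to $x$ turns the polarized matrix $M$ into $(n-2)!\,\mathcal{H}_f(x)$, so its determinant becomes $((n-2)!)^d\Hess(f)$, matching $\ell(x)^{d(n-2)}$ on the right-hand side.

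The converse direction, however, has a genuine gap, and it sits exactly where you place your ``main obstacle.'' Your induction requires the hereditary statement that $\Hess(f)=\ell^{d(n-2)}$ forces $\Hess(\partial_w f)$ to be the $d(n-3)$-th power of a linear form for generic $w$, and nothing in the proposal establishes this. It cannot be extracted by restitution, because the diagonal $G(u,\dots,u)$ does not determine the partial specialization $G(w,u,\dots,u)$: the restitution map on block-symmetric forms of multidegree $(d,\dots,d)$ is far from injective, as Example~\ref{exa:hessbinary}~(i) and~(ii) show (two different $G$'s with proportional diagonals). Moreover, the missing claim is essentially equivalent to the implication $(\textnormal{a})\Rightarrow(\textnormal{c})$ that Proposition~\ref{prop:aaprime} is designed to deliver — once the proposition is available the claim follows by specializing $g^d$, but without it you are assuming both the conclusion one degree down and the descent property that carries all the content, so the induction is circular in substance. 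The proposed repair via Segre/Kronecker normal forms is only a declaration of intent, and it is not obviously adequate: $\mathcal{H}_f(x)$ is a $d$-parameter family of quadrics with entries of degree $n-2$, not a pencil. The paper takes an entirely different and non-inductive route here: it regards $G$ as a symmetric tensor in $\mathrm{Sym}^{n-2}U$, with $U$ the space of degree-$d$ forms in a single block $u$, and deduces from $G(u,\dots,u)=\ell(u)^{d(n-2)}$ that $G$ is the rank-one element $\prod_k\ell(u^{(k)})^d$; in particular it makes no use of Lemma~\ref{lem:multd}, which the paper reserves for the equivalence $(\textnormal{b})\Leftrightarrow(\textnormal{c})$. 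Your final step (a symmetric multilinear $g$ is determined by its diagonal, whence $g=\zeta\prod_k\ell(u^{(k)})$) is fine, but it is reached only after the unproven hereditary claim.
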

\begin{proof}
We begin by observing that
\begin{equation}\label{eq:hessianstep}
\Hess_x\Bigg(\sum_{i_1=0}^{d-1}\cdots\sum_{i_{n-2}=0}^{d-1} u^{(1)}_{i_1}\cdots u^{(n-2)}_{i_{n-2}}
\frac{\partial^{n-2}f(x)}{\partial x_{i_1}\cdots\partial {x_{i_{n-2}}}}\Bigg)=
\det\Bigg(\sum_{i_1=0}^{d-1}\cdots\sum_{i_{n-2}=0}^{d-1} u^{(1)}_{i_1}\cdots u^{(n-2)}_{i_{n-2}}
\frac{\partial^{n}f(x)}{\partial x_{i_1}\cdots\partial {x_{i_{n}}}}\Bigg)_{i_{n-1},i_n}.
\end{equation}
By restitution, setting $u^{(1)}=\cdots=u^{(n-2)}=x$ in \eqref{eq:hessianstep} yields
\begin{equation}\label{eq:restituted}
\det\Bigg(\sum_{i_1=0}^{d-1}\cdots\sum_{i_{n-2}=0}^{d-1} x_{i_1}\cdots x_{i_{n-2}}
\frac{\partial^{n}f(x)}{\partial x_{i_1}\cdots\partial {x_{i_{n}}}}\Bigg)_{i_{n-1},i_n}.
\end{equation}
By virtue of equation~\eqref{eq:polfull}, the expression in \eqref{eq:restituted} simplifies to
\begin{equation}
\det\Bigg((n-2)!
\frac{\partial^{2}f(x)}{\partial x_{i_{n-1}}\partial {x_{i_{n}}}}\Bigg)_{i_{n-1},i_n}=\left((n-2)!\right)^d\Hess(f)\,.
\end{equation}

The implication $(\textnormal{a}^{\prime}) \Longrightarrow (\textnormal{a})$ follows. In order to prove the reverse implication $(\textnormal{a}) \Longrightarrow (\textnormal{a}^{\prime})$, let $G(u^{(1)},\ldots, u^{(n-2)})$ be the LHS of (\ref{eq:hessxl}), it is a multihomogeneous polynomial of multidegree $(d,\ldots, d)$. Moreover, it is symmetric under permutations of $u^{(i)}$ and it satisfies by assumption
\begin{equation}\label{eq:rank1}
G(u,\ldots, u)=\left[\ell(u)\right]^{d(n-2)}.
\end{equation}
Identifying $\mathrm{Sym}^n \mathbbm{C}^d = U$, the polynomial $G$ can be regarded as a symmetric tensor in $U^{\otimes(n-2)}$, in fact lying in $\mathrm{Sym}^{n-2} U$. Equation \eqref{eq:rank1} then shows that $G$ has rank one. This implies there exists $H \in U$, that is, a homogeneous polynomial of degree $(n-2)$ on $\mathbbm{C}^d$, such that $G(u^{(1)},\ldots, u^{(n-2)})=H(u^{(1)})\cdots H(u^{(n-2)})$. Hence we get $G(u,\ldots,u)=[H(u)]^{(n-2)}$, and by \eqref{eq:rank1} we conclude that $H(u) = [\ell(u)]^d$. This completes the proof of $(\textnormal{a}^{\prime})$.
\end{proof}

The implication $(\textnormal{c}) \Longrightarrow (\textnormal{d})$ in Theorem \ref{thm:main} follows by restitution as in the proof of Prop. \ref{prop:aaprime}, after adjusting a scalar multiple to $g$.

\begin{remark}
We do not know whether the implication $(\textnormal{b}) \Longrightarrow (\textnormal{a})$ in Theorem~\ref{thm:main} holds in general. It is valid, however, when $n=3$ or $d \leq 4$, as established by Corollary~\ref{coro:n=3} and Theorem~\ref{thm:small_dim}, respectively. On the other hand, the implication $(\textnormal{d}) \Longrightarrow (\textnormal{c})$ in Theorem~\ref{thm:main} fails, with a counterexample provided in Example~\ref{exa:hessbinary}~(i) or~(ii).

\end{remark}

\begin{remark}\label{rem:bsegre}
B. Segre, in \cite[(20)]{Seg60}, provides the example
\begin{equation}
f(x)=x_0^{2m}+(1-2m)x_1^{2m}-x_2^mx_3^m\,,
\end{equation}
whose Hessian is a perfect square, explicitly given by
\begin{equation}
\Hess(f)=\left[2m^2(2m-1)^2(x_0x_1x_2x_3)^{m-1}\right]^2\,.
\end{equation}

Moreover, he establishes a geometric condition for the Hessian of a quaternary form $f$ to be a square modulo $f$, involving the asymptotic lines of the surface $V(f)$.
\end{remark}
\section{Classification of Symmetric Persistent Tensors in Small Dimension}\label{sec.iv}

In what follows, we present a detailed proof of Theorem \ref{thm:small_dim}, focusing on the cases $d=2,3,4$. Building on classical results and applying inductive techniques, we obtain a complete classification of persistent polynomials in these dimensions, providing explicit normal forms together with a description of the associated geometric loci.

\begin{theorem}
Let $d=2$ and $f$ be persistent. Then $f=x_0^{n-1}x_1$ up to $\mathrm{GL}(2,\mathbbm{C})$.
\end{theorem}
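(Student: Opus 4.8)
The plan is to induct on the degree $n$ and reduce persistence to a divisibility statement. I identify $f$ with its multiset of projective roots on $\mathbbm{P}^1$; recall that a binary form is concise exactly when it is not an $n$-th power of a single linear form, and that $\mathrm{GL}(2,\mathbbm{C})$ preserves root multiplicities. Since $x_0^{n-1}x_1$ is precisely the orbit of forms with root multiplicities $\{n-1,1\}$, it suffices to prove that a persistent $f$ factors as $f=L^{n-1}M$ with $L,M$ independent linear forms. The conceptual heart is to show that the pencil of first derivatives $\langle\partial_0 f,\partial_1 f\rangle$, viewed as a line in $\mathbbm{P}^{n-1}=\mathbbm{P}(\mathrm{Sym}^{n-1}\mathbbm{C}^2)$, is a tangent line to the rational normal curve $C_{n-1}$; equivalently, that both partials are divisible by $L^{n-2}$ for a common linear form $L$.

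For the inductive step with $n\geq 4$, persistence provides a hyperplane $S\subset V^\vee$ such that $\partial_u f=u_0\partial_0f+u_1\partial_1f$ is persistent of degree $n-1$ for $u\notin S$; by the inductive hypothesis $\partial_u f=L_u^{\,n-2}M_u$, i.e. the general member of the derivative pencil has a root of multiplicity $n-2\geq 2$, so the pencil line lies in the tangential variety of $C_{n-1}$. To see that the multiplicity-$(n-2)$ root does not move with $u$, I would set $p=\gcd(\partial_0f,\partial_1f)$ and write $\partial_0f=p\,a$, $\partial_1f=p\,b$ with $\gcd(a,b)=1$, so that $\partial_uf=p\,(u_0a+u_1b)$ is $p$ times a base-point-free pencil; conciseness forces $\deg a=\deg b=d'\geq 1$. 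The key input is that a base-point-free pencil on $\mathbbm{P}^1$ defines a degree-$d'$ map $\mathbbm{P}^1\to\mathbbm{P}^1$, so by Riemann--Hurwitz only finitely many members have a multiple root; hence for general $u$ the roots of $u_0a+u_1b$ are simple and avoid the finitely many roots of $p$, forcing the $(n-2)$-fold root to be a fixed root of $p$. Thus $p$ has a root of multiplicity $\geq n-2$, while $\deg p=n-1-d'\leq n-2$, so $d'=1$ and $p=c\,L^{n-2}$.

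The base cases are $n=2$ and $n=3$. For $n=2$, persistent means a nonsingular quadratic, $\mathrm{GL}(2,\mathbbm{C})$-equivalent to $x_0x_1=x_0^{\,2-1}x_1$. For $n=3$ the tangential picture degenerates (every line meets the relevant locus), so I argue via the discriminant: $\mathrm{disc}(\partial_u f)$ is a binary quadratic in $u$ whose zero set is the locus of singular derivatives, and persistence forces this locus into the single hyperplane $S$; a nonzero binary quadratic vanishing on one line through the origin is a perfect square, so the derivative pencil is tangent to the conic of squares $C_2$ and both partials are divisible by a common linear form $L$. This is exactly where the single-hyperplane strength of Definition~\ref{def:persistent-symtensor} is indispensable: it excludes the three-distinct-roots cubic $x_0^3+x_1^3$, whose bad locus $\{u_0=0\}\cup\{u_1=0\}$ is two lines rather than one, leaving only $x_0^2x_1$.

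In all cases I reach $L^{n-2}\mid\partial_0f$ and $L^{n-2}\mid\partial_1f$. Euler's relation $n f=x_0\partial_0f+x_1\partial_1f$ then gives $L^{n-2}\mid f$, say $f=L^{n-2}Q$ with $Q$ quadratic; differentiating and matching divisibility yields $L\mid(n-2)L_iQ$ for some $i$ with $L_i=\partial L/\partial x_i\neq 0$, hence $L\mid Q$ and $f=L^{n-1}M$. Conciseness rules out $M\parallel L$, and an element of $\mathrm{GL}(2,\mathbbm{C})$ sending $L\mapsto x_0$, $M\mapsto x_1$ gives $f=x_0^{n-1}x_1$. I expect the main obstacle to be the rigidity step in the inductive case — proving the $(n-2)$-fold root of $\partial_uf$ is a fixed base component rather than varying with $u$ (equivalently, that the only lines on the tangential variety of $C_{n-1}$ are its tangent lines); the Riemann--Hurwitz argument is the cleanest self-contained route, with the cubic base case a close second in subtlety, since there genericity is powerless and only the single-hyperplane hypothesis succeeds.
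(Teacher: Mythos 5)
Your proof is correct, and its skeleton is the same as the paper's: induct on $n$, use the inductive hypothesis to see that the general derivative $\partial_u f$ has a root of multiplicity $n-2$, show that this root cannot move with $u$, and then lift the fixed factor to $f$ itself. The difference lies in how the two arguments justify the rigidity step and in how much of the rest is made explicit. The paper disposes of the base cases $n=2,3$ as ``straightforward,'' invokes Bertini's theorem to conclude that the moving multiple root must sit in the base locus of the pencil of derivatives, and then asserts in one line that $f$ acquires a root of multiplicity $n-1$. You replace the Bertini appeal with a self-contained computation: factoring out $p=\gcd(\partial_0 f,\partial_1 f)$, using Riemann--Hurwitz on the base-point-free residual pencil to see that its general member has only simple roots away from $V(p)$, and then bounding $\deg p$ to force $p=cL^{n-2}$. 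You also supply the $n=3$ base case via the discriminant of the derivative pencil (a nonzero binary quadric in $u$ vanishing at a single point of $\mathbbm{P}^1$ is a perfect square, so the pencil is a tangent line to the conic of squares), and you make the final promotion from $L^{n-2}\mid f$ to $f=L^{n-1}M$ explicit by differentiating $f=L^{n-2}Q$ and comparing divisibility. What the paper's route buys is brevity and a formulation (singular points of general members lie in the base locus) that generalizes directly to the $d=3,4$ cases treated next; what your route buys is an elementary, fully checkable argument that does not leave the base cases or the last lifting step to the reader.
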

\begin{proof}
For $n=2, 3$ the result is straightforward. In this case, the subspace $S$ is a point in $\mathbbm{P}^1$. We may work by induction on $n\geq 4$. If $\frac{\partial f}{\partial p}$ is persistent for $p\notin S$ then $\frac{\partial f}{\partial p}$ is a form of degree $n-1$ which has, by induction, a root of multiplicity $n-2$. Then in the linear system $\frac{\partial f}{\partial p}$ the root of multiplicity $n-2$ must be constant by Bertini Theorem. Hence, $f$ itself has the same root of multiplicity $n-1$.
\end{proof}

\begin{theorem}
Let $d=3$ and $f$ be persistent. Then $f=x_0^{n-2}(x_0x_2+x_1^2)$ up to $\mathrm{GL}(3,\mathbbm{C})$.
Geometrically, this is a line counted $n-2$ times and a tangent conic.
\end{theorem}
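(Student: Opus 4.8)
The plan is to mirror the one–variable argument used for $d=2$: induct on $n$, dispatch the base cases $n=2,3$ directly, and use a Bertini-type argument to propagate the normal form of the derivatives back to $f$. For $n=2$ persistence means $f$ is a nonsingular conic, which is $\GL(3,\mathbbm{C})$-equivalent to $x_0x_2+x_1^2$, the case $n-2=0$. For $n=3$ I would invoke Corollary~\ref{coro:n=3}: a persistent cubic satisfies $\Hess(f)=\ell^3$, and the only ternary cubic whose Hessian is the cube of a linear form (as one checks against the classical projective classification of plane cubics; e.g. the transverse line-plus-conic has $\Hess\propto x_0(x_1^2+x_2^2)$ and the irreducible cuspidal cubic has $\Hess\propto x_0x_1^2$, neither of which is a cube) is $x_0^2x_2+x_0x_1^2=x_0(x_0x_2+x_1^2)$, a line together with a tangent conic. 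This is precisely the normal form for $n-2=1$.

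For the inductive step with $n\geq 5$, fix a persistent $f$ and recall that for $u$ outside a hyperplane the derivative $\partial f/\partial u$ is a persistent form of degree $n-1$. By the inductive hypothesis it equals, in suitable coordinates, $\ell_u^{\,n-3}\,q_u$: a line $\ell_u$ of multiplicity $n-3$ together with a nonsingular conic $q_u$ tangent to it. Conciseness of $f$ guarantees that $\partial_0f,\partial_1f,\partial_2f$ are linearly independent, so they span a genuine net $N$ of plane curves of degree $n-1$. The key point is that the multiple line $\ell_u$ does not depend on $u$. Since $n-3\geq 2$, every point of $\ell_u$ is a singular point of $V(\partial f/\partial u)$, and Bertini's theorem forces the singular locus of a general member of $N$ to lie in the base locus $\operatorname{Bs}(N)=\operatorname{Sing}V(f)$. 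Thus $\ell_u\subseteq\operatorname{Bs}(N)$ for general $u$; were $\ell_u$ to vary, this one-parameter family of lines would fill a two-dimensional locus in $\mathbbm{P}^2$ and force $\operatorname{Bs}(N)=\mathbbm{P}^2$, impossible for a concise $f$. Hence $\ell_u=\ell$ is a fixed line, which I normalize to $\{x_0=0\}$.

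Once the line is fixed I would finish by divisibility. From $\ell_u=\{x_0=0\}$ one gets $x_0^{\,n-3}\mid \partial_i f$ for every $i$ (the divisibility holds on a dense open set of directions, hence for all of them, in particular the coordinate ones). Writing $f=\sum_k x_0^k f_k(x_1,x_2)$ and reading off these divisibilities kills $f_k$ for $k\leq n-4$; the one extra order coming from $x_0^{\,n-3}\mid\partial_0 f$ then also kills $f_{n-3}$, so $x_0^{\,n-2}\mid f$ and $f=x_0^{\,n-2}Q$ with $Q$ a conic not divisible by $x_0$. Differentiating gives $\partial f/\partial u=x_0^{\,n-3}\bigl[(n-2)u_0Q+x_0\,\partial Q/\partial u\bigr]$, so the conic factor restricts on $\{x_0=0\}$ to a nonzero multiple of $Q|_{x_0=0}$; tangency of $q_u$ to the line forces $Q|_{x_0=0}$ to be a perfect square, which I normalize to $x_1^2$. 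Conciseness then forces the monomial $x_0x_2$ to appear, and a triangular change of coordinates (completing the square in $x_1$, then absorbing the residual $x_0^2$ term into $x_2$) brings $Q$ to $x_0x_2+x_1^2$, yielding $f=x_0^{\,n-2}(x_0x_2+x_1^2)$.

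The main obstacle is the remaining case $n=4$, which the Bertini argument does not cover: there the line component of the degree-$3$ derivative has multiplicity $n-3=1$ and is reduced, so $\operatorname{Sing}V(\partial f/\partial u)$ is only the single tacnode point $p_u$ where the line meets its tangent conic, and Bertini locates $p_u$ rather than the whole line. To handle this I would track $p_u\in\operatorname{Sing}V(f)$ and split on $\dim\operatorname{Sing}V(f)$: if it is one-dimensional one recovers a fixed double line directly, reducing to the divisibility argument above; if it is finite, then $p_u$ is a fixed point at which the full Hessian matrix vanishes, forcing a triple point of $f$, and analyzing the tangent cone of the polar cubics shows that the binary cubic governing that triple point must be a perfect cube $L^3$, which pins the line component to $\{L=0\}$ and again reduces to divisibility. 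Making this dichotomy clean, and ruling out the degenerate possibilities by conciseness, is the delicate part of the argument.
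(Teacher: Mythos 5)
Your strategy --- induction on $n$, a Bertini argument to show the multiple line of the general polar $\partial f/\partial u$ is independent of $u$, then divisibility to pull the factor $x_0^{n-2}$ back to $f$ --- is the same as the paper's, and the steps you actually carry out ($n=2,3$, the $n\geq 5$ inductive step, the divisibility bookkeeping) are sound. Your way of establishing tangency at the end, by restricting the residual conic of $\partial f/\partial u$ to $\{x_0=0\}$ and using the inductive hypothesis, is a minor variation on the paper, which instead disposes of the secant case $x_0^{n-2}(x_1x_2+x_0^2)$ by computing that its Hessian is not a cube and invoking condition (d) of Theorem \ref{thm:main}; either route works.

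The genuine gap is $n=4$, which you correctly flag but do not close --- and without it the induction never starts, since the inductive hypothesis needed for $n=5$ is precisely the degree-$4$ statement. Your proposed dichotomy on $\dim \mathrm{Sing}\,V(f)$, with a tangent-cone analysis at a triple point in the zero-dimensional case, is plausible but is left as a plan: the key assertions (that the tangent cone at the triple point must be a perfect cube $L^3$ rather than $L^2M$ or worse, and that the degenerate configurations are excluded by conciseness) are not proved. The paper's fix is shorter and worth recording: for $n=4$ the general polar is, by the cubic case, a line plus a tangent conic, hence singular along the length-two scheme where the two components meet; Bertini places this scheme in the base locus of the net of polars, so it is independent of $u$, and a fixed length-two subscheme (a point together with a tangent direction) already determines the line component. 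From there the divisibility argument proceeds exactly as in your $n\geq 5$ case. Until the $n=4$ step is written out in full, the proof is incomplete.
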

\begin{proof}
In this case, the subspace $S$ is a line in $\mathbbm{P}^2$. For $n=3$ the result is well known (see e.g. Table 2 in Ref. \cite{Ban}), since the general plane cubic can be reduced, up to $\mathrm{GL}(3,\mathbbm{C})$-action, to the Hesse form $x_0^3+x_1^3+x_2^3+3\lambda x_0x_1x_2$. We continue by induction on $n$. 

If $\frac{\partial f}{\partial p}$ is persistent for $p\notin S$ then $\frac{\partial f}{\partial p}$ is a form of degree $n-1$ which has, by induction, a component consisting of a line of multiplicity $n-2$. 
For $n=4$, we get a linear system of cubics, such that its general member is a cubic containing a line. Then the general member is singular in two points that, by Bertini theorem, must be in the base locus. Hence the two points are the same for the general member of the linear system, this implies that $\frac{\partial f}{\partial p}$ contains the same line $\ell$ for general $p$. By Euler relation, $f$ itself contains the line $\ell$. We may assume $\ell=x_0$. Then $f=x_0g$ with $g$ a cubic. Computing derivatives $f_0=g+x_0g_0$, $f_1=x_0g_1$, $f_2=x_0g_2$, since the general linear combination $\alpha_0f_0+\alpha_1f_1+\alpha_2f_2$ is divisible by $x_0$, it follows that $g$ is divisible by $x_0$ and we get $f=x_0^2h$ with $h$ a conic, as we wanted. We show later that the conic $h$ has to be tangent to the line $V(x_0)$. 

For $n\geq 5$ the argument is a bit shorter. By Bertini theorem $\frac{\partial f}{\partial p}$ is a form of degree $n-1$ that contains $\ell^{n-3}$ as a factor for a line $\ell$, hence 
 $\frac{\partial f}{\partial p}$ is singular on a line $\ell$. It follows by Bertini Theorem that the line $\ell$ is in the base locus and does not depend on $p$. By Euler relation, $f$ itself contains the line $\ell$. By induction,  the general linear combination $\alpha_0f_0+\alpha_1f_1+\alpha_2f_2$ is divisible by $x_0^{n-3}$ and again by Euler relation $f=x_0^{n-3}g$ with $g$ a cubic. Computing derivatives $f_0=(n-3)x_0^{n-4}g+x_0^{n-3}g_0$, $f_1=x_0^{n-3}g_1$, $f_2=x_0^{n-3}g_2$, since the general linear combination $\alpha_0f_0+\alpha_1f_1+\alpha_2f_2$ is divisible by $x_0^{n-3}$ it follows that $g$ is divisible by $x_0$ and we get $f=x_0^{n-2}h$ with $h$ a conic, as we wanted. The conic is nonsingular by conciseness. Then by $\mathrm{SL}(3)$-action we have the tangent case, as in the statement (where we may invoke \cite[Prop. 2.7]{COCDR}) or the secant case
$x_0^{n-2}(x_1x_2+x_0^2)$. The corresponding Hessian in the secant case is
$(n-1)x_0^{3n-8}\left((n-2)x_1x_2-nx_0^2\right)$, which is not a cube. Hence, according to (d) of Theorem \ref{thm:main}, the secant case is not persistent. Equivalently, we can check that
$f$ has weight $2$ up to $\mathrm{SL}(3)$-action.
\end{proof}

\begin{corollary}
Let $d=3$, $n\geq 2$. Let $\Hess(f)=\lambda\ell^{3(n-2)}$ with $\ell$ a linear form, $\lambda$ any scalar, possibly zero. Then there are two cases:
\begin{itemize}
\item[(i)] $f$ is a cone, this case consists of a variety of dimension $n+2$ in $\mathbbm{C}[x_0,x_1,x_2]_n$.
\item[(ii)] $f=\ell^{n-2}q$ with $q$ a smooth conic tangent to $\ell$. 

When $q$ become singular, in the closure of (ii), we have the intersection of the two cases, of dimension $5$ (when $n\geq 3$) , while the closure of the second case has dimension $6$ (again when $n\geq 3$).
\end{itemize}
\end{corollary}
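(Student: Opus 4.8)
The plan is to resolve the stated dichotomy by splitting on the scalar $\lambda$, and then to obtain the three dimension assertions by elementary parameter counts on incidence varieties, all taken projectively in $\mathbbm{P}(\mathbbm{C}[x_0,x_1,x_2]_n)$. First I would treat $\lambda=0$: here $\Hess(f)=0$, and since $d=3\leq 4$ the Gordan--Noether theorem \cite{GN} gives that $f$ is a cone, placing it in case (i). Conversely every cone has identically vanishing Hessian, so case (i) is exactly the locus $\{\Hess(f)=0\}$. The case $n=2$ is the elementary statement that a ternary quadric is either singular, hence a cone, or nonsingular; so for the remaining analysis I take $n\geq 3$.

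For $\lambda\neq 0$ I would absorb the scalar into the linear form, writing $\Hess(f)=\bigl(\lambda^{1/(3(n-2))}\ell\bigr)^{3(n-2)}$, so that the Hessian is the $3(n-2)$-th power of a nonzero linear form. Theorem \ref{thm:small_dim}(I) for $d=3$ then forces $f$ to be persistent and $\mathrm{GL}(3)$-equivalent to $x_0^{n-2}(x_0x_2+x_1^2)$; taking $\ell=x_0$ and $q=x_0x_2+x_1^2$ exhibits $f=\ell^{n-2}q$ with $q$ a smooth conic meeting $V(\ell)$ only at $[0:0:1]$, hence tangent to $\ell$, which is case (ii). Since $q$ is irreducible of degree two, $V(f)=V(\ell)\cup V(q)$ is not a union of lines through a point, so (ii) is disjoint from the cone locus; equivalently $\Hess(f)\neq0$ there. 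Thus the two cases are disjoint and exhaustive.

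For the dimension counts I would argue as follows. The cones form the image of the incidence variety $\mathcal{I}=\{([W],[f]):f\in\mathrm{Sym}^nW\}\subset\mathrm{Gr}(2,3)\times\mathbbm{P}(\mathbbm{C}[x_0,x_1,x_2]_n)$; its first projection is a $\mathbbm{P}^n$-bundle over the surface $\mathrm{Gr}(2,3)$, so $\dim\mathcal{I}=n+2$, and since a general cone determines its plane $W$ uniquely (as the span of its linear factors) the second projection is generically injective, giving dimension $n+2$ for case (i). For the closure of (ii) I would parametrize by a line $[\ell]\in\mathbbm{P}^2$ together with a conic tangent to it; tangency is the vanishing of the discriminant of $q|_{V(\ell)}$, one hypersurface condition inside the $\mathbbm{P}^5$ of conics, so the tangent conics form a $4$-dimensional family, and unique factorization recovers $\ell$ and $q$ from $f=\ell^{n-2}q$, whence the map is generically injective and the closure of (ii) has dimension $2+4=6$. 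Finally, the intersection of the two closures is exactly the locus where $q$ degenerates to a pair of lines $m_1m_2$: the limiting tangency forces $q|_{V(\ell)}$ to have a double root, i.e. $m_1,m_2$ meet $V(\ell)$ at a common point $p$, so $\ell,m_1,m_2$ are concurrent and $f=\ell^{n-2}m_1m_2$ is genuinely a cone. Counting $p\in\mathbbm{P}^2$ (dimension $2$), the line $\ell$ through $p$ (dimension $1$), and the unordered pair $m_1m_2$ of lines through $p$ (dimension $2$) yields dimension $5$.

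I expect the main obstacle to lie in the bookkeeping of these dimension counts rather than in any deep point: one must confirm the generic finiteness, indeed generic injectivity, of each parametrizing map so that image dimensions equal source dimensions, keep a consistent projective convention throughout, and verify that the boundary of case (ii) meeting the cone locus is precisely the concurrent configuration. In particular one must check that limiting tangency forces the three lines to be concurrent, and that the lower-dimensional strata --- double lines, or the case in which $\ell$ is itself a component of $q$ --- do not affect the generic dimension.
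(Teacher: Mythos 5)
Your proof is correct and follows exactly the route the paper leaves implicit: the corollary is stated there without proof, being the combination of Gordan--Noether (vanishing Hessian implies cone for $d\leq 4$) in the case $\lambda=0$ with the $d=3$ classification theorem just proved in the case $\lambda\neq 0$, plus the standard incidence-variety dimension counts you carry out. The only point worth flagging is that your projective convention is the one under which the stated numbers $n+2$, $6$, $5$ come out right, even though the corollary nominally refers to the vector space $\mathbbm{C}[x_0,x_1,x_2]_n$; your treatment of the degenerate strata (concurrency in the limit, and the stratum where $\ell$ divides $q$) is a correct and welcome addition to what the paper merely asserts.
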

\begin{remark}
For $n \geq 4$, the locus $\{f\mid\Hess(f)=\lambda\ell^{3(n-2)}\}$ has two irreducible components. Moreover, for $n=3,4$, the variety $\{\ell^{n-2}q\mid q~\textrm{is a conic tangent to}~\ell\}$ coincides with the locus of forms of highest rank, which is $5$ for $n=3$ and $7$ for $n=4$ (this was already observed by B.~Segre, see \cite[\S 97]{Seg42}). By contrast, for $n=5$, Buczyński and Teitler \cite{BT} show that there exists a symmetric non-persistent polynomial of rank $10$, which exceeds the rank $9$ of $\ell^3 q$. Similarly, for $n=6$, De~Paris \cite{DeP} constructs further non-persistent sextic polynomials with strictly higher rank. Finally, the persistent polynomial $\ell^{n-2}q$ has rank $(n-1)(d-1)+1$, as shown in \cite[Proposition~3.7]{CG24}, in agreement with the nonsymmetric case discussed in \cite{GL24}. Theorem~1 in \cite{GL24} establishes that any persistent tensor in $\mathrm{Sym}^n \mathbbm{C}^d$ has tensor rank at least $(n-1)(d-1)+1$. 
Since the symmetric (Waring) rank is always greater than or equal to the tensor rank, the same lower bound applies to the symmetric rank as well. The first computation of the rank of $\ell q$ was carried out in \cite{CGV}, where certain estimates for the real rank were also obtained. More generally, estimating the real rank of a real persistent polynomial remains an interesting problem.
\end{remark}

\begin{theorem}  
Let $f\in\mathrm{Sym}^n\mathbbm{C}^4$ be persistent. If $n=3$ then $f=\lambda_1x_0^{2}x_3+\lambda_2x_0x_1x_2+\lambda_3x_1^3$ up to $\mathrm{GL}(4,\mathbbm{C})$.

If $n=4$ then $V(f)$ has a triple line. If $\ell=\{x_0=x_1=0\}$ this means that $f\in (x_0, x_1)^{3}$.
\end{theorem}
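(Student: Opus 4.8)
The plan is to handle $n=3$ and $n=4$ separately, treating the cubic case as the base of a Bertini-type argument for the quartic.

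For $n=3$ I would first apply Corollary \ref{coro:n=3}, which for $d=4$ turns persistence of the cubic $f$ into the single algebraic condition $\Hess(f)=\ell^{4}$ for a nonzero linear form $\ell$. Choosing coordinates with $\ell=x_0$ via $\mathrm{GL}(4,\mathbbm{C})$, the problem becomes the classification of cubics whose Hessian is a nonzero multiple of $x_0^{4}$. Equivalently, the symmetric $4\times4$ matrix of linear forms $\mathcal H_f$ satisfies $\det\mathcal H_f=x_0^4$, so its restriction to $\{x_0=0\}$ is identically singular and its corank jumps along that plane. The heart of the argument is to exploit this degenerate symmetric determinantal structure --- a kernel/flag analysis of this linear family of quadrics, which is precisely where I would invoke the classical results of B.~Segre (cf.\ Remark \ref{rem:bsegre} and \cite{Seg60}) --- to produce a one-parameter subgroup for which $f$ is a weight vector, forcing $f$ to be isobaric of weight $3$. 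Since the only degree-$3$ monomials of weight $3$ in $\mathbbm{C}[x_0,x_1,x_2,x_3]$ are $x_0^2x_3$, $x_0x_1x_2$ and $x_1^3$, this yields $f=\lambda_1x_0^2x_3+\lambda_2x_0x_1x_2+\lambda_3x_1^3$. For the converse, Lemma \ref{lem:weight}(ii) (or a one-line determinant computation giving $\Hess(f)=4\lambda_1^2\lambda_2^2\,x_0^4$) shows this form has Hessian a nonzero $4$th power exactly when $\lambda_1\lambda_2\neq0$; computing the singular locus of $V(f)$ then separates the two $\mathrm{SL}(4)$-orbits, the Chasles--Cayley ruled surface when $\lambda_3\neq0$ and the union of a smooth quadric with a tangent plane when $\lambda_3=0$.

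For $n=4$ I would use the recursive definition: $f_v=\partial f/\partial v$ is a persistent cubic for $v$ outside a hyperplane, so by the $n=3$ case each surface $V(f_v)$ is singular along a line $L_v$ (the double line of the persistent cubic). The next step is to show $L_v$ does not move with $v$. By Bertini's theorem the singular locus of the general member of the web of polar cubics $\{f_v\}_v$ is contained in its base locus $\mathrm{Bs}=\{x:\partial_i f(x)=0\ \forall i\}=\mathrm{Sing}\,V(f)$, which is a fixed subscheme; hence $L_v\subseteq\mathrm{Sing}\,V(f)$ for general $v$. Since the parameter space of $v$ is irreducible while a fixed $1$-dimensional scheme contains only finitely many lines, $L_v$ must equal a single line $L$ (the alternative, a genuinely moving family of lines, would force $\mathrm{Sing}\,V(f)$ to be a surface and $f$ to be non-reduced, incompatible with persistence). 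After normalizing $L=\{x_0=x_1=0\}$, the inclusion $L\subseteq\mathrm{Sing}\,V(f_v)$ reads $\partial f/\partial v\in I_L^{2}=(x_0,x_1)^2$, and by linearity in $v$ every partial $\partial_i f$ lies in $(x_0,x_1)^2$.

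Finally I would upgrade this to $f\in(x_0,x_1)^3$ by a grading argument: writing $f=\sum_{k=0}^{4}f_k$ with $f_k$ of degree $k$ in $(x_0,x_1)$, the vanishing of the $(x_0,x_1)$-degree $\leq1$ parts of $\partial_2 f,\partial_3 f$ forces $f_0=f_1=0$, and then the vanishing of the degree-$1$ part of $\partial_0 f,\partial_1 f$ forces $f_2=0$; hence $f=f_3+f_4\in(x_0,x_1)^3$, so $V(f)$ has a triple line. The hard part of the whole argument is the $n=3$ classification, i.e.\ passing from $\Hess(f)=x_0^4$ to the isobaric normal form through Segre's analysis of the degenerate Hessian; once the cubic case is in hand, the quartic case is a comparatively routine combination of Bertini's theorem, the Euler relation, and the elementary grading computation.
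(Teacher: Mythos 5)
Your $n=3$ argument follows the paper's route: reduce persistence to $\Hess(f)=\ell^4$ via Theorem~\ref{thm:main}(d) (equivalently Corollary~\ref{coro:n=3}) and defer the classification of cubic surfaces with Hessian $\ell^4$ to B.~Segre. Note, however, that the relevant reference is \cite[\S 95]{Seg42}, not \cite{Seg60}/Remark~\ref{rem:bsegre}, which concerns a different statement (quaternary forms whose Hessian is a square modulo $f$); since you are outsourcing the hard step to a classical source, pointing at the right one matters. The $n=4$ skeleton — Bertini forces the singular line of the general polar cubic into the base locus $\mathrm{Sing}\,V(f)$, hence it is fixed; then $\partial_i f\in(x_0,x_1)^2$ for all $i$, and a computation gives $f\in(x_0,x_1)^3$ — also matches the paper, and your explicit grading argument for the last step is a correct, if more computational, substitute for the paper's reduction to monomials.

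The genuine gap is in how you dispose of the non-reduced case. You rule out a moving family of lines $L_v$ by arguing it would force $\mathrm{Sing}\,V(f)$ to contain a surface, hence $f$ to be non-reduced, which you declare ``incompatible with persistence.'' That last claim is false: there is a non-reduced persistent quartic in four variables, namely $f=\ell^2q$ with $q$ a quadric tangent to the plane $V(\ell)$ (for instance $x_0^2(x_0x_3+x_1x_2)$, which is isobaric of weight $3$ and persistent by Proposition~\ref{prop:construct}). For such $f$ the conclusion of the theorem still holds — it has (two) triple lines, and indeed lies in $(x_0,x_1)^3$ in suitable coordinates — but your proof does not establish this; it incorrectly asserts the case cannot occur. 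The repair is to treat the non-reduced case separately at the outset, as the paper does: show that the only non-reduced persistent quartic is $\ell^2q$ with $q$ tangent to $\ell$ (using \cite[Prop.~2.7]{COCDR} and the fact that $q^2$ is not persistent), check directly that it has a triple line, and only then run the Bertini argument under the standing hypothesis that $f$ is reduced, which is exactly what makes the singular line of the general polar cubic independent of $v$.
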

\begin{proof}
In Ref. \cite[\S 95]{Seg42} it is proved that the only cubic surface with Hessian given by $\ell^4$, with $\ell$ a nonzero linear form, is $\mathrm{GL}(4,\mathbbm{C})$-equivalent to $f$ in the statement, which is called a Chasles-Cayley ruled cubic surface (Ref. \cite[pag. 141]{Seg42}) if $\lambda_3\neq 0$ or it splits in a smooth quadric and a tangent plane if $\lambda_3=0$.
By condition $(\textnormal{d})$ in Theorem \ref{thm:main}, this proves the result for $n=3$. 

Let $n=4$ and $f$ be persistent. The only nonreduced $f$ is given by $\ell^2q$, where $\ell$ is a linear form and $q$ is a quadric tangent to $\ell$ (see \cite[Prop. 2.7]{COCDR} and the fact that $q^2$ is not persistent.). This case has two triple lines, so we may assume $f$ reduced. If $\frac{\partial f}{\partial p}$ is persistent for $p\notin S$ then $\frac{\partial f}{\partial p}$ is a cubic form  which is singular on a line. 
By Bertini Theorem (and the fact that $f$ is reduced) such a line is the same for the general member of the linear system and $f$ itself is singular on this line, that we may assume $V(x_0,x_1)$.

Hence $\frac{\partial f}{\partial v}\in (x_0,x_1)^2$ for $v$ on a dense subset. By continuity we get that $\frac{\partial f}{\partial v}\in (x_0,x_1)^2$ $\forall v$. We claim that $f\in(x_0,x_1)^3$. Since $(x_0,x_1)^3$ is a monomial ideal, for this claim we may assume that $f$ is a monomial, and in this case the claim is obvious. This concludes the proof.
\end{proof}

\begin{remark}
For the Chasles-Cayley ruled surface ($n=3$ in the statement) the singular locus is the line $(x_0=x_1=0)$ with the embedded point $(0,0,0,1)$ so that the isobaric coordinates may be found geometrically from the singular locus.
\end{remark}

\begin{remark}
For $d=4$, in the closure of persistent cubics there are cones over plane cubics which have cusps or two double points.
\end{remark}

\begin{example}\label{exa:SolT}
Example~10~(b) in \cite{SolT} considers the polynomial $f = x_0x_2^3 + x_1x_2^2x_3 + x_3^4$. A direct computation shows that $\Hess(f) = 9x_2^8$. Hence, $f$ is persistent by condition~\textnormal{(a)} of Theorem~\ref{thm:main}. Note that $f$ does not possess any linear factor; therefore, it is not 
$\mathrm{GL}(4,\mathbbm{C})$–equivalent to a weight–$3$ polynomial, which would necessarily contain $x_0$ as a factor.

Example~14 in \cite{SolT} considers the polynomial $g = x_2^4 + x_0x_2^2x_3 + x_1x_2x_3^2 + x_3^4$. In this case, $\Hess(g) = 9(x_2x_3)^4$, so $g$ satisfies condition~\textnormal{(d)} of Theorem~\ref{thm:main}. However, $g$ is not persistent, since
\[
\Hess_x\!\Bigg(\sum_{i,j=0}^3 u_i v_j 
\frac{\partial^2 g}{\partial x_i \partial x_j}\Bigg)
= 16\big(u_3^2 v_2^2 + u_2u_3v_2v_3 + u_2^2v_3^2\big)^2
\]
is not a fourth power and thus fails to satisfy condition~\textnormal{(c)} of Theorem~\ref{thm:main}.
\end{example}
\section{Persistent Polynomials and Prehomogeneous Spaces: Examples and Insights}\label{sec.v}

\subsection{Persistent Cubic Polynomials are Homaloidal }

The results of this subsection are essentially due to Mammana and can be found in \cite[n.6]{Mam}.
We start by recalling two classical results. Although well known to experts, we include straightforward proofs for the sake of completeness.

\begin{proposition}\label{prop:contains_line}
Let $V$ be a linear system of quadrics in $\mathbbm{P}^r$ of affine dimension $r+1$, assumed to be base-point free. Fix a point $M\in\mathbbm{P}^r$ and a line $L\ni M$. Suppose that every quadric in $V$ containing $M$ is tangent to $L$ at $M$; equivalently, the subsystem of quadrics through $M$ has codimension one. Then, among the quadrics tangent to $L$ at $M$, there exists at least one quadric that is singular at $M$.
\end{proposition}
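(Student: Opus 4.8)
The plan is to convert each geometric condition in the statement into a linear condition on the gradient (the linear part at $M$) of a quadratic form, and then finish with a single rank-nullity count. First I would choose coordinates so that $M=[1:0:\cdots:0]=e_0$ and the line $L$ is spanned by $e_0$ and a nonzero direction $w=(0,w_1,\ldots,w_r)$. For a quadric $q\in V$ with symmetric matrix $A=(A_{ij})$, the relevant dictionary reads off immediately: $q$ passes through $M$ iff $A_{00}=q(M)=0$; its gradient at $M$ is $\nabla q(M)=2(A_{00},A_{10},\ldots,A_{r0})$; tangency of $q$ to $L$ at $M$ (for $q$ through $M$) is the single linear condition $\sum_{k\geq 1}A_{k0}w_k=0$, obtained by demanding that the restriction of $q$ to $L$ vanish to order $\geq 2$ at $M$; and $q$ is singular at $M$ precisely when $\nabla q(M)=0$, i.e. $A_{k0}=0$ for all $k$.

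Next I would introduce $W=\{q\in V:q(M)=0\}$, the subsystem of quadrics through $M$. By base-point-freeness (equivalently, the stated codimension-one assumption) the functional $q\mapsto q(M)$ is nonzero on $V$, so $\dim W=(r+1)-1=r$. On $W$ I consider the linear map $\psi\colon W\to\mathbbm{C}^r$ recording the remaining gradient entries, $\psi(q)=(A_{10}(q),\ldots,A_{r0}(q))$. Because $A_{00}=0$ already holds on $W$, a nonzero element of $\ker\psi$ is exactly a nonzero quadric of the system whose full gradient at $M$ vanishes, i.e. a quadric singular at $M$; being singular there, it is in particular tangent to $L$ at $M$. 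So the whole proposition reduces to showing $\ker\psi\neq 0$.

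The heart of the argument is then the dimension count powered by the tangency hypothesis. By assumption every $q\in W$ is tangent to $L$ at $M$, which says precisely that $\psi(q)$ lies in the hyperplane $\{y\in\mathbbm{C}^r:\sum_{k\geq 1}y_kw_k=0\}$; since $w\neq 0$ this hyperplane has dimension $r-1$, whence $\dim\psi(W)\leq r-1$. Combined with $\dim W=r$, rank-nullity gives $\dim\ker\psi\geq r-(r-1)=1$, producing the desired nonzero singular quadric in $W\subseteq V$ and completing the proof.

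I expect no genuine obstacle: once the geometry-to-gradient dictionary is in place the statement is a tight linear-algebra count. The only points demanding care are bookkeeping ones—using the codimension-one hypothesis exactly to pin down $\dim W=r$ (this is where the precise affine dimension $r+1$ of the system is essential, since a smaller system would leave room for $\ker\psi=0$), and confirming that a kernel element is a bona fide nonzero quadric rather than the zero form. If a slicker packaging is preferred, I would instead work with the full gradient map $\Phi\colon V\to\mathbbm{C}^{r+1}$, $\Phi(q)=\nabla q(M)$: the tangency hypothesis forces the tangency functional to be proportional to $q\mapsto q(M)$ on all of $V$, so $\operatorname{im}\Phi$ is confined to a single hyperplane, giving $\operatorname{rank}\Phi\leq r$ and hence $\ker\Phi\neq 0$ directly.
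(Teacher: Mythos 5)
Your proposal is correct and is essentially the paper's own argument: both normalize coordinates at $M$, translate ``through $M$,'' ``tangent to $L$ at $M$,'' and ``singular at $M$'' into linear conditions on the first row of the symmetric matrix, and observe that since tangency is automatic on the subsystem through $M$, singularity at $M$ imposes only $r-1$ further conditions on an $r$-dimensional space. The paper states this as a count of conditions while you package it as rank--nullity for the gradient map, but the content is identical.
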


\begin{proof}
After a suitable projective change of coordinates, we may assume $M = (1:0:\cdots:0) \in \mathbbm{P}^r$,
and $L = \{(s:t:0:\cdots:0) \mid (s,t) \in \mathbbm{P}^1\}$.
Let $A=(a_{ij})_{i, j=0,\ldots, r}$ be the symmetric matrix associated with a quadric of the system. The quadric contains $M$ if and only if $a_{00}=0$. The quadric is tangent to $L$ at $M$ if and only if $a_{00}=a_{01}=0$, so, by assumption, if a quadric of the system satisfies $a_{00}=0$ then it satisfies also $a_{01}=0$. The quadric is singular at $M$ if and only if $a_{0i}=0$ for all $i=0,\ldots, r$. Hence the condition of singularity at $M$, among the quadrics containing $M$, requires only $r-1$ linear conditions
$a_{0i}=0$ for all $i=2,\ldots, r$, since $a_{01}=0$ is already granted. This proves the result. 
\end{proof}

\begin{lemma}\label{lem:pencil}
A pencil of quadrics on $\mathbbm{P}^1$ with no base points has two distinct double points.
\end{lemma}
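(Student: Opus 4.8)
The plan is to translate the statement into the geometry of a line meeting the conic of perfect squares. Identify a binary quadratic form $Q=ax^2+2bxy+cy^2$ with its symmetric matrix, and with the point $[a:b:c]\in\mathbbm{P}^2$. A pencil is then a line $L\subset\mathbbm{P}^2$ spanned by two forms $Q_0,Q_1$, and a member is a \emph{double point} (a non-reduced length-two divisor on $\mathbbm{P}^1$) exactly when it is a perfect square, i.e.\ when its matrix is singular. The locus of singular matrices is the discriminant conic $C=\{b^2=ac\}$, which is smooth and is precisely the image of the Veronese parametrization $[p_0:p_1]\mapsto(p_0x+p_1y)^2$ of $\mathbbm{P}^1$. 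Hence the double points of the pencil form the scheme $L\cap C$. Since $C$ is smooth it contains no line, so $L\not\subset C$ and $L\cap C$ has length two; the two double points are \emph{distinct} precisely when $L$ is not tangent to $C$.

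It therefore suffices to show that, under the base-point-free hypothesis, $L$ is not tangent to $C$; equivalently, that tangency forces a base point. Here is the crux. For a square $\ell^2=(p_0x+p_1y)^2\in C$, whose double point is $p=[p_1:-p_0]\in\mathbbm{P}^1$, I claim the tangent line $T_{\ell^2}C$ equals the line of forms vanishing at $p$. This is a one-line check: the tangent to $\{b^2-ac=0\}$ at $(p_0^2,p_0p_1,p_1^2)$ has equation $-p_1^2a+2p_0p_1b-p_0^2c=0$, which (up to sign) is exactly the linear condition $Q(p)=0$. Consequently, if $L$ is tangent to $C$ at some $\ell^2$, then $L=T_{\ell^2}C$, so every member of $L$ vanishes at the double point $p$; that is, $p$ is a base point, contradicting the hypothesis. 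The contrapositive gives the lemma.

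The same mechanism can be run matricially, which I find transparent. A double root of the binary quadratic $\det(\lambda A_0+\mu A_1)$ yields a unique singular member $A_*=vv^{\!\top}$ of rank one; writing the pencil as $sA_*+tB$ and using $\det(sA_*+tB)=s^2\det A_*+st\,\operatorname{tr}\!\big(\operatorname{adj}(A_*)B\big)+t^2\det B$ (the first term vanishing since $\det A_*=0$), the double-root condition becomes $\operatorname{tr}\!\big(\operatorname{adj}(A_*)B\big)=0$. Since $\operatorname{adj}(vv^{\!\top})=v'v'^{\!\top}$ with $v'$ spanning $\ker A_*$, which is the double point $p$, this trace equals the value $Q_B(p)$. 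As $Q_{A_*}(p)=0$ holds automatically, $p$ lies on both $A_*$ and $B$, hence on every member of the pencil, i.e.\ is a base point---the same contradiction.

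The only genuine content is the identification in the second paragraph (equivalently the adjugate computation in the third): that the tangent direction to the square $\ell^2$ inside $C$ records exactly the vanishing of a general member at the double point of $\ell^2$. Everything else is formal, namely Bézout for a line and a smooth conic together with the irreducibility of $C$, which excludes $L\subset C$ (this would make every member singular). I expect this tangency-versus-base-point dictionary to be the one step worth spelling out; I should also stress that the base-point-free hypothesis is indispensable, since the pencil of all quadrics through a fixed point is tangent to $C$ and carries a single double point of multiplicity two.
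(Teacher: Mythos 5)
Your proof is correct, and it takes a genuinely different route from the paper's. The paper argues by brute-force elimination: it writes the pencil as $\lambda(x_0-ax_1)(x_0-bx_1)+\mu(x_0-cx_1)(x_0-dx_1)$, computes the discriminant of the general member as a binary quadratic in $(\lambda,\mu)$, and then shows that the discriminant of \emph{that} quadratic factors as $16(b-d)(b-c)(a-d)(a-c)$, which vanishes exactly when the pencil has a base point. You instead work in $\mathbbm{P}^2=\mathbbm{P}(\mathrm{Sym}^2\mathbbm{C}^2)$: the pencil is a line $L$, the squares form the smooth discriminant conic $C$ (the Veronese image of $\mathbbm{P}^1$), Bézout gives $L\cap C$ of length two, and the only failure mode is tangency; your key observation --- that the tangent line to $C$ at $\ell^2$ is precisely the hyperplane of forms vanishing at the root of $\ell$ (verified both by the gradient of $b^2-ac$ and by the adjugate identity $\operatorname{tr}(\operatorname{adj}(vv^{\top})B)=Q_B(v')$ with $v'$ spanning $\ker vv^{\top}$) --- turns tangency into the existence of a base point. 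Both arguments are complete; note that you carefully dispose of the degenerate case $L\subset C$, and that two distinct points of $L\cap C$ do give two distinct ramification points of $\mathbbm{P}^1$ because the Veronese map is injective, which is the form in which the lemma is invoked later. What each buys: the paper's double-discriminant computation is elementary, self-contained, and its explicit factorization exhibits the equivalence ``tangency $\Leftrightarrow$ base point'' in both directions at once; your argument isolates the geometric content (the classical identification of $T_{\ell^2}C$ with the forms through the double point), proves only the direction actually needed (tangency $\Rightarrow$ base point), and generalizes more readily, e.g.\ to linear systems meeting the discriminant of quadrics in higher dimension --- indeed it is close in spirit to the paper's own Proposition \ref{prop:contains_line}.
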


\begin{proof}
Let $(x_0, x_1)$ be homogeneous coordinates on $\mathbbm{P}^1$.
We may assume that the pencil is $\lambda(x_0-ax_1)(x_0-bx_1)+\mu(x_0-cx_1)(x_0-dx_1)$
for some $a, b, c, d\in\mathbbm{C}$ and parameters $(\lambda, \mu)$. Expanding this expression we get $(\lambda+\mu)x_0^2-\left(\lambda(b+a)+\mu(d+c)\right)x_0x_1+(\lambda ab+\mu cd)x_1^2$, which has discriminant 
$\left(\lambda(b+a)+\mu(d+c)\right)^2-4(\lambda+\mu)(\lambda ab+\mu cd)=\lambda^2((b+a)^2-4ab)+\lambda\mu(2(b+a)(d+c)-4ab-4cd)+\mu^2((d+c)^2-4cd)$. The two roots in $(\lambda,\mu)$ give the two pencils with double points. The last equation in turn has discriminant $(2(b+a)(d+c)-4ab-4cd)^2-4((b+a)^2-4ab)((d+c)^2-4cd)=16(b-d)(b-c)(a-d)(a-c)$, which vanishes if and only if the pencil has a base point.
\end{proof}

\begin{theorem}\cite[n.6]{Mam}
Let $f$ be a cubic polynomial such that $\Hess(f)$ is the power of a nonzero linear form. Then $f$ is homaloidal. In particular, any persistent cubic polynomial is homaloidal.
\end{theorem}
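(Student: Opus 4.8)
The plan is to prove homaloidality by showing that the polar map $\phi=\nabla f\colon\mathbbm{P}^{d-1}\dashrightarrow(\mathbbm{P}^{d-1})^\vee$ has topological degree one. Since $\Hess(f)=\ell^{d}\neq 0$, the polynomial $f$ is automatically concise (a cone would force a zero row in $\mathcal{H}_f$, hence $\Hess(f)=0$), and after a $\mathrm{GL}(d)$ change we may take $\ell=x_0$. The first observation I would record is that, because $f$ is cubic, $\phi$ is precisely the rational map defined by the linear system $W=\langle\partial_0 f,\ldots,\partial_{d-1}f\rangle$ of first-polar quadrics in $\mathbbm{P}^{d-1}$; this system has affine dimension $d=(d-1)+1$, exactly the setup of Proposition~\ref{prop:contains_line} with $r=d-1$. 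The key identity is that the polar quadric $Q_b=\sum_i b_i\,\partial_i f$ has symmetric matrix $\mathcal{H}_f(b)$, so that $Q_b$ is a singular quadric iff $\det\mathcal{H}_f(b)=\Hess(f)(b)=\ell(b)^d=0$, i.e.\ iff $b\in V(\ell)$. Thus every polar quadric whose parameter lies off the hyperplane $V(\ell)$ is smooth, and since the Jacobian of $\phi$ is $\Hess(f)=\ell^d$, the map $\phi$ is generically étale.

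\textbf{Fiber analysis via the two lemmas.} Fix a general $M$ and let $[a]=\nabla f(M)$. Since $Q_b(M)=(\mathrm{const})\,(a\cdot b)$, the polar quadrics passing through $M$ form exactly the codimension-one subsystem $W_a=\{Q_b : b\perp a\}$, and the fiber $\phi^{-1}([a])$ is the base locus of $W_a$ away from $\mathrm{Sing}\,V(f)$. I would argue by contradiction: suppose $\deg\phi\ge 2$, so the general fiber contains a second point $x_1\neq M$. Restricting the net $W$ to the line $L=\overline{M x_1}$, all members of $W_a$ vanish at both $M$ and $x_1$, hence restrict to multiples of the single binary quadric with roots $M,x_1$; adjoining one further polar produces a \emph{base-point-free} pencil of binary quadrics on $L$. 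Lemma~\ref{lem:pencil} then gives two distinct double points of this pencil, i.e.\ two polar quadrics tangent to $L$. The leverage comes from the discriminant being the pure power $\ell^d$: along a general pencil in $W$ the $d$ singular members all collapse onto the single hyperplane $V(\ell)$, and the aim is to use this collision together with the existence of $x_1$ to force the tangency hypothesis of Proposition~\ref{prop:contains_line} at $M$ (every polar quadric through $M$ tangent to a common line at $M$). That proposition would then yield a polar quadric \emph{singular at} $M$, hence a member $Q_b$ with $b\in V(\ell)$ but whose vertex is the general point $M$.

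\textbf{Contradiction and conclusion.} Such a quadric contradicts the rigidity of $\Hess(f)=\ell^d$: the singular polar quadrics $Q_b$ ($b\in V(\ell)$) have vertices $\ker\mathcal{H}_f(b)$ constrained by the linear matrix pencil $b\mapsto\mathcal{H}_f(b)$, and cannot be singular at a point as general as $M$. Hence $\deg\phi=1$ and $f$ is homaloidal. The final assertion follows immediately: by Corollary~\ref{coro:n=3} a persistent cubic satisfies $\Hess(f)=\ell^{d}$, so it falls under the hypothesis and is homaloidal.

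\textbf{Main obstacle.} I expect the crux to be two intertwined points. First, Proposition~\ref{prop:contains_line} requires the polar system to be base-point-free, so one must control $\mathrm{Base}(W)=\mathrm{Sing}\,V(f)$ and reduce to that situation. Second, and hardest, is verifying the tangency hypothesis: a direct computation shows that for a \emph{general} $M$ the quadrics of $W$ through $M$ do \emph{not} share a common tangent line at $M$ (their tangent hyperplanes $\{\,z : \mathcal{H}_f(M)b\cdot z=0\,\}$ sweep a full family as $b$ ranges over $a^\perp$). Consequently one cannot apply Proposition~\ref{prop:contains_line} naively; the argument must genuinely exploit the presence of the second fiber point $x_1$ and the multiplicity-$d$ degeneracy of the discriminant to \emph{manufacture} the required common tangency. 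Converting the statement ``the discriminant of the polar net is a pure $d$-th power of a linear form'' into ``the polars through $M$ are tangent to a common line, forcing a vertex at $M$'' is where the real work of Mammana's argument lies.
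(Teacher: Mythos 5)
There is a genuine gap, and you have in fact located it yourself in your ``main obstacle'' paragraph: your plan is to verify the tangency hypothesis of Proposition~\ref{prop:contains_line} at the \emph{general} point $M$, and you correctly observe that for general $M$ the polar quadrics through $M$ do \emph{not} share a common tangent line, so this can never work. The missing idea is that Proposition~\ref{prop:contains_line} must be applied not at $M$ but at the two \emph{double points} $M',N'$ of the pencil on $L$ supplied by Lemma~\ref{lem:pencil}. At such a point $M'$ the hypothesis holds automatically by construction: a quadric of the net $W$ passing through $M'$ lies in $W_0+\mathbbm{C}\cdot Q'$, where $W_0$ (codimension two) consists of the quadrics containing the whole line $L$ and $Q'$ is the unique pencil member vanishing at $M'$, which by the very definition of a double point has a double root there; either way the quadric is tangent to $L$ at $M'$. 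Proposition~\ref{prop:contains_line} then produces a quadric $Q_b$ singular at $M'$, which (via $\mathcal{H}_f(b)M'=\mathcal{H}_f(M')b=0$) forces $\Hess(f)(M')=0$, i.e.\ $M'\in V(\ell)$; the same at $N'$. Two distinct points of $L$ on the hyperplane $V(\ell)$ force $L\subset V(\ell)$, hence $M\in V(\ell)$, contradicting the generality of $M$. This is exactly where the pure-power hypothesis $\Hess(f)=\ell^{d}$ enters, and it is the step your proposal leaves unproved.

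Your ``contradiction and conclusion'' paragraph is also weaker than needed: you argue that a polar quadric ``cannot be singular at a point as general as $M$'' by an informal rigidity appeal, whereas the clean statement is simply that singularity of $Q_b$ at a point forces $\Hess(f)$ to vanish there. Everything else in your setup (the net of polar quadrics, the matrix $\mathcal{H}_f(b)$, the codimension-one subsystem through $M$, the restriction to the line joining two fiber points, the use of Lemma~\ref{lem:pencil}, and the reduction of the persistent case via Corollary~\ref{coro:n=3}) matches the paper's argument; only the decisive application of Proposition~\ref{prop:contains_line} is misplaced, and as written the proof does not close.
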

\begin{proof}
In the case of two variables the statement can be easily checked directly, hence we may assume there are at least three variables.
Consider the vector space spanned by the partials $V=\langle f_0,\ldots, f_{d-1}\rangle$,  by the assumption that the Hessian is nonzero we get $\dim V=d$.
This space defines a rational map $\nabla f\colon\mathbbm{P}^{d-1}\to\mathbbm{P}^{d-1}$, which is dominant since its Jacobian coincides with the Hessian of $f$.
The linear system $V$ of quadrics has base points exactly on the singular locus $Sing\,V(f)$.Assume that for general $P\in\mathbbm{P}^{d-1}$ the fiber $(\nabla f)^{-1}(\nabla f(P))$ contains another point $Q\neq P$. Note that the general fiber is zero dimensional, otherwise the Hessian vanishes identically. We may assume that $Q$ is not in the base locus of $V$, otherwise $f$ is homaloidal. Restrict the polar map to the line $PQ$. Since a special member of $V$ cuts the line in two distinct points $P$, $Q$, by semicontinuity the general member of $V$ cuts the line in two distinct points. Note that the members of the system containing $P$ and another general point $P'\in PQ$ contains the three points $P$, $Q$ and $P'$, hence the whole line. It follows there  is a codimension two subspace $V_0\subset V$ such that the quadrics in $V_0$ contain the line $PQ$. Hence we get by restriction a map $F\colon PQ\to\mathbbm{P}((V/V_0)^\vee)=\mathbbm{P}^1$
defined by $R\mapsto \{H\subset V/V_0 \mid H(R)=0\}$. This is a morphism with quadratic entries and we get a pencil of quadrics on $\mathbbm{P}^1$ with no base points. By Lemma \ref{lem:pencil},
the pencil has two distinct points $M$, $N$,
where it ramifies. At $M$, by Proposition \ref{prop:contains_line}, there is one quadric singular at $M$. Hence $\Hess(f)$ vanishes at $M$. The same argument shows that $\Hess(f)$ vanishes at $N$. But this contradicts the assumption that  $\Hess(f)$ is the power of a nonzero linear form. 
This concludes the proof.
\end{proof}

\begin{remark}\label{rem:ST} The classification of homaloidal cubic surfaces has appeared in \cite[\S 5.1]{ST}, as application of results from \cite{DP, Huh}. The result is that the homaloidal cubic surfaces are the two persistent cubic surfaces found and the additional case of the union of a quadratic cone with a general hyperplane,
like $(x_0x_1+x_2^2)(x_0+x_3)$.
This last polynomial has Hessian equal to
$4(x_0x_1+x_2^2)(x_0+x_3)^2$, hence it is not persistent according to Theorem \ref{thm:main} (d).
\end{remark}

\subsection{Examples of Persistent Polynomials}
This section and the next one owe a lot to \S 4  of Ref. \cite{CRS}. Next Proposition is our main construction of examples of persistent tensors.

\begin{proposition}\label{prop:construct}
If $f\in\text{Sym}^n\mathbbm{C}^d$ is a general isobaric polynomial of weight $d-1$, with $n\geq 2$,
then $f$ is persistent. 
\end{proposition}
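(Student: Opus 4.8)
The plan is to reduce the statement directly to the sufficiency implication $(\textnormal{a}) \Longrightarrow (\textnormal{b})$ of Theorem~\ref{thm:main}, exploiting the rigid shape that the Hessian is forced to take for isobaric polynomials of weight $d-1$. Concretely, I would show that a general weight-$(d-1)$ isobaric $f$ satisfies condition~$(\textnormal{a})$, and then persistence follows for free.

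First I would apply Lemma~\ref{lem:weight}(ii) with $\delta = d-1$. Since the Hessian of an isobaric polynomial of weight $\delta$ is itself isobaric of weight $d(\delta - d + 1)$, taking $\delta = d-1$ makes this weight equal to $0$; as the only degree-$d(n-2)$ monomial of weight $0$ is $x_0^{d(n-2)}$, we necessarily have $\Hess(f) = \lambda\, x_0^{d(n-2)}$ for some scalar $\lambda$. The key input from Lemma~\ref{lem:weight}(ii) is that $\lambda$ is not identically zero in the coefficients of $f$, so the condition $\lambda \neq 0$ cuts out a nonempty Zariski-open subset of the space of weight-$(d-1)$ isobaric polynomials, on which a general $f$ lives. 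Rescaling, for $n \geq 3$ I would set $\ell = \lambda^{1/(d(n-2))} x_0$ (any fixed root), giving $\Hess(f) = \ell^{d(n-2)}$ with $\ell$ a nonzero linear form, which is exactly condition~$(\textnormal{a})$; for $n = 2$ the Hessian is the nonzero constant $\lambda$, i.e. nonsingularity of the quadratic form, which is persistence by definition. Invoking $(\textnormal{a}) \Longrightarrow (\textnormal{b})$ of Theorem~\ref{thm:main} then concludes that $f$ is persistent.

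The main (indeed the only) obstacle is to certify that $\lambda$ is generically nonzero, i.e. that $\{\lambda = 0\}$ is a proper subvariety. This is already encapsulated in Lemma~\ref{lem:weight}(ii), whose argument exploits the (anti-)triangular structure of the Hessian matrix of a weight-$(d-1)$ isobaric polynomial; to make the open set manifestly nonempty it suffices to exhibit a single explicit such $f$ with $\lambda \neq 0$, for instance one of the monomial normal forms of Theorem~\ref{thm:small_dim} or the pattern displayed in Example~\ref{exa:weight4}. With that genericity in hand, no further computation is required.
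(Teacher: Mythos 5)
Your proof is correct and follows exactly the paper's argument: the paper's proof is the one-line observation that Lemma~\ref{lem:weight} combined with the implication $(\textnormal{a}) \Longrightarrow (\textnormal{b})$ of Theorem~\ref{thm:main} suffices. Your additional remarks on the genericity of $\lambda$ and the separate treatment of $n=2$ simply make explicit what the paper leaves implicit.
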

\begin{proof}
It is enough to combine Lemma \ref{lem:weight} with the implication $(\textnormal{a})$ $\Longrightarrow$ $(\textnormal{b})$ of Theorem \ref{thm:main}.
\end{proof}

Note that Theorem \ref{thm:small_dim} shows that
the construction in Proposition \ref{prop:construct} exhausts all the examples for $d\leq 4$.

The examples $\mathcal{L}(d,n)$ and $\mathcal{M}(d,n)$ in Ref.~\cite[(29)-(30)]{GL24} have indeed weight $d-1$.

We get examples of irreducible persistent polynomials on any $\mathbbm{P}^{d-1}$ with $d\geq 3$.

\begin{proposition}
Let $f$ be a irreducible persistent polynomial. Then the dual variety $V(f)^\vee$ is a hypersurface, equivalently the variety $V(f)$ is not dual defective.
\end{proposition}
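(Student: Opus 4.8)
The plan is to reduce the statement to a differential property of the polar map $\nabla f$ and then exploit the strong factorization of the Hessian guaranteed by persistence. Since $f$ is persistent, the implication $(\textnormal{b})\Rightarrow(\textnormal{d})$ of Theorem~\ref{thm:main} yields $\Hess(f)=g^{d}$ for a nonzero homogeneous polynomial $g$ of degree $n-2$. In particular $\det\mathcal{H}_f=g^{d}\not\equiv 0$, so the Hessian matrix $\mathcal{H}_f(p)$ is invertible at every point $p$ outside the proper hypersurface $V(g)$. This nonvanishing is exactly the input that will force the Gauss map to be nondegenerate.

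Next I would translate ``$V(f)^\vee$ is a hypersurface'' into a statement about the Gauss map. By definition $V(f)^\vee$ is the closure of the image of the Gauss map, which is the restriction of the polar map $\nabla f$ to $V(f)$. Since $\dim V(f)=d-2$, it suffices to show that this Gauss map is generically finite onto its image, equivalently that its differential has rank $d-2$ (is injective) at a general point $p\in V(f)$; by generic smoothness over $\mathbbm{C}$ this rank computes $\dim V(f)^\vee$.

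The key computation is that the differential of the affine gradient map $x\mapsto\nabla f(x)$ is exactly $\mathcal{H}_f$, and by the Euler relation $\mathcal{H}_f(p)\,p=(n-1)\,\nabla f(p)$, so the radial direction $\langle p\rangle$ is carried to $\langle \nabla f(p)\rangle$. Hence, whenever $\mathcal{H}_f(p)$ is invertible, it descends to an isomorphism of projective tangent spaces $T_{[p]}\mathbbm{P}^{d-1}\to T_{[\nabla f(p)]}(\mathbbm{P}^{d-1})^\vee$; restricting an isomorphism to the subspace $T_{[p]}V(f)$ keeps it injective, so the Gauss map is an immersion at any such smooth $p$, giving $\dim V(f)^\vee=\dim V(f)=d-2$.

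It remains to produce a general $p\in V(f)$ that is simultaneously a smooth point and satisfies $g(p)\neq 0$, and here the irreducibility hypothesis is essential: $V(f)$ is then irreducible of dimension $d-2$, and since $\deg g=n-2<n=\deg f$ the irreducible $f$ cannot divide $g$, so $V(f)\not\subset V(g)$. Thus $g$ is nonzero on a dense open subset of $V(f)$, which meets the smooth locus, and at such $p$ the argument above applies and concludes that $V(f)^\vee$ is a hypersurface, i.e.\ $V(f)$ is not dual defective. I expect the main obstacle to be the passage from the affine Jacobian $\mathcal{H}_f$ to the differential of the \emph{projectivized} polar map: one must use the Euler relation to check that the radial directions match, so that an invertible $\mathcal{H}_f(p)$ genuinely induces an isomorphism on projective tangent spaces rather than merely a generically finite map; the only other point requiring care is the degree argument ensuring $V(f)\not\subset V(g)$, which is precisely where irreducibility of $f$ enters.
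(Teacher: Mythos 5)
Your proof is correct, but it follows a genuinely different route from the paper's. The paper disposes of the statement in two lines by quoting Segre's criterion (\cite[Lemma 7.2.7]{Ru}): $V(f)^\vee$ is a hypersurface if and only if $f$ does not divide $\Hess(f)$; combined with condition $(\textnormal{d})$ of Theorem \ref{thm:main}, which gives $\Hess(f)=g^d$ with $\deg g = n-2 < n$, irreducibility of $f$ forces $f\nmid g$ and hence $f\nmid g^d$. You use exactly the same input from Theorem \ref{thm:main} and the same irreducibility-plus-degree argument to conclude that $g$ (hence $\Hess(f)$) does not vanish identically on $V(f)$, but instead of citing Segre's formula you reprove the relevant direction of it: you identify the differential of the polar map with $\mathcal{H}_f$, use the Euler relation $\mathcal{H}_f(p)\,p=(n-1)\nabla f(p)$ to see that an invertible $\mathcal{H}_f(p)$ descends to an isomorphism of projective tangent spaces, and restrict to $T_{[p]}V(f)$ to get that the Gauss map is an immersion at a general point, whence $\dim V(f)^\vee = d-2$. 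The trade-off is clear: the paper's argument is shorter and leans on a standard reference, while yours is self-contained and makes visible the mechanism by which nonvanishing of the Hessian along $V(f)$ forces the dual to be a hypersurface. Your attention to the two delicate points --- the compatibility of radial directions needed to projectivize $\mathcal{H}_f$, and the need for a point that is simultaneously smooth on $V(f)$ and outside $V(g)$ (both guaranteed by irreducibility) --- is exactly right, and no gap remains.
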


\begin{proof}
By Segre formula (see \cite[Lemma 7.2.7]{Ru}) $V(f)^\vee$ is a hypersurface if and only if $f$ does not divide $\Hess(f)$. $f$ cannot divide $\Hess(f)$ by condition $(\textnormal{d})$ of Theorem \ref{thm:main}, by degree reasons and the assumption of irreducibility.
\end{proof}

\subsection{Prehomogeneous Spaces and Perazzo Example Revisited}\

\begin{definition}
Let $V$ be a complex vector space of dimension $d$ endowed with a $\mathtt{G}$-module structure, where $\mathtt{G}$ is a linear group, not necessarily reductive.  
The group $\mathrm{GL}(V)$ acts on polynomials $f$ on $V^\vee$ via $(g \cdot f)(x) = f(g^{-1} x), \, g \in \mathrm{GL}(V), \, x \in V^\vee$. A variety $\mathfrak{X} \subset \mathbbm{P}(V)$ that is $\mathtt{G}$-invariant has an induced $\mathtt{G}$-action on its homogeneous coordinate ring. If the $\mathtt{G}$-action has a dense orbit  $\mathfrak{X} \setminus V(f)$, the variety $\mathfrak{X}$ is called a \emph{prehomogeneous space}.
\end{definition}

An element $f$ of the homogeneous coordinate ring of $\mathfrak{X}$ is called a $\mathtt{G}$-semi-invariant if there exists a character 
$\chi \colon \mathtt{G} \to \mathbbm{C}^*$ such that $g \cdot f = \chi(g) f \text{ for all } g \in \mathtt{G}$. The set of all $\mathtt{G}$-semi-invariants forms a subring, called the ring of semi-invariants.  

If $\mathfrak{X}$ is prehomogeneous with dense $\mathtt{G}$-orbit $\mathfrak{X} \setminus V(f)$, and $f = f_1 \cdots f_k$ is its decomposition into irreducible factors, then the factors $f_i$ generate the ring of $\mathtt{G}$-semi-invariants on $X$.  
Moreover, the Hessian of a semi-invariant is again a semi-invariant. In particular, if $\mathfrak{X} = \mathbbm{P}^{d-1}$, it follows that
\begin{equation}
\Hess(f) = c \, f_1^{n_1} \cdots f_k^{n_k}\,, \quad \text{for some}~~c\in\mathbbm{C},\, n_i\in\mathbbm{N}\,.
\end{equation}

Let $\mathbbm{P}^2 = \mathrm{SL}(3)/\mathtt{P}$, where $\mathtt{P}$ is the parabolic subgroup of linear transformations that fix a point $x$, namely,
\begin{equation}\label{eq:parabolicP}
\mathtt{P} = \begin{pmatrix}
a_{00} & a_{01} & a_{02} \\
0 & a_{11} & a_{12} \\
0 & a_{21} & a_{22}
\end{pmatrix} \subset \mathrm{SL}(3)\,,
\end{equation}
if $x=\begin{pmatrix}1&0&0\end{pmatrix}^{\rm T}$ with ${\rm T}$ denoting transposition. We emphasize that $\mathtt{P}$ is not reductive. For instance, $\mathtt{P}$ acts on $\mathbbm{C}^3$ by left multiplication, and $\langle e_1 \rangle \subset \mathbbm{C}^3$ is a $\mathtt{P}$-submodule without any direct summand; that is, for any subspace $W$ such that $\langle e_1 \rangle \oplus W = \mathbbm{C}^3$, the subspace $W$ is not $\mathtt{P}$-invariant. In fact, examples such as the $\mathbbm{P}^4$ described in the next Proposition \ref{prop:P4prehom} fall outside the Sato–Kimura classification of prehomogeneous spaces, which applies only to reductive groups \cite{SK}. 
 
\begin{proposition}\label{prop:P4prehom}
$\mathtt{P}$ acts with a dense orbit on $\mathbbm{P}^4=\mathbbm{P}\big(H^0(I_x(2))\big)$.
The complement of the dense orbit is the variety of singular conics through $x$,
which is projectively equivalent to the cubic Perazzo hypersurface of Example \ref{exa:weight4}.
\end{proposition}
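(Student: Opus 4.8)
The plan is to establish the claimed dense orbit by a direct dimension count combined with an explicit identification of the exceptional locus. First I would set up coordinates: the space $H^0(I_x(2))$ of conics through the point $x=(1:0:0)$ consists of those quadratic forms whose coefficient of $x_0^2$ vanishes, namely forms
\[
q = a_{01}x_0x_1 + a_{02}x_0x_2 + a_{11}x_1^2 + a_{12}x_1x_2 + a_{22}x_2^2,
\]
which is a five-dimensional vector space, so $\mathbbm{P}\big(H^0(I_x(2))\big)=\mathbbm{P}^4$ as asserted. The group $\mathtt{P}$ from \eqref{eq:parabolicP} acts on these conics via $(g\cdot q)(x)=q(g^{-1}x)$; since $\mathtt{P}$ fixes $x$, it indeed preserves the condition of passing through $x$, so the action is well defined on $\mathbbm{P}^4$.

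Next I would compute $\dim\mathtt{P}$: the upper-triangular block form subject to $\det=1$ gives $\dim\mathtt{P}=8-1=7$ (one entry $a_{00}$, the $2\times 2$ block with $4$ entries, the two entries $a_{01},a_{02}$, minus one for the determinant constraint). Since $\dim\mathbbm{P}^4=4$, there is enough room for a dense orbit, and the key is to exhibit a single conic whose $\mathtt{P}$-orbit is open. I would take a smooth conic through $x$, say $q_0=x_0x_1+x_2^2$ (smooth, passing through $x$, and meeting the fixed point transversally in a controlled way), and argue that its stabilizer in $\mathtt{P}$ has the expected dimension $7-4=3$, so that the orbit is four-dimensional, hence dense. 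This can be verified either by a tangent-space computation of the stabilizer Lie algebra or by checking that $\mathtt{P}$ acts transitively on the smooth conics through $x$ (using that $\mathrm{SL}(3)$ acts transitively on smooth conics, and restricting the flag structure to those fixing $x$).

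The identification of the complement is the geometric heart. The dense orbit $\mathbbm{P}^4\setminus V(f)$ should consist of the smooth conics through $x$, so its complement $V(f)$ is the locus of \emph{singular} conics through $x$. The discriminant of a conic is a cubic in its coefficients, and restricted to $H^0(I_x(2))$ it becomes a cubic polynomial $f$ in the five variables $a_{01},a_{02},a_{11},a_{12},a_{22}$; I would compute the $3\times 3$ symmetric determinant with the top-left entry set to zero,
\[
\det\begin{pmatrix} 0 & \tfrac12 a_{01} & \tfrac12 a_{02}\\ \tfrac12 a_{01} & a_{11} & \tfrac12 a_{12}\\ \tfrac12 a_{02} & \tfrac12 a_{12} & a_{22}\end{pmatrix},
\]
and observe that expanding along the first row yields a Perazzo-type cubic of the form $a_{01}(a_{02}a_{12}-a_{01}a_{22})+\text{(analogous term)}$, which is linear in $a_{11},a_{22}$ and quadratic in $a_{01},a_{02},a_{12}$. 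Matching this against the Perazzo cubic $\lambda_1x_0^2x_4+\lambda_2x_0x_1x_3+\lambda_4x_1^2x_2$ (the $\lambda_3=0$ specialization from Example~\ref{exa:weight4}) via a linear change of coordinates gives the projective equivalence.

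The main obstacle I expect is the explicit matching of the discriminant cubic with the stated Perazzo normal form: one must exhibit the precise linear substitution and verify the coefficients agree, and simultaneously confirm that the exceptional cubic is irreducible (so that it is genuinely the boundary of a single dense orbit rather than a union of smaller orbits). A clean way to finesse both points at once is to invoke the semi-invariant structure established earlier in this subsection: since $\mathtt{P}$ acts with the claimed dense orbit, its complement must be cut out by a semi-invariant, and the only $\mathtt{P}$-invariant cubic hypersurface in $\mathbbm{P}^4$ supported on singular conics through $x$ is the discriminant, which is then forced to be the Perazzo cubic by the equivariance of the Hessian (Lemma~\ref{lem:weight} shows this cubic has weight $4$, matching Perazzo's example). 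This reduces the computation to the single explicit determinant above together with the observation that Perazzo's cubic is irreducible and has vanishing Hessian.
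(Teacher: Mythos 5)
Your argument is correct in substance and reaches both conclusions of the Proposition, but the transitivity step is handled by a genuinely different route than the paper's. The paper works directly with the symmetric matrix $C$ of a conic and normalizes it by explicit congruence $C\mapsto p^{\rm T}Cp$ with $p\in\mathtt{P}$: first clearing the off-diagonal entries of the first row and column, then reducing the south-east $2\times 2$ block to a scalar matrix, thereby exhibiting the open orbit by hand. You instead deduce transitivity of $\mathtt{P}=\mathrm{Stab}_{\mathrm{SL}(3)}(x)$ on smooth conics through $x$ from the transitivity of $\mathrm{SL}(3)$ on pairs (smooth conic, point on it), i.e., from the fact that the stabilizer of a smooth conic, isomorphic to $\mathrm{PGL}(2)$ acting on the conic as $\mathbbm{P}^1$, is transitive on its points. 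This is cleaner and more conceptual, and it yields exactly what is needed: the smooth conics through $x$ form a single orbit, so the complement of the dense orbit is precisely the discriminant locus, whose equation is the bordered symmetric determinant you write down; after relabelling the five coefficients this is $\lambda_1x_0^2x_4+\lambda_2x_0x_1x_3+\lambda_4x_1^2x_2$, and the absence of the $x_0x_2^2$-term is exactly what identifies it with Perazzo's cubic rather than a general isobaric weight-$4$ cubic. Two small repairs: $\dim\mathtt{P}=7-1=6$, not $8-1=7$ (your own enumeration gives $1+4+2=7$ entries before imposing $\det=1$), so the stabilizer of a pointed smooth conic is $2$-dimensional; this does not affect the transitivity argument, though it would matter if you relied only on the ``expected dimension'' count, which in any case assumes rather than proves that the stabilizer is not larger than expected. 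Finally, your closing appeal to semi-invariance of the boundary is circular as phrased (it presupposes the dense orbit you are constructing) and should be dropped; the direct determinant computation plus irreducibility of the Perazzo cubic already closes the argument.
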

\begin{proof}
Every conic not passing through $x$ corresponds to a $3\times 3$ symmetric matrix 
$C = (c_{ij})_{i,j=0,1,2}$ with $c_{00} \neq 0$.  
The element
\begin{equation}
p_0 = \begin{pmatrix}
1 & -\frac{c_{01}}{c_{00}} & -\frac{c_{02}}{c_{00}} \\
0 & 1 & 0 \\
0 & 0 & 1
\end{pmatrix} \in \mathtt{P}\,,
\end{equation}
satisfies
\begin{equation}
p_0^{\rm T} C p_0 = \begin{pmatrix}
c_{00} & 0 & 0 \\
0 & * & * \\
0 & * & *
\end{pmatrix}.
\end{equation}

Assume now that $C$ is nonsingular. It is well known that one can reduce $C$ to a scalar multiple of the identity by acting with the $2\times 2$ SE-minor of elements in $\mathtt{P}$. This shows that the set of nonsingular conics through $x$ forms a dense orbit.  

Finally, consider the determinant
\begin{equation}\label{eq:perazzo}
\begin{vmatrix}
0 & c_{01} & c_{11} \\
c_{01} & c_{02} & c_{12} \\
c_{11} & c_{12} & c_{22}
\end{vmatrix}.
\end{equation}
Its variety is the complement of the dense orbit, it is left invariant by the action of $\mathtt{P}$ and it is projectively equivalent to the Perazzo hypersurface (see Example \ref{exa:weight4} and \cite[\S 7.4]{Ru} for generalizations).
\end{proof}

We now aim to determine all the orbits of the action of $\mathtt{P}$ on $\mathbbm{P}^4$ described in Proposition \ref{prop:P4prehom}, as well as those for the dual action.   The dual action can be interpreted as an action on hyperplanes, so the problem reduces to classifying the possible hyperplane sections of the Perazzo hypersurface. Although this is well known, we did not find an exposition of it in the literature in the context of group actions.

It is classically known that, up to the $\mathrm{SL}(4)$-action, there are two families of irreducible cubic surfaces in $\mathbbm{P}^3$ with infinitely many singular points, namely:
\begin{enumerate}
\item {the surface $V(x_0^2x_2+x_1^2x_3)$, with singular locus the line $V(x_0, x_1)$
and Hessian $x_0^2x_1^2$.}
\item  {the Chasles-Cayley surface
\begin{equation}\label{eq:chca}V\begin{vmatrix}0&x_0&x_1\\x_0&x_1&x_2\\x_1&x_2&x_3\end{vmatrix},\end{equation}
with singular locus the line $V(x_0, x_1)$ and Hessian $x_0^4$.}
\end{enumerate}

Both cases are obtained as hyperplane sections of the Perazzo hypersurface (for the Chasles–Cayley surface, note the equality $c_{02} = c_{11}$ in (\ref{eq:perazzo})).  
The remaining two orbits correspond to:
\begin{itemize}
\item a quadric surface with a tangent plane,
\item $V(x_0^2 x_1)$.
\end{itemize}

We conclude with the following:

\begin{proposition}
The (closures of the) $\mathtt{P}$-orbits on $\mathbbm{P}^4$ are
\[
\underbrace{\text{conic when (\ref{eq:chca}) has rank one}}_{\mathfrak{X}_1} \subset 
\underbrace{\mathbbm{P}^2 = V(c_{01},c_{11}) = \text{Sing}(\text{Perazzo})}_{\mathfrak{X}_2} \subset 
\underbrace{\text{Perazzo}}_{\mathfrak{X}_3} \subset \mathbbm{P}^4\,,
\]
while on the dual ${\mathbbm{P}^4}^\vee$ they are
\[
\underbrace{\mathbbm{P}^1 = \text{exceptional divisor}}_{\mathfrak{X}_2^\vee} \subset 
\underbrace{\text{surface cubic scroll}}_{\mathfrak{X}_3^\vee} \subset 
\underbrace{\text{quadric cone}}_{\mathfrak{X}_1^\vee} \subset {\mathbbm{P}^4}^\vee.
\]
It is interesting to note that the chain of varieties $\mathfrak{X}_i$ is \emph{not} inverted when passing to the dual $\mathfrak{X}_i^\vee$.
\end{proposition}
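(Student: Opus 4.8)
The plan is to determine the four $\mathtt{P}$-orbits on each side and then pass between them by projective duality. On $\mathbbm{P}^4$ the stratification is essentially in place: Proposition~\ref{prop:P4prehom} gives the dense orbit of smooth conics through $x$ with complement the Perazzo cubic $\mathfrak{X}_3$, and the remaining points correspond to two-line conics. First I would record that, in suitable coordinates, $\mathfrak{X}_2=\mathrm{Sing}(\mathfrak{X}_3)$ is the $\mathbbm{P}^2$ of conics supported on two lines through $x$, that $\mathfrak{X}_1$ is the conic of double lines through $x$ (the image of the pencil of lines through $x$ under the squaring map), and that these four orbits have dimensions $4,3,2,1$ with nested closures $\mathfrak{X}_1\subset\mathfrak{X}_2\subset\mathfrak{X}_3\subset\mathbbm{P}^4$. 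A key intrinsic fact I would isolate is that the conic $\mathfrak{X}_1$ spans exactly the plane $\mathfrak{X}_2$, i.e. $\langle\mathfrak{X}_1\rangle=\mathfrak{X}_2$ (by polarization $\ell\ell'=\tfrac12((\ell+\ell')^2-\ell^2-\ell'^2)$, the squares span all two-line conics).

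For the dual side I would use that the projective dual variety of a $\mathtt{P}$-invariant variety is again $\mathtt{P}$-invariant, so each $\mathfrak{X}_i^\vee:=(\overline{\mathfrak{X}_i})^\vee$ is a union of dual orbits; the hyperplane-section classification recalled above (the four cubic-surface section types of Perazzo) shows the action has exactly four orbits on $(\mathbbm{P}^4)^\vee$, of dimensions $4,3,2,1$, so these dual varieties are precisely the three proper orbit closures. Two of them are computed at once: since $\mathfrak{X}_2$ is a $2$-plane, $\mathfrak{X}_2^\vee$ is the $\mathbbm{P}^1$ of hyperplanes containing it; and since $\mathfrak{X}_1$ is a smooth conic spanning the plane $\mathfrak{X}_2$, its dual is the quadric cut out by the discriminant of the restricted binary quadric (in suitable coordinates $a_3^2=4a_2a_4$), which has rank $3$ and vertex exactly $\mathfrak{X}_2^\vee$. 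This already identifies $\mathfrak{X}_1^\vee$ as the quadric cone and $\mathfrak{X}_2^\vee$ as the $\mathbbm{P}^1$ (the vertex line, the exceptional divisor), and yields $\mathfrak{X}_2^\vee\subset\mathfrak{X}_1^\vee$.

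The remaining and most delicate computation is $\mathfrak{X}_3^\vee$, the dual of the Perazzo cubic. Here I would write down the Gauss (gradient) map of $\det C$ explicitly; since $\mathfrak{X}_3$ has vanishing Hessian it is dual-defective, so $\dim\mathfrak{X}_3^\vee\le 2$, and the gradient image is a surface ruled over the base conic $\cong\mathbbm{P}^1$. From the explicit gradient one reads off that the image satisfies the same equation $a_3^2=4a_2a_4$, giving $\mathfrak{X}_3^\vee\subset\mathfrak{X}_1^\vee$, and that the ruling has the vertex line $\mathfrak{X}_2^\vee$ as its directrix, giving $\mathfrak{X}_2^\vee\subset\mathfrak{X}_3^\vee$; checking that the scroll has degree $3$ identifies it as the cubic scroll. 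Assembling the inclusions produces $\mathfrak{X}_2^\vee\subset\mathfrak{X}_3^\vee\subset\mathfrak{X}_1^\vee$, and comparing with $\mathfrak{X}_1\subset\mathfrak{X}_2\subset\mathfrak{X}_3$ shows the index order $1,2,3$ is carried to $2,3,1$ rather than the reversed $3,2,1$, which is the asserted non-inversion.

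The hard part, I expect, is twofold. First, one must justify rigorously that the three projective dual varieties are exactly the dual orbit closures, i.e. that duality matches the two orbit posets as claimed; I would secure this through the hyperplane-section classification of Perazzo into the four listed cubic-surface types together with the dimension count $4,3,2,1$, since each proper dual orbit closure is then an irreducible invariant variety of a prescribed dimension and must coincide with the unique computed dual variety of that dimension. Second, the scroll identification for $\mathfrak{X}_3^\vee$ is the genuine obstacle: confirming it is the rational normal cubic scroll and, in particular, verifying the boundary inclusion $\mathfrak{X}_2^\vee\subset\mathfrak{X}_3^\vee$ in the closure requires the explicit Gauss-map parametrization (a pure dimension count is not enough), and this is where the computation must be carried out with care.
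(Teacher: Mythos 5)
Your argument is correct, and it is worth noting that the paper itself gives no formal proof of this proposition: it is stated as a conclusion of the preceding discussion, whose only justification is the classification of hyperplane sections of the Perazzo cubic into four types (the two irreducible cubic surfaces singular along a line, the quadric with a tangent plane, and $V(x_0^2x_1)$), which yields the count of four dual orbits. Your route keeps that orbit count as input but then actually \emph{identifies} the three proper dual orbit closures and verifies the inclusions, which the paper leaves implicit: the dual of the plane $\mathfrak{X}_2$ is the $\mathbbm{P}^1$ of hyperplanes through it, the dual of the conic $\mathfrak{X}_1$ spanning that plane is the rank-$3$ quadric whose vertex is exactly that $\mathbbm{P}^1$, and the dual of the Perazzo cubic is computed from the explicit gradient $(2x_0x_4+x_1x_3,\,x_0x_3+2x_1x_2,\,x_1^2,\,x_0x_1,\,x_0^2)$, whose image on $V(f)$ closes up to the determinantal cubic scroll cut out by the $2\times 2$ minors of a $2\times 3$ matrix of linear forms; that determinantal description at once gives $\mathfrak{X}_3^\vee\subset\mathfrak{X}_1^\vee$ (one of the minors is the quadric) and $\mathfrak{X}_2^\vee\subset\mathfrak{X}_3^\vee$ (the vertex line lies on the scroll), settling the non-inversion. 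You are also right to flag the two delicate points: matching the four section types to four orbits requires knowing that projectively equivalent sections are $\mathtt{P}$-conjugate (a point the paper also glosses over), and the closure inclusion $\mathfrak{X}_2^\vee\subset\mathfrak{X}_3^\vee$ genuinely needs the parametrization rather than a dimension count. In short, your proof is a more explicit and more complete version of the paper's sketch, at the cost of a coordinate computation for the Gauss map of the Perazzo hypersurface.
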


This technique can be extended to establish the following result (the Perazzo hypersurface corresponds to the case $m=3$).

\begin{proposition}\label{prop:van_hess}
Let $S_m$ be the $m\times m$ symmetric matrix with first entry $S_{0,0}=0$ and the remaining entries filled with $\binom{m+1}{2}-1$ indeterminates. For $m\geq 3$, the polynomial $\det(S_m)$ is concise, irreducible of degree $m$, and satisfies
\begin{equation}
\Hess\big(\det(S_m)\big) = 0\,.
\end{equation}
\end{proposition}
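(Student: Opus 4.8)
The plan is to exhibit an explicit algebraic dependence among the first partial derivatives of $\det(S_m)$, since by the characterization recalled in Section~\ref{subsec:hessian} the Hessian vanishes identically if and only if the partials $\{\partial_i \det(S_m)\}$ are algebraically dependent. Write the indeterminates as the entries $s_{ij}$ ($(i,j)\neq(0,0)$) of the symmetric matrix, and recall that the partial derivative of $\det(S_m)$ with respect to an off-diagonal entry $s_{ij}$ is (twice) the $(i,j)$-cofactor, while the derivative with respect to a diagonal entry $s_{ii}$ is the $(i,i)$-cofactor. Thus the partials of $\det(S_m)$ are, up to scalars, exactly the adjugate (cofactor) entries $\mathrm{adj}(S_m)_{ij}$, excluding the single $(0,0)$-cofactor which does not appear because $s_{00}=0$ is not an indeterminate.

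First I would make precise which cofactors occur. The gradient of $\det(S_m)$ is the vector of cofactors $C_{ij}$ for all pairs $(i,j)$ \emph{except} $(0,0)$. The key structural observation is the classical identity $S_m \cdot \mathrm{adj}(S_m) = \det(S_m)\, I$. Restricting attention to the top-left entry of this matrix product and using that the $(0,0)$-entry of $S_m$ is zero, one obtains a linear relation of the form $\sum_{k\geq 1} s_{0k}\, C_{k0} = \det(S_m)$, which already expresses $\det(S_m)$ itself through the partials; more usefully, comparing the adjugate entries reveals polynomial relations among the $C_{ij}$ coming from the rank-deficiency structure. The cleanest route is to recall that the $2\times 2$ minors of the adjugate matrix are proportional to $\det(S_m)$ times entries of $S_m$ (the classical Jacobi/Sylvester identity $\mathrm{adj}(S_m)_{ij}\,\mathrm{adj}(S_m)_{kl} - \mathrm{adj}(S_m)_{il}\,\mathrm{adj}(S_m)_{kj} = \det(S_m)\cdot(\text{complementary minor})$), and to exploit that the absent cofactor $C_{00}$ is, by expansion along the first row and using $s_{00}=0$, itself a polynomial expression in the \emph{other} cofactors and entries.

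The core of the argument, then, is to produce a nonzero polynomial $g$ with $g(\partial_0\det(S_m),\ldots)=0$. I expect the dependence to arise geometrically: the polar image (the closure of the image of the gradient map) of $V(\det(S_m))$ should be shown to have dimension strictly less than $\binom{m+1}{2}-1$, equivalently the cofactors $C_{ij}$ (for $(i,j)\neq(0,0)$) satisfy a nontrivial algebraic relation. Concretely, since $s_{00}=0$, the matrix $S_m$ is generically of corank zero but its adjugate satisfies the constraint coming from the vanishing top-left entry propagated through $\mathrm{adj}(\mathrm{adj}(S_m)) = \det(S_m)^{m-2} S_m$: the $(0,0)$-entry of $\mathrm{adj}(\mathrm{adj}(S_m))$ must equal $\det(S_m)^{m-2}\, s_{00} = 0$, which expresses a nonzero polynomial in the adjugate entries (i.e.\ in the partials) that vanishes identically. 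This is precisely the algebraic dependence sought. Conciseness follows because each variable $s_{ij}$ appears in $\det(S_m)$ (the corresponding cofactor is a nonzero polynomial for $m\geq 3$), and irreducibility follows from the standard fact that the determinant of a generic symmetric matrix is irreducible together with a specialization argument showing the single vanishing entry does not split it.

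The step I expect to be the main obstacle is verifying that the relation obtained from the $(0,0)$-entry of $\mathrm{adj}(\mathrm{adj}(S_m))$ is genuinely \emph{nonzero} as a polynomial in the partials---that is, that it is not a trivial identity---and confirming that $m\geq 3$ is exactly the threshold where this produces a bona fide dependence (for $m=2$ the adjugate is too small and $\det$ is still concise with nonvanishing Hessian). Establishing irreducibility for all $m\geq 3$ rigorously, rather than just conciseness, may also require care, since removing the $(0,0)$ variable is a nongeneric specialization of the symmetric determinant.
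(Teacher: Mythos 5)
Your core argument for the vanishing of the Hessian is correct, and it takes a genuinely different route from the paper's. The paper stays inside the prehomogeneous framework of Proposition~\ref{prop:P4prehom}: the parabolic $\mathtt{P}\subset\mathrm{SL}(m)$ fixing $x$ acts with a dense orbit on $\mathbbm{P}\big(H^0(I_x(2))\big)$, the ring of semi-invariants is generated by the irreducible $\det(S_m)$, so $\Hess(\det(S_m))$ must be $c\,\det(S_m)^k$; since its degree $\bigl(\binom{m+1}{2}-1\bigr)(m-2)$ is congruent to $2$ modulo $m$, hence not a multiple of $m$ for $m\geq 3$, one must have $c=0$. Your route instead exhibits the algebraic dependence among the partials explicitly: the partials are, up to factors of $2$, the cofactors $C_{ij}$ with $(i,j)\neq(0,0)$, and the $(0,0)$-entry of the identity $\mathrm{adj}(\mathrm{adj}(S_m))=\det(S_m)^{m-2}S_m$ reads $\det\bigl((C_{ij})_{1\leq i,j\leq m-1}\bigr)=\det(S_m)^{m-2}\,(S_m)_{00}=0$, i.e.\ the generic symmetric $(m-1)\times(m-1)$ determinant---a visibly nonzero polynomial---vanishes when evaluated at a subset of the partials. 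By the criterion recalled in Section~\ref{subsec:hessian} this forces $\Hess(\det(S_m))=0$. The paper's argument is shorter once the group action is set up and fits the theme of Section~\ref{sec.v}; yours is elementary, self-contained, and identifies the polar image concretely (it lands in the hypersurface of symmetric matrices whose leading principal $(m-1)$-minor vanishes), at the cost of not explaining \emph{why} such a structure is present.

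Three corrections. First, the step you single out as the main obstacle is not one: the relation is the determinant of a generic symmetric matrix of independent indeterminates, which is nonzero on its face, so no further verification of nontriviality is needed. Second, your parenthetical on $m=2$ is wrong: $\det(S_2)=-s_{01}^2$ is \emph{not} concise and its Hessian \emph{does} vanish (your own identity degenerates to $C_{11}=0$ there); the hypothesis $m\geq 3$ is the threshold for conciseness and irreducibility, not for the vanishing of the Hessian. Third, conciseness means linear independence of the partials, not merely that every variable occurs in $f$; you need the cofactors $C_{ij}$, $(i,j)\neq(0,0)$, to be linearly independent, which requires a further (short) argument, e.g.\ that $\mathrm{adj}(S_m)$ sweeps out the degree-$(m-1)$ hypersurface $\{\det(B_{\hat 0\hat 0})=0\}$ of symmetric matrices, which is contained in no hyperplane. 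The paper is equally terse on conciseness and irreducibility, so this last point does not put your proof behind its own, but it should be stated correctly.
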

\begin{proof}
Let $\mathtt{P} \subset \mathrm{SL}(m)$ be the parabolic subgroup of linear transformations that fix 
$x = \begin{pmatrix} 1 & 0 & \cdots & 0 \end{pmatrix}^{\rm T}$. The same argument as in the proof of Proposition \ref{prop:P4prehom} shows that $\mathtt{P}$ acts with a dense orbit on $\mathbbm{P}^{\binom{m+1}{2}-2} = \mathbbm{P}\big(H^0(I_x(2))\big)$. The ring of semi-invariants is generated by $\det(S_m)$, which has degree $m$.  
Its Hessian is a semi-invariant which has  degree $\bigl(\binom{m+1}{2}-1\bigr)(m-2)$, which is not a multiple of $m$ for $m \geq 3$; indeed, it has remainder $2$ modulo $m$.
Hence the Hessian vanishes.
\end{proof}

\begin{remark}
In \cite[Example 4.3]{CNOS} it was computed a $5$-dimensional group which leaves invariant the Perazzo hypersurface, which had been interpreted as a Taylor variety. That group is a subgroup of the $6$-dimensional parabolic group $\mathtt{P}$, so that Propositions \ref{prop:P4prehom} and \ref{prop:van_hess} can be seen as a completion to \cite[Example 4.3]{CNOS}.
See also \cite[Remark 2.21]{COCDR}.
\end{remark}

\subsection{Some Persistent Polynomials are Semi-Invariants and Homaloidal}

In \cite{CRS}, a special class of examples, analogous to those in Proposition \ref{prop:construct}, was constructed using weighted determinants. These examples further exhibit the property of having a comparatively large isotropy group relative to a generic polynomial on $\mathbbm{C}^d$ of weight $d-1$.

As in Ref. \cite[\S 4.1.1]{CRS}, we define the generic sub-Hankel matrix ${\rm M}_{(d)}=\bigl(x_{\,i+j-d+1} \bigr)_{1\leq i,j \leq d-1}$ where we set $x_{k}=0 $ for $k\notin[0,d-1]$. Explicitly, one has
\begin{equation}
\renewcommand{\arraystretch}{1.5}
{\rm M}_{(d)}=
\left(\hspace{-1mm}\begin{NiceMatrix}[columns-width=1.7em]
0     &\Cdots &\Cdots & 0     & x_0   & x_1    \\
\Vdots&       &\Iddots& x_0   & x_1   & x_2    \\
\Vdots&\Iddots&\Iddots&\Iddots &\Iddots &\vdots  \\
0     & x_0   &\Iddots&\Iddots&\iddots&x_{d-3} \\
x_0   & x_1   &\Iddots&\iddots&x_{d-3}&x_{d-2} \\
x_1   & x_2   &\cdots &x_{d-3}&x_{d-2}&x_{d-1} \\
\end{NiceMatrix}\right).
\end{equation}

We then define the associated determinant
\begin{equation}
f_{(d)} = \det {\rm M}_{(d)}\,.
\end{equation}
The polynomial $f_{(d)}$ has both weight $d-1$ and degree $d-1$. Hence, by Proposition~\ref{prop:construct}, every $f_{(d)}$ with $d \geq 3$ is a persistent polynomial.  
Moreover, for $d \geq 3$ and $k \geq 0$, any reducible polynomial of the form $f_{(d)} x_0^k$ is also persistent by the same reasoning.

Note that $f_{(4)}$ corresponds precisely to the Chasles-Cayley surface in (\ref{eq:chca}). The polynomial $f_{(4)}$ admits a $3$-dimensional isotropy group given by
\begin{equation}\label{eq:P4}
\mathtt{P}_{(4)}=\left\{\begin{pmatrix}
1&a_1&a_2\\
0&a_0&2a_0a_1\\
0&0&a_0^2
\end{pmatrix} \Big|~a_0\neq 0\right\}.
\end{equation}
The group $\mathtt{P}_{(4)}$ is a subgroup of the parabolic group $\mathtt{P}$ in (\ref{eq:parabolicP}) and acts on $\mathbbm{P}^3$ by congruence, in the sense that
for any $p\in \mathtt{P}_{(4)}$, representing $(x_0,\ldots, x_3)$ by the entries of ${\rm M}_{(4)}$, the action is 
$p^{\rm T}{\rm M}_{(4)}p$. In other words, the restriction of the action of $\mathtt{P}$ on $\mathbbm{P}^4$ to the subgroup $\mathtt{P}_{(4)}$ leaves invariant the hyperplane corresponding to our $\mathbbm{P}^3$.
$\mathbbm{P}^3$ is a prehomogeneous space for this $\mathtt{P}_{(4)}$-action, with a dense orbit given by
$\mathbbm{P}^3\setminus V(x_0f_{(4)})$.
The ring of semi-invariants is generated by 
$x_0$ and $f_{(4)}$. We just observed that $x_0f_{(4)}$ is persistent
by Proposition \ref{prop:construct}. By \cite[Lemma 1]{Dol} the semi-invariant $x_0f_{(4)}$ is also homaloidal, indeed the extra assumption of regularity needed in \cite{Dol} is always satisfied in our cases, since for persistent polynomial the Hessian does not vanish by condition $(\textnormal{d})$ of Theorem \ref{thm:main}.
Indeed any power $f_{(4)}^kx_0^h$
is homaloidal for $k\geq 1$, $h\geq 0$, since by
the results by Dimca and Papadima in \cite{DP} the degree of the polar map $\nabla f$ is equal to the degree of $\nabla f_{\rm red}$ where $f_{\rm red}$ is reduced, with the same irreducible factors as $f$.

\begin{proposition}\label{prop:f4khess}
For all integers $k \geq 1$ and $h \geq 0$, one has
\begin{equation}
\Hess\big(f_{(4)}^k x_0^h\big)\,=\,\lambda \, f_{(4)}^{\,4(k-1)} x_0^{\,4(h+1)}\,, \quad \text{for some}~~\lambda \in \mathbbm{C}^*.
\end{equation}
\end{proposition}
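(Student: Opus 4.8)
The plan is to exploit the prehomogeneous structure established just above the statement. First I would observe that $f_{(4)}^{k}x_0^{h}$ is a polynomial semi-invariant for the $\mathtt{P}_{(4)}$-action, being a product of powers of the two generators $x_0$ and $f_{(4)}$ of the ring of semi-invariants on the prehomogeneous space $\mathbbm{P}^3$. Since the Hessian of a semi-invariant is again a semi-invariant, and since on $\mathbbm{P}^3$ every polynomial semi-invariant is, up to a scalar, a monomial in the irreducible semi-invariants $x_0$ and $f_{(4)}$, I would conclude immediately that
\[
\Hess\big(f_{(4)}^{k}x_0^{h}\big)=\lambda\, x_0^{\,a} f_{(4)}^{\,b}
\]
for some $\lambda\in\mathbbm{C}$ and nonnegative integers $a,b$. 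The whole problem then reduces to pinning down the exponents $a,b$ and verifying $\lambda\neq 0$.

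To determine the exponents I would impose two numerical constraints. The degree constraint: writing $n=3k+h$ and $d=4$, the Hessian has degree $d(n-2)=12k+4h-8$, whereas $x_0^{a}f_{(4)}^{b}$ has degree $a+3b$, so $a+3b=12k+4h-8$. The weight constraint comes from Lemma~\ref{lem:weight}: since $x_0$ has weight $0$ and $f_{(4)}$ has weight $d-1=3$, the polynomial $f_{(4)}^{k}x_0^{h}$ is isobaric of weight $\delta=3k$, hence its Hessian is isobaric of weight $d(\delta-d+1)=4(3k-3)=12(k-1)$; comparing this with the weight $3b$ of $x_0^{a}f_{(4)}^{b}$ forces $b=4(k-1)$. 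Substituting back into the degree relation yields $a=4(h+1)$, which is exactly the asserted factorization.

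It remains to check $\lambda\in\mathbbm{C}^{*}$, that is, that the Hessian does not vanish identically. Here I would invoke the fact, recorded in the paragraph preceding the statement, that $f_{(4)}^{k}x_0^{h}$ is homaloidal for all $k\geq 1$ and $h\geq 0$: since the polar map is then birational, in particular dominant, its Jacobian---which is precisely $\Hess(f_{(4)}^{k}x_0^{h})$---cannot be identically zero, so $\lambda\neq 0$. (Note that for $k\geq 2$ one cannot instead appeal to condition~(\textnormal{d}) of Theorem~\ref{thm:main}, since $f_{(4)}^{k}$ has a repeated factor and need not be persistent, which is why the homaloidal route is essential.) I expect the only genuinely delicate point to be the opening step, namely the claim that a polynomial semi-invariant on $\mathbbm{P}^3$ is forced to be a monomial in $x_0$ and $f_{(4)}$; this rests on the prehomogeneity of the $\mathtt{P}_{(4)}$-action with dense orbit $\mathbbm{P}^3\setminus V(x_0 f_{(4)})$ together with unique factorization of semi-invariants into irreducible ones, after which the determination of $a,b$ and the nonvanishing of $\lambda$ are purely formal.
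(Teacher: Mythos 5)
Your proof is correct, and while it opens exactly as the paper does (prehomogeneity of the $\mathtt{P}_{(4)}$-action forces $\Hess(f_{(4)}^{k}x_0^{h})$ to be a scalar times a monomial in the two irreducible semi-invariants $x_0$ and $f_{(4)}$, with the exponents constrained by the degree count $a+3b=4(3k+h-2)$), it diverges in how the second constraint and the nonvanishing are obtained. The paper pins down the exponent of $f_{(4)}$ by a direct computation: it records the maximal $x_3$-degree of each entry of the Hessian matrix and observes that the diagonal product realizes the maximum $4(k-1)$, which simultaneously yields $\alpha=4(k-1)$ and (implicitly, granting no cancellation of that leading term) the nonvanishing of $\lambda$. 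You instead invoke Lemma~\ref{lem:weight}: since $f_{(4)}$ is isobaric of weight $3$ and $x_0$ of weight $0$, the Hessian must be isobaric of weight $12(k-1)$, forcing $b=4(k-1)$ and then $a=4(h+1)$ from the degree relation --- a cleaner, computation-free determination of the exponents. The price is that you need a separate argument for $\lambda\neq 0$, which you correctly supply from the homaloidal property of $f_{(4)}^{k}x_0^{h}$ established (via Dimca--Papadima) in the paragraph preceding the statement; your parenthetical observation that condition (d) of Theorem~\ref{thm:main} is unavailable for $k\geq 2$ is also accurate. In short: your weight argument buys conceptual transparency and avoids the entrywise degree table, while the paper's explicit computation buys self-containedness (it does not lean on the homaloidal discussion for nonvanishing). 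Both are valid.
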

\begin{proof}
Since $\mathtt{P}_{(4)}$ acts on $\mathbbm{P}^3$
with a dense orbit with complement $V(f_{(4)}x_0)$, we get that the semi-invariant ring is generated by $f_{(4)}$ and by $x_0$.
Hence $\Hess(f_{(4)}^kx_0^h)$ is a scalar multiple of $f_{(4)}^\alpha x_0^\beta$ for some $\alpha$, $\beta$ such that
$3\alpha+\beta=4(3k+h-2)$ by degree reasons.
It is enough to compute $\alpha$ and for this purpose it is enough to detect the appearance of
the variable $x_3$ in the Hessian matrix, since
it appears in the monomial $x_0^2x_3$ of $f_{(4)}$. Computing the second derivatives, the maximum exponents of $x_3$ in each entry are the following
\begin{equation}
\begin{pmatrix}
k&k-1&k-1&k-1\\
k-1&k-1&k-1&k-2\\
k-1&k-1&k-1&k-2\\
k-1&k-2&k-2&k-2\end{pmatrix}.
\end{equation}
In the diagonal there is the maximum appearance at exponent $4(k-1)$, hence $\alpha=4(k-1)$, as we wanted.
\end{proof}
The formula in Proposition \ref{prop:f4khess} reminds Proposition 2.7 in \cite{COCDR} and confirms, by condition $(\textnormal{d})$ of Theorem \ref{thm:main}, that $f_{(4)}^kx_0^h$ is persistent if and only if $k=1$ and $h\geq 0$.

The dual variety to the Chasles-Cayley cubic $V(f_{(4)})$ is isomorphic to $V(f_{(4)})$ itself. This is well known, see for example \cite[Proposition 1.7]{CRS}. An alternative proof
uses the fact that $f_{(4)}$ is an invariant for the action of $\mathtt{P}_{(4)}$ defined in (\ref{eq:P4}). In \cite{ST}, using results of \cite{DP, Huh}, it is proved that the only irreducible homaloidal cubic surface is the Chasles-Cayley surface, see Remark\ref{rem:ST}.

\begin{remark}
Examples suggest that the dual of a irreducible persistent polynomial of degree $d-1$ and weight $d-1$ in $\mathbbm{P}^{d-1}$ ($d\geq 3$)
is $\mathrm{SL}(d)$-equivalent to another irreducible persistent polynomial of degree $d-1$ and weight $d-1$ in $\mathbbm{P}^{d-1}$, so there is likely an involution in the $\mathrm{SL}(d)$-quotient. It would be interesting to check if this hypothesis is true.
\end{remark}

The polynomial $f_{(5)}$ has a $4$-dimensional isotropy group given by
\begin{equation}
\mathtt{P}_{(5)}=\left\{
\begin{pmatrix}
1&a_{1}&a_{2}&a_{3}\\
0&a_{0}&2\,a_{0}a_{1}&a_{0}a_{1}^{2}+2\,a_{0}a_{2}\\
0&0&a_{0}^{2}&3\,a_{0}^{2}a_{1}\\
0&0&0&a_{0}^{3}
\end{pmatrix} \Big|~a_0\neq 0\right\}.
\end{equation}
The action of $\mathtt{P}_{(5)}$ on $\mathbbm{P}^4$ is completely analogous to the action of $\mathtt{P}_{(4)}$ on $\mathbbm{P}^3$ described after (\ref{eq:P4}) and we do not repeat it. $\mathbbm{P}^4$ is a prehomogeneous space for this action with a regular invariant given by $x_0f_{(5)}$. By \cite[Lemma~1]{Dol}, this invariant is homaloidal. Moreover, by Proposition~\ref{prop:construct}, it is also persistent.  
It is interesting to compute
\begin{equation}
\Hess\big(f_{(5)}^k x_0^h\big)\,=\,\lambda\, f_{(5)}^{\,5(k-1)} x_0^{\,5(h+2)}\,, \quad \text{for some}~~\lambda\in\mathbbm{C}^*, \;\; \forall\, k \geq 1, \, h \geq 0\,.
\end{equation}
We omit the proof, as it is analogous to Proposition~\ref{prop:f4khess}.  
Once again, $f_{(5)}^k x_0^h$ is persistent if and only if $k=1$ and $h \geq 0$.


It seems reasonable to conjecture that symmetric persistent tensors are homaloidal; we have checked this property in all the examples we know. The fact that $f_{(d)}$ are homaloidal is proved in \cite[Theorem 4.4]{CRS}. The converse implication is not true by the example $f=x_0x_1x_2$, which is homaloidal but not persistent.
A weaker version of the question is the following. If the Hessian of a homogeneous polynomial is a power of a linear form, does it follow that the polynomial is homaloidal?  This question should be compared with the question posed by Ciliberto, Russo and Simis in \cite[Remark 3.5]{CRS}.

\subsection{Multiplicative Legendre Transform }

We follow closely Refs. \cite{Dol, EKP} for the preliminaries. Let $V$ be a complex vector space of dimension $d$.
Given a sufficiently regular homogeneous function $f$ on $V$,
we denote $d \ln f=\frac{\nabla f}{f}$. When $\Hess(\ln f)\neq 0$ there exists  a function $f_*$ on $V^\vee$ such that $f_*(d\ln f)=\frac{1}{f}$ (see Ref. \cite[Lemma 1]{Dol}).
$f_*$ is called the {\it multiplicative Legendre transform of $f$}.
\begin{theorem} \cite{Dol, EKP}
Let $f$ be a polynomial on $V$ with $\Hess(\ln f)\neq 0$. Then $f$ is homaloidal if and only if $f_*$ is rational.
\end{theorem}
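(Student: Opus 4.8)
The plan is to reduce both halves of the equivalence to the birationality of a single affine map, the logarithmic gradient $\phi := d\ln f = \nabla f/f \colon V \dashrightarrow V^\vee$. First I would record that, since $f$ is homogeneous of degree $n$, the numerator $\nabla f$ has degree $n-1$ while $f$ has degree $n$, so $\phi$ is homogeneous of degree $-1$, that is $\phi(\lambda x)=\lambda^{-1}\phi(x)$. Projectively $[\phi]=[\nabla f]$ is exactly the polar map, so $f$ is homaloidal precisely when $[\phi]$ is birational. The degree $-1$ homogeneity is what lets me pass from projective to affine birationality: given general $y$, the projective map pins down the line $\ell\ni x$, and along $\ell$ the scaling $\phi(tx_0)=t^{-1}\phi(x_0)$ selects a unique representative, so $[\phi]$ birational forces $\phi$ itself to be birational, it being dominant because $\Hess(\ln f)\neq0$. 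Hence the target reduces to the statement that $\phi$ is birational if and only if $f_*$ is rational.

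The key identity I would establish is that the logarithmic gradient of $f_*$ is the inverse of $\phi$, namely $d\ln f_* = \phi^{-1}$. Setting $g=\ln f$ and $y=\nabla_x g(x)$, Euler's relation gives the crucial simplification $\langle x, y\rangle = \tfrac{1}{f}\sum_i x_i\,\partial_i f = n$, a constant. Therefore the additive Legendre transform is $g^*(y)=\langle x,y\rangle - g(x)=n-\ln f(x)=n+\ln f_*(y)$, using $f_*(y)=1/f(x)$. Differentiating and invoking the standard Legendre duality $\nabla_y g^*(y)=x$ yields $d\ln f_*(y)=\nabla_y\ln f_*(y)=x=\phi^{-1}(y)$, as claimed. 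This computation also re-proves, en passant, that the multiplicative Legendre transform is an involution. All the manipulations are legitimate precisely because $\Hess(\ln f)\neq0$ makes $y=\nabla g$ locally invertible.

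With the identity in hand both implications are short. If $f$ is homaloidal, then $\phi$ is birational with rational inverse $\psi=\phi^{-1}$, and $f_*(y)=1/f(\psi(y))$ is a reciprocal of a rational function, hence rational. Conversely, if $f_*$ is rational then so is $d\ln f_*=\nabla f_*/f_*$; but $d\ln f_*=\phi^{-1}$, so the a priori merely analytic inverse of $\phi$ is in fact a rational map, which means $\phi$ is birational and therefore $f$ is homaloidal.

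The main obstacle I anticipate is the bookkeeping at the interface between the analytic and algebraic categories. The transform $f_*$ is produced by Dolgachev's lemma only as a germ of an analytic function on $V^\vee$, so the content of the statement is exactly when this germ is the restriction of a rational function; I must ensure that the identity $d\ln f_*=\phi^{-1}$ is an equality of germs that upgrades correctly to an equality of rational maps once $f_*$ is known to be rational, and that \emph{rational inverse} is synonymous with \emph{birational} for the dominant map $\phi$. The homogeneity step linking affine and projective birationality must likewise be stated with care, since it is there that the degree $-1$ scaling of $\phi$ is essential and where a naive projective argument would discard the affine information needed to reconstruct $f_*$.
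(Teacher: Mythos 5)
Your proposal is correct, but note that the paper itself offers no proof of this statement: it is imported verbatim from Dolgachev and Etingof--Kazhdan--Polishchuk, and the displayed identity $\left(\frac{\nabla f}{f}\right)^{-1}=\frac{\nabla f_*}{f_*}$ that immediately follows the theorem in the text is exactly the key identity you derive via the Euler relation $\langle x,\nabla\ln f(x)\rangle=n$ and additive Legendre duality. So your argument is essentially a reconstruction of the standard proof from the cited sources rather than an alternative to anything in the paper; the two points you flag yourself (the degree $-1$ homogeneity needed to pass between projective birationality of the polar map and affine birationality of $\nabla f/f$, and the upgrade of the germ identity $d\ln f_*=\phi^{-1}$ to an identity of rational maps once $f_*$ is known to be rational) are precisely the places where care is required, and your treatment of both is sound, given the standing assumption (implicit in the paper) that $f$ is homogeneous.
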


When $f$ is homaloidal, $f_*$ gives the inverse of the polar map, in the sense that
\begin{equation}
\left(\frac{\nabla f}{f}\right)^{-1} = \frac{\nabla f_*}{f_*}\,.
\end{equation}

Note that $f_{(3)}$ is a smooth conic and its Legendre transform is projectively equivalent to itself.
Dolgachev computes for our persistent example $f=f_{(3)}=(x_0x_2+x_1^2)x_0$, 
$f_*=\frac{(4x_0x_2+x_1^2)^2}{x_2}$ (we keep the same sign as in \cite{Dol}, which of course does not influence the result).
Note the conic in the numerator of $f_*$ is the dual conic of $(x_0x_2+x_1^2)$, and all the unions of a smooth conic with a tangent line are projectively equivalent. This computation may be summarized saying that $\left(f_{3}\right)_*$
is projectively equivalent to $\frac{f_{(3)}^2}{x_0^3}$, a rational function but not a polynomial.

We compute now a similar pattern for the Legendre transform of our favorite persistent polynomials.

\begin{proposition}\label{prop:legendre}
The Legendre transform $\left(f_{(4)}\right)_*$ is projectively equivalent to $\frac{f_{(4)}^2}{x_0^3}$.

Moreover the Legendre transform $\left(f_{(4)}x_0\right)_*$ is projectively equivalent to $\frac{f_{(4)}^3}{x_0^5}$. In both cases we get a rational function, but not a polynomial.
\end{proposition}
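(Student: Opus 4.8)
The plan is to compute $\left(f_{(4)}\right)_*$ and $\left(f_{(4)}x_0\right)_*$ directly from the defining relation $f_*(d\ln f)=\tfrac{1}{f}$, exploiting the $\mathtt{P}_{(4)}$-semi-invariance to pin down the answer up to a scalar. First I would recall that for a homogeneous $f$ of degree $m$ on $V$, the polar map $\nabla f$ is homogeneous of degree $m-1$, so $d\ln f=\nabla f/f$ has degree $-1$; when $f$ is homaloidal this map is birational, and the inverse is given by $\nabla f_*/f_*$, as stated in the excerpt. Since $f_{(4)}$ is an irreducible persistent cubic it is homaloidal by the theorem of Mammana proved above, and $f_{(4)}x_0$ is homaloidal by the Dimca--Papadima argument already invoked in the text (the degree of the polar map depends only on $f_{\mathrm{red}}$). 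Thus both Legendre transforms are guaranteed to be \emph{rational}, and the only task is to identify them.

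The key computational step is to write down $\nabla f_{(4)}$ explicitly as four quadrics $q_0,\dots,q_3$ in $(x_0,\dots,x_3)$ using the sub-Hankel structure of $\mathrm{M}_{(4)}$, and then to invert the resulting Cremona transformation of $\mathbbm{P}^3$. I would look for a semi-invariant ansatz for $f_{(4)}^{\phantom{}}{}_*$: because $f_{(4)}$ is a $\mathtt{P}_{(4)}$-semi-invariant and the Legendre transform intertwines the $\mathtt{P}_{(4)}$-action on $V$ with the contragredient action on $V^\vee$, the transform $\left(f_{(4)}\right)_*$ must itself be a semi-invariant rational function built from $f_{(4)}$ and $x_0$ (the two generators of the semi-invariant ring). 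Matching the homogeneity degree fixes the exponents: a degree count using $\deg f_{(4)}=3$ forces the combination $f_{(4)}^2/x_0^3$, which has the correct total degree $6-3=3$ and the correct weight. The analogous count for $f_{(4)}x_0$ (degree $4$) yields $f_{(4)}^3/x_0^5$. I would verify the scalar and the precise exponents by substituting the gradient map into $f_*(\nabla f/f)=1/f$ and checking the identity, which reduces to a finite polynomial identity after clearing denominators.

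The main obstacle I expect is the explicit inversion of the polar map, i.e.\ solving $y_i = q_i(x)/f_{(4)}(x)$ for $x$ in terms of the dual coordinates $y$; this is where the sub-Hankel combinatorics must be handled carefully, since the quadrics $q_i$ are not independent in an obvious way and one must locate the base locus (supported on $V(f_{(4)})$ together with the singular line $V(x_0,x_1)$) to confirm birationality and to read off the inverse. A cleaner route, which I would pursue in parallel, is to avoid inverting the map altogether: once the ansatz $\left(f_{(4)}\right)_*=c\,f_{(4)}^2/x_0^3$ is posited, it suffices to verify the single functional equation $f_*(\nabla f/f)\cdot f = 1$ as an identity of rational functions, together with the normalization from the $f_{(3)}$ case (where Dolgachev's computation $\left(f_{(3)}\right)_*\sim f_{(3)}^2/x_0^3$ already exhibits the same pattern and provides a sanity check). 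Establishing ``projectively equivalent'' rather than ``equal'' gives the freedom to absorb any nonzero scalar and any linear change of dual coordinates, so the final verification is only up to this equivalence; the statement that the result is rational but not polynomial then follows immediately from the presence of the denominator $x_0^{3}$ (resp.\ $x_0^{5}$), which cannot be cancelled since $x_0\nmid f_{(4)}$.
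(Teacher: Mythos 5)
Your route differs from the paper's. The paper's proof is an explicit symbolic computation in Macaulay2: it forms the ideal generated by the bilinear expressions $y_if-\partial_i f$, saturates with respect to $f$, obtains a nonsingular system that expresses $x$ as rational functions of $y$, and then reads off $f_*=1/f(x(y))$ and recognizes it as $\frac{f_{(4)}^2}{x_0^3}$ up to projectivity. You instead posit a semi-invariant ansatz on the dual space and propose to verify the single functional equation $f_*(\nabla f/f)\cdot f=1$. This is a legitimate alternative that avoids inverting the Cremona map outright, and the semi-invariance framing fits the surrounding section well: the dual $\mathbbm{P}^3$ is again prehomogeneous for $\mathtt{P}_{(4)}$, so $f_*$ must indeed be a ratio of powers of the two dual semi-invariants.

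Two caveats. First, your claim that ``matching the homogeneity degree fixes the exponents'' is not correct as stated: $f_*$ is homogeneous of degree $3$, but so is every $\tilde f^{\,a}\,\tilde\ell^{\,3-3a}$, where $\tilde f$ and $\tilde\ell$ are the dual semi-invariants of degrees $3$ and $1$; degree alone allows $(a,3-3a)=(1,0),(0,3),(2,-3),\dots$. You need the character (weight) computation you only allude to, or you must let the final verification select the exponents. Second, and more substantively, a functional equation cannot be checked against a projective-equivalence class: to clear denominators and verify a polynomial identity you must first produce explicit representatives $\tilde f$ and $\tilde\ell$ in the dual coordinates $y$ (the equation of the dual Chasles--Cayley cubic and the correct point of $V$ giving $\tilde\ell$). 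Identifying these is essentially the computation the paper delegates to Macaulay2, so your plan repackages, rather than bypasses, the computational core. With those two items supplied the argument goes through, including the closing observation that $\tilde\ell\nmid\tilde f$ (the dual cubic being irreducible), so the transform is rational but not polynomial.
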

\begin{proof}
The computations, on the lines of Example 5 in Ref. \cite{Dol}, are nontrivial. We took advantage of the software Macaulay2~\cite{GS}. The key commands are the following
\begin{verbatim}
    R=QQ[x_0..x_3,y_0..y_3]
    x1=matrix{{x_0..x_3}},y1=matrix{{y_0..y_3}}
    f=det matrix{{0,x_0,x_1},{x_0,x_1,x_2},{x_1,x_2,x_3}}
    J=ideal matrix {apply(4,i->(y_i*f-diff(x_i,f)))}
    J1=saturate(J,ideal(f))
\end{verbatim}

We get a system of four bilinear equations in $x,y$, equal to some scalars, which is nonsingular and can be solved. We started with $y_i$ as rational functions of $x_j$, we end with $x_j$ as rational functions of $y_i$. This computation looks similar to Ref. \cite[Proposition 4.5]{CRS}.
\end{proof}

It is likely that the pattern of Proposition \ref{prop:legendre} continues for the higher persistent polynomials $f_{(k)}$ and we leave it as an open problem.

A beautiful reference on the connection of homaloidal polynomials with prehomogeneous spaces is Ref. \cite{CS}. They analyze, as in \cite{EKP}, the cases when the Legendre transform is still a polynomial, expected to be very few.

\section*{Acknowledgments}
The second author is member of Italian GNSAGA and it is supported by the European Union - Next Generation EU, M4C1, CUP B53D23009560006, PRIN 2022- 2022ZRRL4C - Multilinear Algebraic Geometry, and by the HORIZON-MSCA-2023/Doctoral Network/Joint Doctorate call
TENORS, Grant agreement 101120296. The second author thanks Ciro Ciliberto and Francesco Russo for helpful discussions and for pointing \S 4 of Ref. \cite{CRS}
and Remark \ref{rem:bsegre}. 



\end{document}